\theoremstyle{plain}
\newtheorem{theor}{Theorem}
\newtheorem{lem}[theor]{Lemma}
\newtheorem{prop}[theor]{Proposition}
\newtheorem{cor}[theor]{Corollary}
\theoremstyle{definition}
\newtheorem{dfn}[theor]{Definition}
\newtheorem{example}[theor]{Example}
\newtheorem{exe}[]{Exercise}
\newtheorem{rem}[theor]{Remark}
\def\qed {{
   \parfillskip=0pt        
   \widowpenalty=10000     
   \displaywidowpenalty=10000  
   \finalhyphendemerits=0  
  %
   \leavevmode             
   \unskip                 
   \nobreak                
   \hfil                   
   \penalty50              
   \hskip.2em              
   \null                   
   \hfill                  
   $\square$
  
   \par}}                  
\newcommand{\thmref}[1]{Theorem~\ref{#1}}
 \newcommand{\corref}[1]{Corollary~\ref{#1}}
  \newcommand{\propref}[1]{Proposition~\ref{#1}}
\newcommand{\figref}[1]{Figure~\ref{#1}}
\def\XXint#1#2#3{{\setbox0=\hbox{$#1{#2#3}{\int}$ }
\vcenter{\hbox{$#2#3$ }}\kern-.6\wd0}}
\def\tr{\mathop{\rm tr}}
\def\div{\mathop{\rm div}}
\newcommand{\ds}{\displaystyle}
\newcommand{\dl}{{\delta}}
\newcommand{\K}{\mathbb{K}}
\newcommand{\bee}{\begin{equation*}}
\newcommand{\eee}{\end{equation*}}
\newcommand{\be}{\begin{equation}}
\newcommand{\ee}{\end{equation}}
\newcommand{\baa}{\begin{align*}}
\newcommand{\eaa}{\end{align*}}
\newcommand{\ba}{\begin{align}}
\newcommand{\ea}{\end{align}}
\DeclareMathOperator*{\esssup}{ess\,sup\,}
\DeclareMathOperator*{\essinf}{ess\,inf\,}
\newcommand{\eps}{\varepsilon}
\newcommand{\B}{\mathcal{B}_\Omega}
\newcommand{\na}{\mathbb{N}}
\newcommand{\re}{\mathbb{R}}
\newcommand{\rn}{\mathbb{R}^n}
\newcommand{\R}{{\mathbb R}}
\newcommand{\Rn}{{\mathbb{R}}^{n}}
\begin{document}

\keywords{Muckenhoupt weights, reverse-H\"older classes,
Harnack's inequality}

\thanks{Second and third authors were partially supported by the NSF under grant DMS 0901587.}

\address{Sapto Indratno, Bandung Institute of Technology, Statistics Research Group, Ganesha 10, Bandung-40132, Indonesia.} \email{sapto@math.itb.ac.id}

\address{Diego Maldonado, Kansas State University, Department of Mathematics. 138 Cardwell Hall,
Manhattan, KS-66506, USA.} \email{dmaldona@math.ksu.edu}

\address{Sharad Silwal, Northland College, 1411 Ellis Avenue, Ashland, WI 54806, USA.} \email{ssilwal@northland.edu}

\title[A visual formalism for reverse classes]{A visual formalism for weights satisfying reverse inequalities}
\author{Sapto Indratno, Diego Maldonado, and Sharad Silwal}

\date{\today}

\maketitle

\date{\today}

\begin{abstract}
In this expository article we introduce a diagrammatic scheme to represent reverse classes of weights and some of their properties. 
\end{abstract}

\section{Introduction}\label{secintro}

Let $B\subset\Rn$ be a Euclidean ball and let $|B|$ denote its
Lebesgue measure. Given a function $w>0$ defined on $B$, H\"older's (or Jensen's)
inequality implies that for any  $-\infty\leq p \leq q\leq \infty$ we have
\be\label{natural} \left(\frac{1}{|B|}\int_B
w^p(x)\,dx \right)^{1/p}\leq \left(\frac{1}{|B|}\int_B w^q(x)
\,dx\right)^{1/q},
\ee 
finite or infinite (see Definition \ref{rmean} below for the meaning of these integrals when $p$ or $q$ equal $0, -\infty$, or $\infty$). It is a remarkable fact that for large families of weights $w$ one or more of the natural
inequalities in \eqref{natural} can be reversed, up to a
multiplicative constant, uniformly over vast collections of balls. 

The purpose
of this expository article is to formalize the study of such weights through a
diagrammatic scheme.  We found this visual scheme to be of pedagogical value when teaching or explaining Muckenhoupt weights as well as some topics on elliptic PDEs such as Harnack's inequality and
Moser's iterations. It also serves as a mnemonic device for
remembering various results relating weights in reverse classes and as a tool for posing
questions and conjectures for such weights.

As part of the exposition we will review a number of results on the theory of Muckenhoupt weights; however, we will make no attempt at accounting for the history of weights satisfying reverse inequalities; readers are encouraged to consult, for instance, \cite[Chapter 7]{Duo}, \cite[Chapter IV]{GCRdF}, \cite[Chapter 9]{Grafakos}, \cite[Chapter 2]{Jou83}, \cite[Chapter 5]{St}, \cite[Chapters I and II]{StrTor}, \cite[Chapter IX]{Tor}.

Since inequalities such as \eqref{natural} require only the notions of `ball' and `integral', we set up the rest of the article in the
context of spaces of homogeneous type, also known as doubling quasi-metric spaces.

\begin{dfn} A \emph{quasi-metric} space is a pair $(X,d)$ where X is a
non-empty set and $d$ is a \emph{quasi distance} on $X$, that is, $d : X
\times X \to [0,\infty)$ such that
\begin{enumerate}[(i)]
\item $d(x,y)=d(y,x)$ for all $x, y \in X$,
\item $d(x,y)=0$ if and only if $x=y$, and
\item there exists $K\geq 1$ (quasi-triangle constant), such that
\begin{equation}\label{quasitrianK}
d(x,y)\leq K (d(x,z)+d(z,y)) \quad \forall x, y, z \in X.
\end{equation}
\end{enumerate}
Let $(X,d)$ be a quasi-metric space. The $d$-ball with center $x\in
X$ and radius $r>0$ in $(X,d)$ is defined by 
$$
B_r(x):=\{y\in X: d(x,y)<r\}. 
$$
For $B:=B_r(x)$ and $\lambda > 0$, $\lambda B$
denotes the ball $B_{\lambda r}(x)$. 

Let $\mu$ be a measure defined
on the balls of $X$. The triple $(X,d,\mu)$ is said to be a \emph{space of
homogeneous type} (as introduced by Coifman and Weiss \cite[Chapter III]{CW71}) 
if $(X,d)$ is a quasi-metric space and
$\mu$ satisfies the \emph{doubling property}, that is, if there
exists a positive constant $C_\mu>1$ such that 
\begin{equation}\label{mudoub}
0<\mu(B_{2r}(x))\leq C_\mu \, \mu(B_r(x)) \quad \forall x\in X, r>0. 
\end{equation}
Any constant depending only on the quasi-triangle constant $K$ in \eqref{quasitrianK} and the doubling constant $C_\mu$ in \eqref{mudoub} will be called a \emph{geometric
constant}. For basic topological and measure theoretic results on spaces of homogeneous type, such as the existence of a constant $\alpha \in (0,1)$, depending only on $K$, and a distance $\rho$ in $X$ with 
$$
\frac{\rho(x,y)}{2} \leq d(x,y)^\alpha \leq 4 \rho(x,y) \quad \forall x, y \in X,
$$
or the fact that the atoms for $(X,d, \mu)$ must be countable and isolated (where $x \in X$ is an atom if $\mu(\{x\}) > 0$), see the pioneering work of Mac\'ias and Segovia \cite{MS79}. Another classical reference, which includes a long list of examples of spaces of homogeneous type, is the survey article by Coifman and Weiss \cite{CW77}. For an exposition of some of the topics in this survey in the Euclidean context but without involving the doubling condition, see \cite{OP02}.\\
\end{dfn}

\begin{dfn}\label{rmean} Given a function $w>0$, a ball $B$, and
$- \infty \leq p \leq \infty$, set
\begin{equation}\label{pmeanB}
w(p,B):=\left(\frac{1}{\mu(B)}\int_B w^p\,d\mu\right)^{1/p},
\end{equation}
whenever finite, to be the \emph{$p$-mean} of $w$ over $B$. Some special cases of \eqref{pmeanB} are noteworthy. The
$1$-mean of $w$ over $B$ is the arithmetic mean, or simply the
average, of $w$ over $B$. When $p=0$, $w(0,B)$ takes on the form
\[
w(0,B):= \lim\limits_{p \rightarrow 0} w(p,B) = \exp\left(\frac{1}{\mu(B)}\int_B \ln w\,d\mu\right),
\]
which is the geometric mean of $w$ over $B$. With the exponent $p=-1$ we get $w(-1,B)$, the harmonic mean of $w$ over $B$, and the exponents $p=-\infty$ and $p=\infty$ yield the essential infimum and essential supremum of $w$ over $B$, respectively, that is,
$$
w(-\infty, B) = \essinf\limits_{B} w \quad \text{ and } \quad w(\infty, B) = \esssup\limits_{B} w.
$$
\end{dfn}

\section{Reverse Classes and their diagrams}\label{secrcdef}

Let $(X,d,\mu)$ be a space of homogeneous type and let $\Omega \subset X$ be an open set. Throughout the article we will consider balls in the collection
\begin{equation}\label{defBOmega}
\B:=\{B\subset\Omega\mid 2B\subset\Omega\}.
\end{equation}

\begin{dfn}\label{RSC} Let $-\infty\leq r<s\leq \infty$ and $C \geq 1$. We write $w\in RC(r,s,C)$ and say that $w$ is in the reverse class
with exponents $r$ and $s$ if $w^r \in L^1_{loc}(\Omega)$ and 
\begin{equation}\label{rcdef}
w(r,B) \leq w(s,B) \leq C w(r,B) \quad \forall B \in \B.
\end{equation}
The smallest constant $C \geq 1$ validating \eqref{rcdef} will be called the \emph{reversal constant} of the weight $w$ in the class $RC(r,s)$ and it will be denoted by $[w]_{RC(r,s)}$. Also, define
$$
RC(r,s) := \bigcup\limits_{C \geq1} RC(r,s,C).
$$
The fact that a weight $w$ belongs to a reverse class $RC(r,s)$ will be represented by the following diagram
\begin{figure}[H] 
   \centering
   \includegraphics[width=5in]{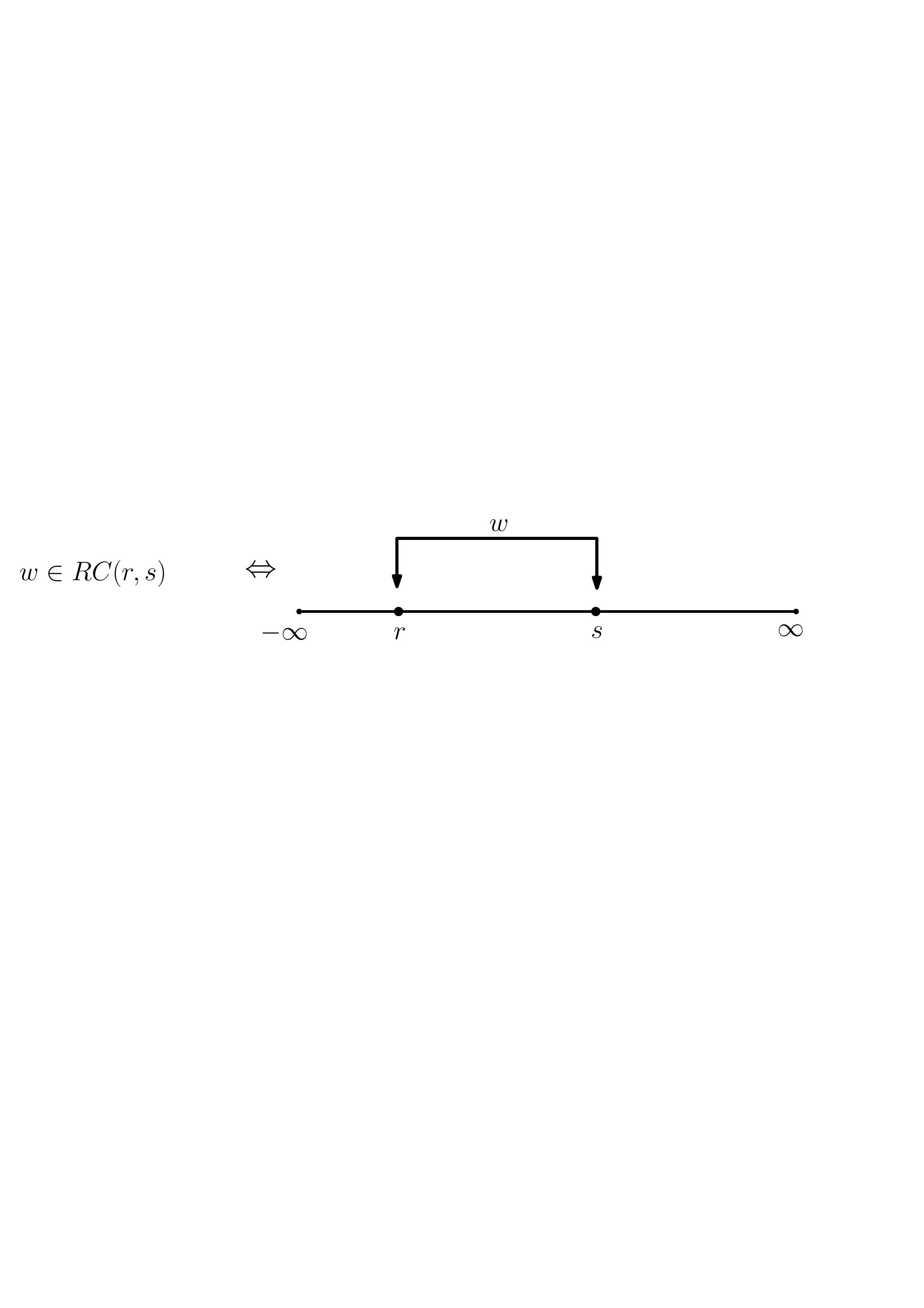}
   \caption{Illustration of the reverse class $RC(r,s)$, indicating the uniform comparability between $r$- and $s$-means.}
   \label{rc}
\end{figure}
In order to allow for more clear and less crowded pictures, we will make no distinctions between arrows drawn above or below the extended real line. That is,
\begin{figure}[H] 
   \centering
   \includegraphics[width=4.7in]{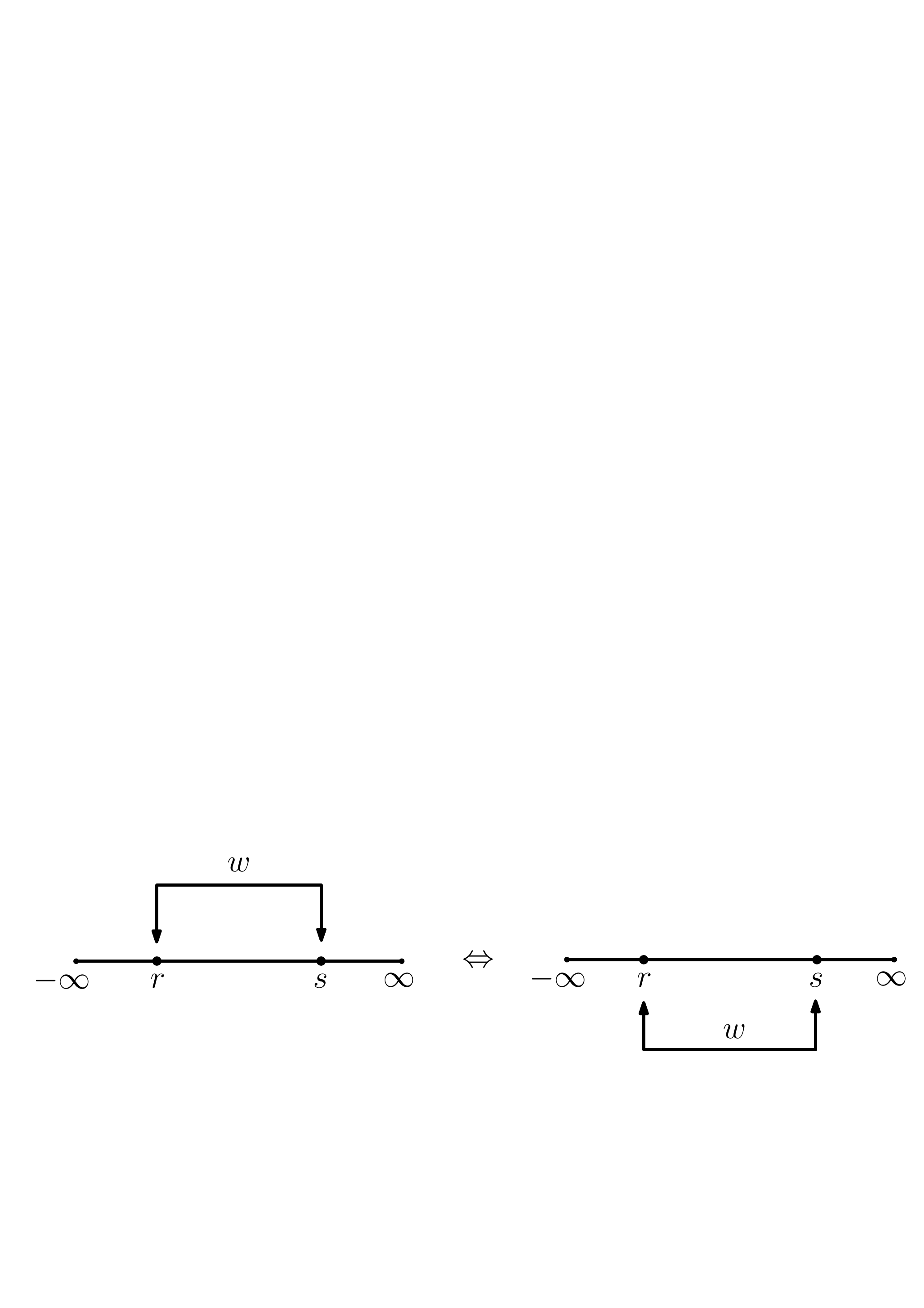}
   \caption{No distinction is made between arrows drawn above or below the extended real line.}
   \label{rc}
\end{figure}

In all rigor, we should write  $w\in RC(r,s,C, \Omega)$ to account for the open set $\Omega$ where the balls are taken, but $\Omega$ will be always understood from the context.
\end{dfn}

\begin{dfn}\label{wRSC} Let $-\infty\leq r<s\leq \infty$ and $C>0$. We write $w\in RC^{weak}(r,s,C)$, or
simply, $w\in RC^{weak}(r,s)$, and say that $w$ is in the weak reverse
class with exponents $r$ and $s$ if $w^r \in L^1_{loc}(\Omega)$ and 
$$
\left(\frac{1}{\mu(B)}\int_B w^s \,d\mu\right)^{1/s}\leq
C\left(\frac{1}{\mu(B)}\int_{2B} w^r \,d\mu\right)^{1/r} \quad \forall B \in \B. 
$$
We visually represent this condition by joining the two exponents $r$ and $s$ by a dashed straight line with arrowheads
pointing at them as illustrated in Figure \ref{rcweak}.
\begin{figure}[H] 
   \centering
   \includegraphics[width=5in]{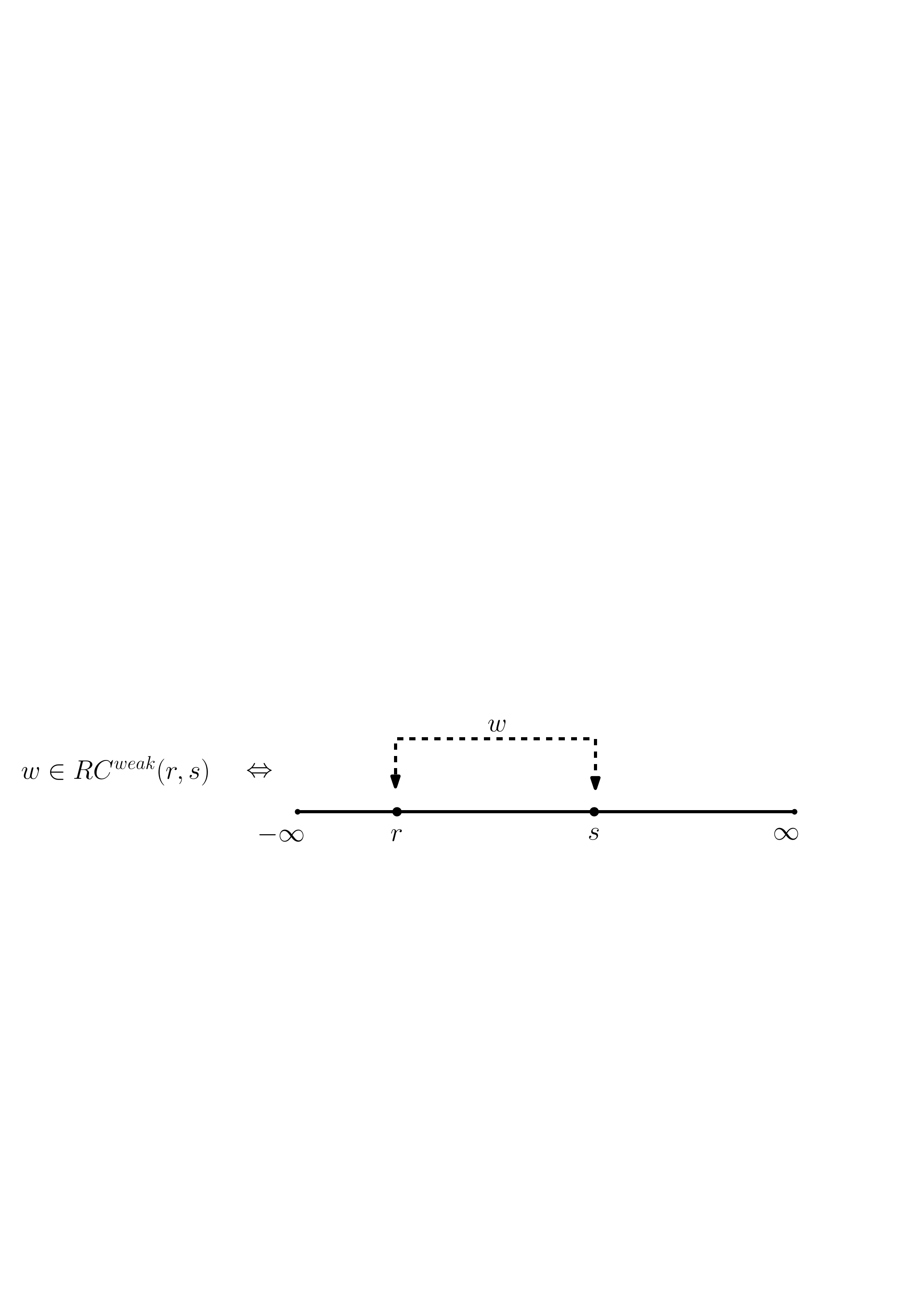}
   \caption{The class $RC^{weak}(r,s)$. }
   \label{rcweak}
\end{figure}
\end{dfn}

\begin{rem}\label{rem:dashedtosolid}
Notice that if  $w\in RC^{weak}(r,s)$ and if $w^r$ is a doubling weight, in the sense that there exists a constant $C > 0$ such that $w^r(1,2B) \leq C w^r(1,B) $ for every $B \in \B$, then $w\in RC(r,s)$ and the dashed line in Figure \ref{rcweak} becomes a solid one.
\end{rem}

\section{The three axioms of the visual formalism}\label{secprop}

In this section we introduce the three main properties of the visual formalism: the shrinking property, the concatenation property, and the scaling property. These  properties will be deduced from the definition of the reverse classes; however, they will be used as axioms during formal manipulations.

\subsection{The shrinking property} This property says that the arrows representing reverse classes can always be shrunk without worsening reversal constants. More precisely,

\begin{lem}\label{lemmashrink} Fix any $-\infty \leq r\leq \tilde{r}<\tilde{s}\leq s \leq \infty$.  If $w\in RC(r,s,C)$, then $w\in
RC(\tilde{r},\tilde{s},C)$. Visually,
\begin{figure}[H] 
   \centering
   \includegraphics[width=5.2in]{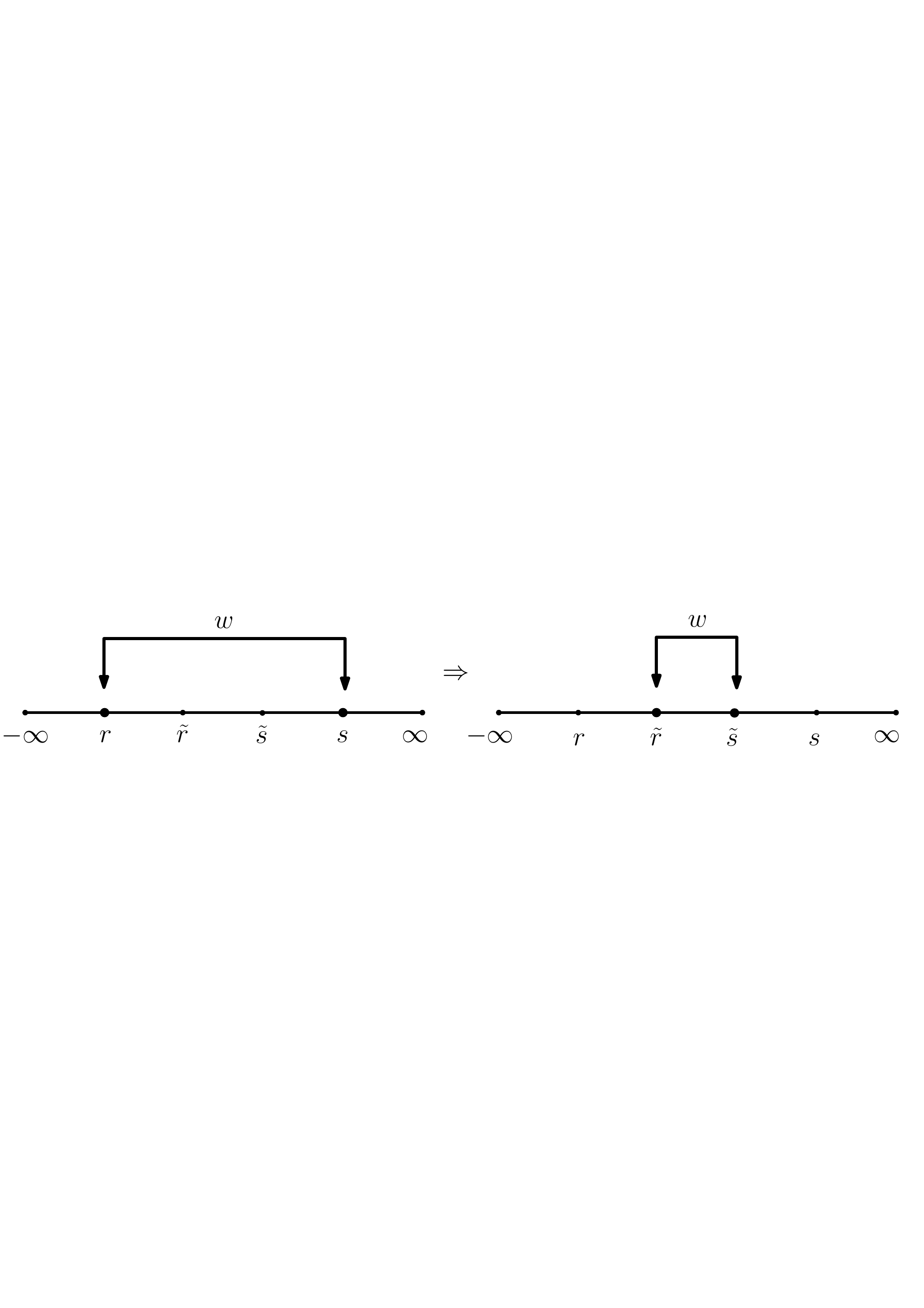}
   \caption{The shrinking property, that is, $RC(r,s,C) \subset RC(\tilde{r},\tilde{s},C)$. In particular, $[w]_{RC(\tilde{r},\tilde{s})} \leq [w]_{RC(r,s)}$.}
   \label{freerc}
\end{figure}
\end{lem}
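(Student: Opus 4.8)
The plan is to reduce the statement to two separate one-sided comparisons that follow directly from Hölder's inequality \eqref{natural}, applied ball by ball. Since the reverse class condition is a pointwise-in-$B$ statement quantified over all $B \in \B$, it suffices to fix an arbitrary $B \in \B$ and show that $w(\tilde r, B) \leq w(\tilde s, B) \leq C\, w(\tilde r, B)$, given that $w(r,B) \leq w(s,B) \leq C\, w(r,B)$.

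First I would record the two inequalities supplied by \eqref{natural} for the fixed ball $B$: since $r \leq \tilde r$ we have $w(r,B) \leq w(\tilde r, B)$, and since $\tilde s \leq s$ we have $w(\tilde s, B) \leq w(s, B)$. The left inequality of the conclusion, $w(\tilde r, B) \leq w(\tilde s, B)$, is itself just an instance of \eqref{natural} since $\tilde r < \tilde s$. For the right inequality, chain the estimates:
\[
w(\tilde s, B) \leq w(s, B) \leq C\, w(r, B) \leq C\, w(\tilde r, B),
\]
where the middle step is the hypothesis $w \in RC(r,s,C)$ and the outer steps are the monotonicity facts just recorded. This shows $w \in RC(\tilde r, \tilde s, C)$; since $B$ was arbitrary and $w^r \in L^1_{loc}$ forces $w^{\tilde r} \in L^1_{loc}$ as well (again by \eqref{natural} applied with exponents $r \leq \tilde r$ on each ball, together with local integrability), the membership is justified. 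The statement about reversal constants, $[w]_{RC(\tilde r, \tilde s)} \leq [w]_{RC(r,s)}$, is immediate: taking $C = [w]_{RC(r,s)}$ above produces an admissible constant for $RC(\tilde r, \tilde s)$, and the infimum defining $[w]_{RC(\tilde r, \tilde s)}$ can only be smaller.

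I do not anticipate a genuine obstacle here — the lemma is essentially a bookkeeping consequence of the monotonicity of $p$-means. The only points requiring a modicum of care are the boundary exponent cases $p \in \{0, -\infty, \infty\}$, where $w(p,B)$ is interpreted via the limiting expressions in Definition \ref{rmean} (geometric mean, essential infimum, essential supremum); one should note that \eqref{natural} was already asserted to hold, finite or infinite, across all these cases, so the same chaining goes through verbatim. A second minor point is the passage from $w^r \in L^1_{loc}(\Omega)$ to $w^{\tilde r} \in L^1_{loc}(\Omega)$ when $\tilde r > r$: this is where one uses that on a ball $B$ with $2B \subset \Omega$ one has control of $w(\tilde r, B)$ by $w(s,B)$ and hence, via the hypothesis, by $w(r,B) < \infty$; covering $\Omega$ by such balls gives local integrability of $w^{\tilde r}$.
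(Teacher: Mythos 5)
Your argument is correct and is essentially the paper's own proof: chain the natural (H\"older/Jensen) monotonicity inequalities $w(\tilde s,B)\leq w(s,B)$ and $w(r,B)\leq w(\tilde r,B)$ with the hypothesis $w(s,B)\leq C\,w(r,B)$ to get $w(\tilde s,B)\leq C\,w(\tilde r,B)$ for every $B\in\B$. Your extra remarks on the boundary exponents and on local integrability of $w^{\tilde r}$ are sound but only make explicit what the paper leaves tacit.
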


\begin{proof} Since  $\tilde{s}\leq s$ and $r\leq \tilde{r}$ we have natural inequalities
\begin{equation}\label{stildetos}
w(\tilde{s}, B)\leq w(s, B) \quad \text{ and } \quad  w(r,B) \leq w(\tilde{r},B).
\end{equation} 
Now, combining \eqref{stildetos} with the defining
inequality of $w\in RC(r,s,C)$, we obtain $ w(\tilde{s}, B)\leq w(s,B) \leq C w(r,B) \leq C w(\tilde{r},B).$
\end{proof}

\subsubsection{The splitting property} As a consequence of the shrinking property, it follows that: for every $-\infty \leq r < s < t \leq \infty$ we obtain that $RC(r,t, C) \subset RC(r,s,C) \cap RC(s,t,C)$; in particular, $\max\{ [w]_{RC(r,s)},[w]_{RC(s,t)} \} \leq [w]_{RC(r,t)}$. We call this the splitting property and visually represent it with Figure \ref{fig:split}.
\begin{figure}[H] 
   \centering
   \includegraphics[width=4.5in]{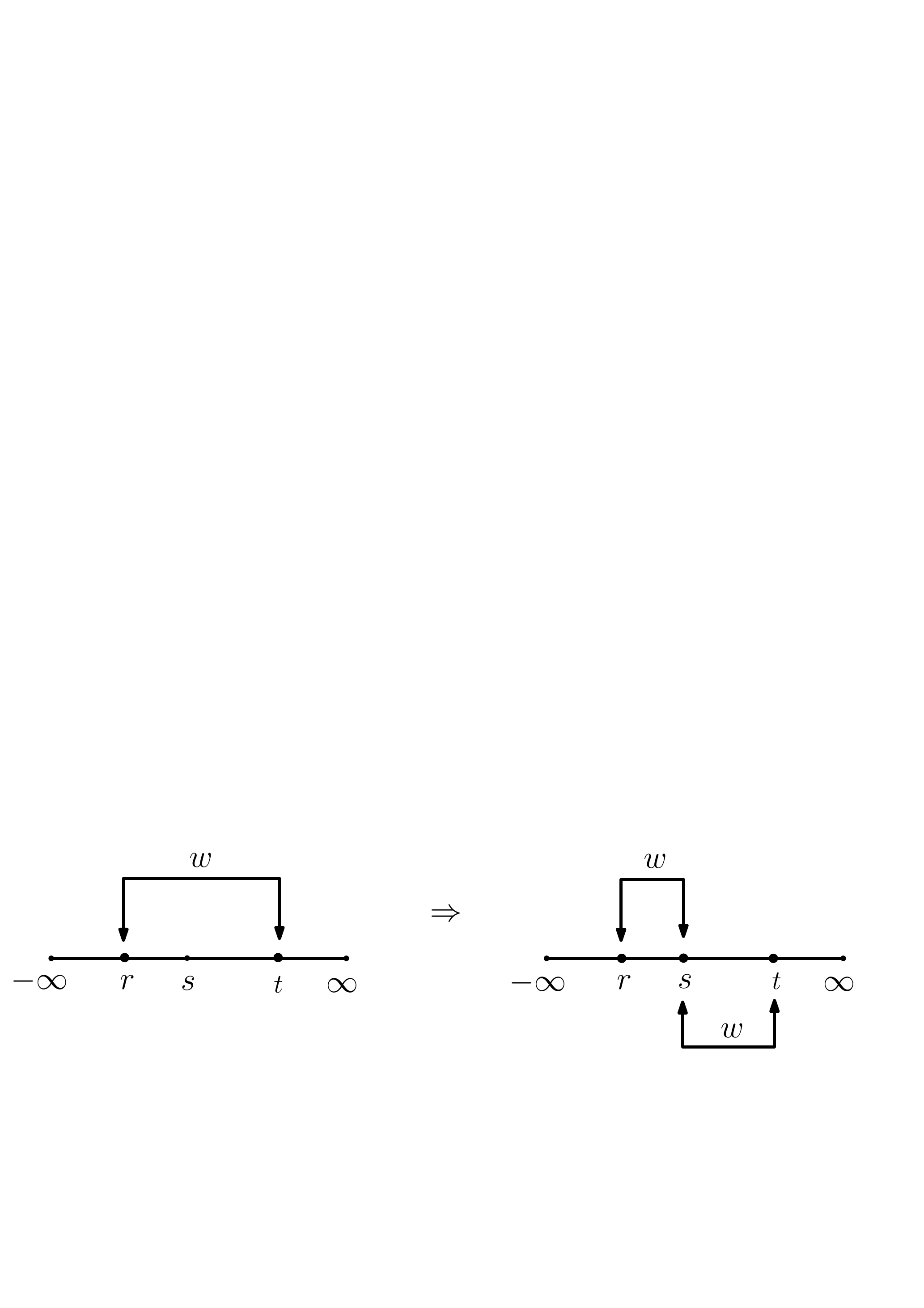}
   \caption{The splitting property:  $RC(r,t, C) \subset RC(r,s,C) \cap RC(s,t,C)$ and $\max\{ [w]_{RC(r,s)},[w]_{RC(s,t)} \} \leq [w]_{RC(r,t)}$.}
   \label{fig:split}
\end{figure}

As shown, the arrows representing reverse classes can always be shrunk as much as desired. However, they cannot be stretched as much as we wish. As we will see  in Section \ref{secc:selfimprove}, stretching the arrows will describe self-improvement properties for reverse classes.

\subsection{The concatenation property} This property provides the converse to the splitting property. It says that whenever an arrow begins where another ends, then they can be concatenated. Moreover, the reversal constant for the concatenation is controlled by the product of the reversal constants involved. 

\begin{lem}\label{freelemmaa} Fix any $-\infty \leq r < s < t \leq \infty$. If $w\in RC(r,s,C_1)\cap RC(s,t,C_2)$, then $w\in
RC(r,t,C_1C_2)$. Consequently,  $RC(r,s)\cap RC(s,t) = RC(r,t)$.
\end{lem}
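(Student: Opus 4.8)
The plan is to establish the inequality $w(t,B) \leq C_1 C_2\, w(r,B)$ for every $B \in \B$ by chaining the reversal inequalities through the intermediate exponent $s$. First I would record that the hypothesis $w \in RC(r,s,C_1)$ gives, for each fixed $B \in \B$, the chain $w(r,B) \leq w(s,B) \leq C_1\, w(r,B)$, while $w \in RC(s,t,C_2)$ gives $w(s,B) \leq w(t,B) \leq C_2\, w(s,B)$. In particular the local integrability $w^r \in L^1_{loc}(\Omega)$ is part of the first hypothesis, so the quantity $w(r,B)$ is finite and the diagram on the left end is well-defined; I would note that finiteness of $w(t,B)$ then follows from the second chain together with $w(s,B) \leq C_1 w(r,B) < \infty$.

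Next I would combine the two chains. From $w(t,B) \leq C_2\, w(s,B)$ and $w(s,B) \leq C_1\, w(r,B)$ we get $w(t,B) \leq C_1 C_2\, w(r,B)$. For the lower bound in \eqref{rcdef} I would simply invoke the natural inequality \eqref{natural}: since $r < t$, $w(r,B) \leq w(t,B)$ automatically. Putting these together yields $w(r,B) \leq w(t,B) \leq C_1 C_2\, w(r,B)$ for all $B \in \B$, which is exactly the statement $w \in RC(r,t,C_1 C_2)$.

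For the final assertion $RC(r,s) \cap RC(s,t) = RC(r,t)$, I would argue by double inclusion, both directions now being free. The inclusion $RC(r,t) \subset RC(r,s) \cap RC(s,t)$ is the splitting property already recorded in Section \ref{secprop} (a consequence of \lemref{lemmashrink}), so only the reverse inclusion remains; but that is precisely what the first part of the lemma delivers, since if $w \in RC(r,s) \cap RC(s,t)$ then $w \in RC(r,s,C_1) \cap RC(s,t,C_2)$ for some finite $C_1, C_2 \geq 1$, hence $w \in RC(r,t,C_1 C_2) \subset RC(r,t)$.

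There is really no hard step here: the argument is a two-line concatenation of reversal inequalities, and the only point requiring a moment's care is the bookkeeping that $w(t,B)$ is actually finite (so that the inequalities are not vacuous), which follows immediately from the hypotheses as noted above. One could also phrase the whole proof in the visual formalism by placing the two arrows $r \to s$ and $s \to t$ head-to-tail and reading off the composite arrow $r \to t$ with reversal constant bounded by the product, which is the mnemonic content of \figref{fig:split} read in reverse.
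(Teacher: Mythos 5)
Your proof is correct and follows essentially the same route as the paper: the core step is the same two-line concatenation $w(t,B)\leq C_2\,w(s,B)\leq C_1C_2\,w(r,B)$, and the set equality via splitting plus concatenation is exactly how the paper handles it. The extra remarks on finiteness and the natural lower bound are fine but not a departure from the paper's argument.
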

 Bearing in mind that there is no distinction between arrows drawn above or below the extended real line, the statement of Lemma \ref{freelemmaa} can be visually recast as follows:
\begin{figure}[H] 
  \centering
   \includegraphics[width=5.4in]{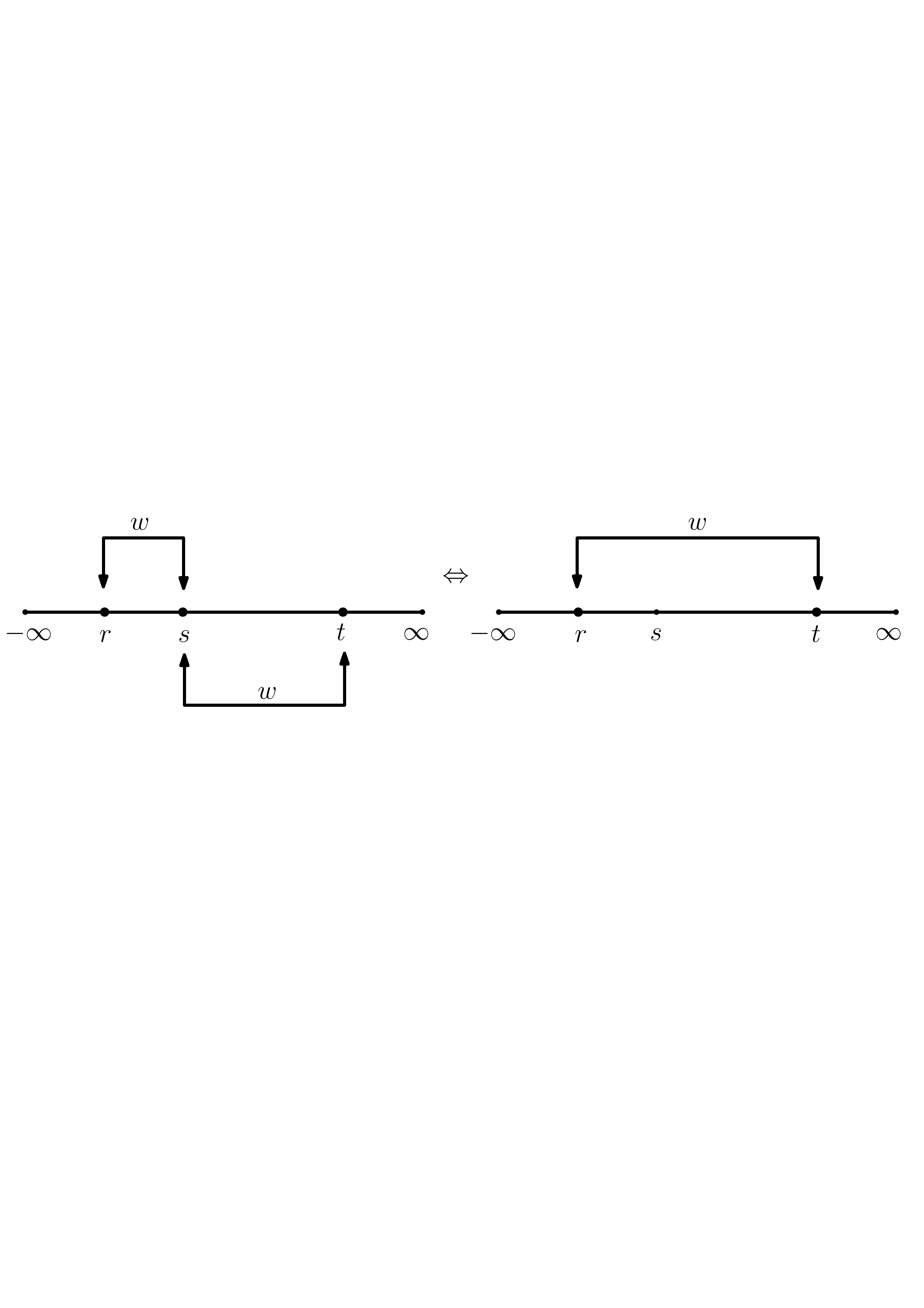}
   \caption{The concatenation property. Quantitatively, $[w]_{RC(r,t)} \leq [w]_{RC(r,s)}[w]_{RC(s,t)}$.}
   \label{rcconnect}
\end{figure}
Notice that the concatenation property together with the splitting property implies that $RC(r,s)\cap RC(s,t) = RC(r,t)$.

\emph{Proof of Lemma \ref{freelemmaa}}. Since $w\in
RC(r,s,C_1)\cap RC(s,t,C_2)$, we have
$
w(t,B) \leq C_2 w(s, B) \leq C_1 C_2 w(r, B). $
 \qed

\subsection{The scaling property}
This property dictates how reverse classes and reversal constants  behave under the operation of taking powers. More precisely,
\begin{theor}\label{rcmain}
Let $- \infty \leq r < s \leq \infty$ and fix $w\in RC(r,s,C)$. Then,
\begin{enumerate}[(i)]
\item\label{rcmain1} for any $\theta>0$,  we have $w^\theta\in
RC(\frac{r}{\theta},\frac{s}{\theta},C^\theta)$,
\item\label{rcmain2} for any $\theta
<0$, we have $w^\theta \in RC(\frac{s}{\theta},\frac{r}{\theta},C^{|\theta|})$.
\end{enumerate}
Visually, this is illustrated in Figure \ref{fig:rcraisedtwo}.
\begin{figure}[H] 
   \centering
   \includegraphics[width=5.2in]{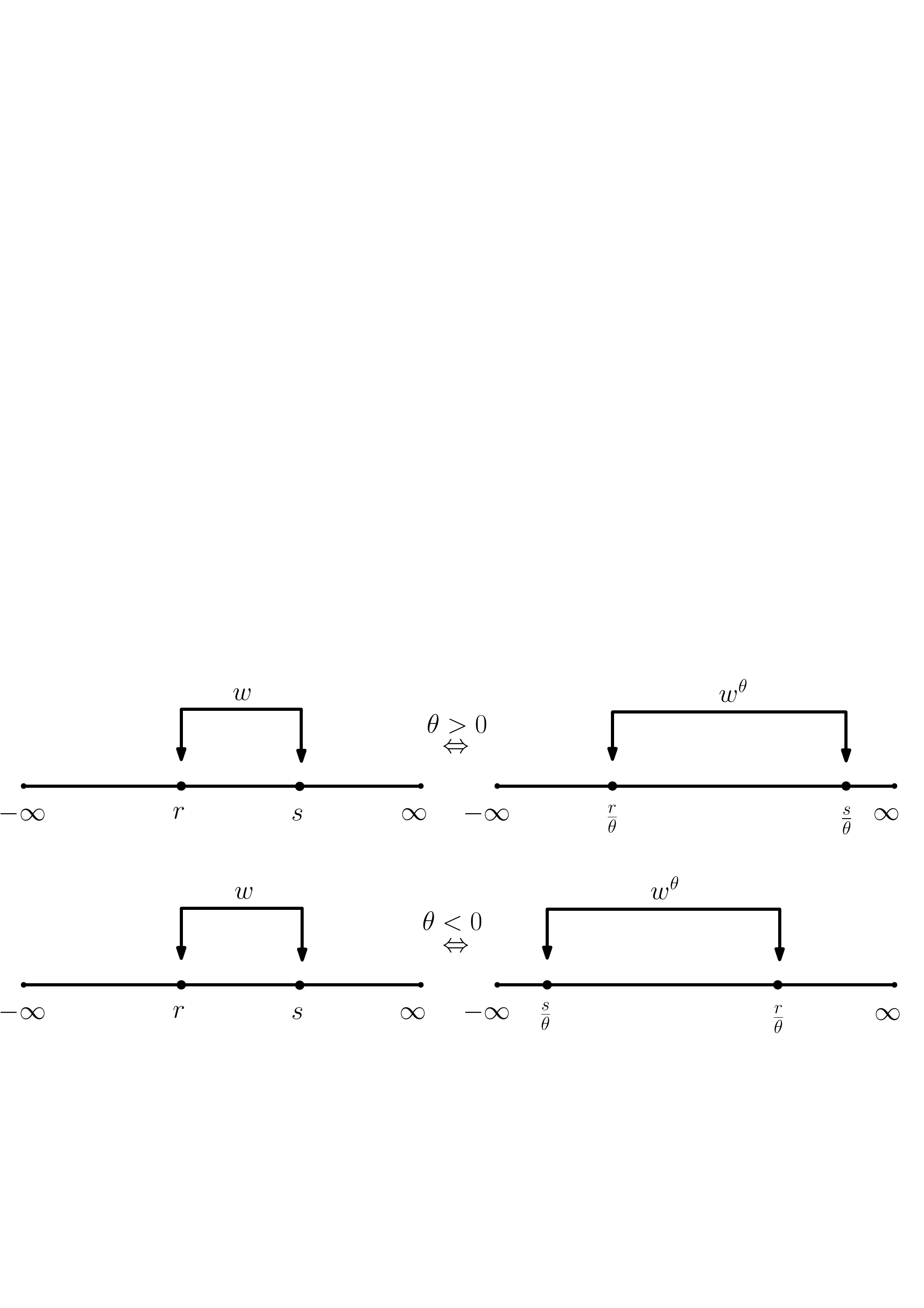}
   \caption{The scaling property. Quantitatively, $[w^\theta]_{RC(r/\theta, s/\theta)} = [w]_{RC(r,s)}^\theta$, for $\theta > 0$; and $[w^\theta]_{RC(s/\theta, r/\theta)} = [w]_{RC(r,s)}^{|\theta|}$, for $\theta < 0$.}
   \label{fig:rcraisedtwo}
\end{figure}
\end{theor}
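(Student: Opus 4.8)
The plan is to isolate a single power-homogeneity identity for $p$-means and then deduce both parts by raising the defining double inequality of $RC(r,s,C)$ to the power $\theta$. First I would record that for every ball $B$, every exponent $-\infty\le p\le\infty$, and every $\theta\ne 0$,
\[
w^\theta\bigl(\tfrac{p}{\theta},B\bigr)=\bigl(w(p,B)\bigr)^\theta ,
\]
with the convention that $(\pm\infty)/\theta$ denotes $\pm\infty$ when $\theta>0$ and $\mp\infty$ when $\theta<0$. For $p\notin\{0,\pm\infty\}$ this is a one-line change of variables in \eqref{pmeanB}, since $\bigl(\tfrac{1}{\mu(B)}\int_B (w^\theta)^{p/\theta}\,d\mu\bigr)^{\theta/p}=\bigl(\tfrac{1}{\mu(B)}\int_B w^{p}\,d\mu\bigr)^{\theta/p}=\bigl(w(p,B)\bigr)^\theta$. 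For the three endpoint exponents I would invoke the interpretations fixed in \dfnref{rmean}: the identity $\exp\bigl(\tfrac{1}{\mu(B)}\int_B\ln w^\theta\,d\mu\bigr)=\exp\bigl(\tfrac{1}{\mu(B)}\int_B\ln w\,d\mu\bigr)^\theta$ settles $p=0$, while $\esssup_B w^\theta=\bigl(\esssup_B w\bigr)^\theta$ and $\essinf_B w^\theta=\bigl(\essinf_B w\bigr)^\theta$ for $\theta>0$ (with the roles of $\esssup$ and $\essinf$ exchanged when $\theta<0$) settle $p=\pm\infty$; in every case this matches the right-hand side under the sign convention above.

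With this identity in hand, part~(i) is immediate: for $\theta>0$ the map $t\mapsto t^\theta$ is increasing on $[0,\infty)$ and $r<s$ gives $r/\theta<s/\theta$, so raising the chain $w(r,B)\le w(s,B)\le C\,w(r,B)$ to the power $\theta$ and substituting via the homogeneity identity yields
\[
w^\theta\bigl(\tfrac{r}{\theta},B\bigr)\le w^\theta\bigl(\tfrac{s}{\theta},B\bigr)\le C^\theta\,w^\theta\bigl(\tfrac{r}{\theta},B\bigr)\qquad\forall B\in\B,
\]
which is precisely \eqref{rcdef} for $w^\theta\in RC(r/\theta,s/\theta,C^\theta)$; the integrability clause in \dfnref{RSC} transforms correctly because $(w^\theta)^{r/\theta}=w^r\in L^1_{loc}(\Omega)$ by hypothesis.

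For part~(ii), $\theta<0$, two things flip at once: the order of the transformed exponents, since now $s/\theta<r/\theta$, and the direction of the inequalities, since $t\mapsto t^\theta$ is decreasing on $(0,\infty)$. Concretely, $w(r,B)\le w(s,B)$ becomes $w(s,B)^\theta\le w(r,B)^\theta$, and $w(s,B)\le C\,w(r,B)$ becomes $C^\theta w(r,B)^\theta\le w(s,B)^\theta$, i.e.\ $w(r,B)^\theta\le C^{|\theta|}w(s,B)^\theta$; together with the homogeneity identity these give
\[
w^\theta\bigl(\tfrac{s}{\theta},B\bigr)\le w^\theta\bigl(\tfrac{r}{\theta},B\bigr)\le C^{|\theta|}\,w^\theta\bigl(\tfrac{s}{\theta},B\bigr)\qquad\forall B\in\B,
\]
so $w^\theta\in RC(s/\theta,r/\theta,C^{|\theta|})$. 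The integrability requirement here is $(w^\theta)^{s/\theta}=w^s\in L^1_{loc}(\Omega)$, which is implicit in the hypothesis: the chain \eqref{rcdef} is among finite quantities, so $w(s,B)<\infty$ for every $B\in\B$, whence $w^s\in L^1_{loc}(\Omega)$ by a routine covering argument, the clause being vacuous when $s=\infty$.

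The computations are elementary throughout, so the real point is not an \emph{obstacle} but a matter of care: keeping straight which of $r/\theta$ and $s/\theta$ is the smaller exponent and which way the inequalities point when $\theta<0$, and checking that the endpoint exponents $0,\pm\infty$ genuinely obey the same homogeneity identity — which is where the conventions of \dfnref{rmean} earn their keep. Finally, applying the transformation just proved to $w^\theta$ with exponent $1/\theta$ inverts it, so the inclusions are in fact equalities of classes and the reversal constants satisfy $[w^\theta]_{RC(r/\theta,s/\theta)}=[w]_{RC(r,s)}^\theta$ for $\theta>0$ and $[w^\theta]_{RC(s/\theta,r/\theta)}=[w]_{RC(r,s)}^{|\theta|}$ for $\theta<0$, as recorded in \figref{fig:rcraisedtwo}.
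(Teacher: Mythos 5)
Your proposal is correct and follows essentially the same route as the paper: both arguments rest on the observation that the $(p/\theta)$-mean of $w^\theta$ is the $\theta$-th power of the $p$-mean of $w$, and then raise the defining inequality of $RC(r,s,C)$ to the power $\theta$, with the inequality (and the order of the exponents) reversing when $\theta<0$. Your extra care with the endpoint exponents $0,\pm\infty$, the local-integrability clause, and the inversion argument giving equality of the reversal constants only makes explicit what the paper leaves implicit.
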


\begin{proof}
Let us prove \eqref{rcmain1}. Since $w\in RC(r,s,C)$, given $B \in \B$ we have
\be\label{mi} 
w(s,B)=\left(\frac{1}{\mu(B)}\int_B
w^{\theta\frac{s}{\theta}} \,d\mu\right)^{1/s}\leq C w(r,B) =
C\left(\frac{1}{\mu(B)}\int_B w^{\theta \frac{r}{\theta}}\,d\mu
\right)^{1/r}. 
\ee 
For $\theta>0$ we get
$$
\left(\frac{1}{\mu(B)}\int_B w^{\theta\frac{s}{\theta}}\,d\mu
\right)^{\theta/s}\leq C^\theta\left(\frac{1}{\mu(B)}\int_B
w^{\theta \frac{r}{\theta}} \,d\mu\right)^{\theta/r}, 
$$
that is, $w^\theta\in RC(\frac{r}{\theta},\frac{s}{\theta},C^\theta).$ To prove \eqref{rcmain2}, we use \eqref{mi} again, but now with $\theta < 0$ 
$$
\left(\frac{1}{\mu(B)}\int_B 
w^{\theta\frac{s}{\theta}} \,d\mu\right)^{\theta/s}\geq
C^\theta\left(\frac{1}{\mu(B)}\int_B w^{\theta
\frac{r}{\theta}}\,d\mu \right)^{\theta/r}, 
$$ 
that is, $w^\theta\in RC(\frac{s}{\theta},\frac{r}{\theta},C^{|\theta|})$.
\end{proof}

\begin{rem}\label{rem:qq} It is convenient to point out that, during the formal manipulations, the axioms above provide qualitative as well as quantitative information about the reverse classes and their weights. Meaning that when a particular weight is subjected to a sequence of axioms of the visual formalism, there will always be an explicit control on the reversal constants. 
\end{rem}

\section{Some well-known reverse classes}

\begin{dfn}
Let $1<s\leq\infty$. A weight $w$ is said to belong to $RH_s$, the reverse H\"older class of order $s$, 
if the following inequality holds 
\bee
[w]_{RH_s}:=\ds\sup_{B\in\B} w(s,B)w(1,B)^{-1}  < \infty. 
\eee
In other words, $RH_s = RC(1,s)$. Visually,
\begin{figure}[H]    
\centering
   \includegraphics[width=4.7in]{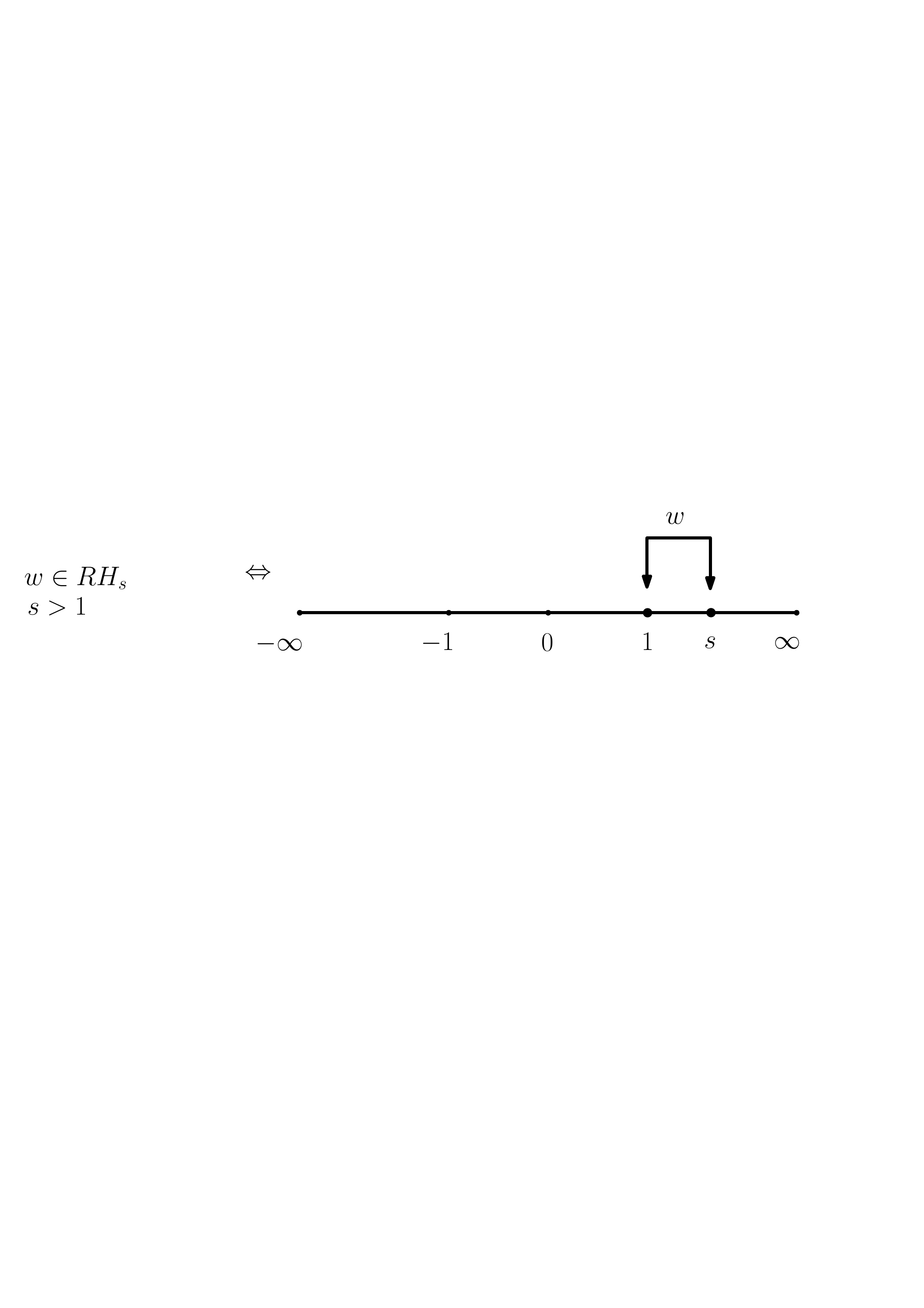}
   \caption{The $RH_s$ classes. Notice that $RH_s = RC(1,s)$.}
      \label{rhs}
\end{figure}
\end{dfn}

\begin{dfn}\label{ap} Let $1< p<\infty$. A weight $w$ is said to belong to  the Muckenhoupt class $A_p$ if 
\bee\label{ap}
[w]_{A_p}:=\ds\sup_{B\in\B} w(1,B) w\left(\frac{1}{1-p}, B\right)^{-1}< \infty. 
\eee
In other words,  $ A_p= RC(1/(1-p),1)$. We write $w\in A_1$  if 
\bee\label{ap}
[w]_{A_1}:=\ds\sup_{B\in\B} w(1,B) w(-\infty, B)^{-1}< \infty.
\eee
That is, $A_1 = RC(-\infty, 1)$. Finally, we write $w\in A_\infty$  if 
\begin{equation}\label{ainfH}
[w]_{A_\infty}:=\ds\sup_{B\in\B} w(1,B) w(0,B)^{-1}< \infty.
\end{equation}
That is, $A_\infty = RC(0,1)$. Again, we have chosen to use the notation $A_p$ instead of $A_p(\Omega)$. Muckenhoupt classes can be visually represented by the following diagrams in Figure \ref{ap}.
\begin{figure}[H] 
   \centering
   \includegraphics[width=4.5in]{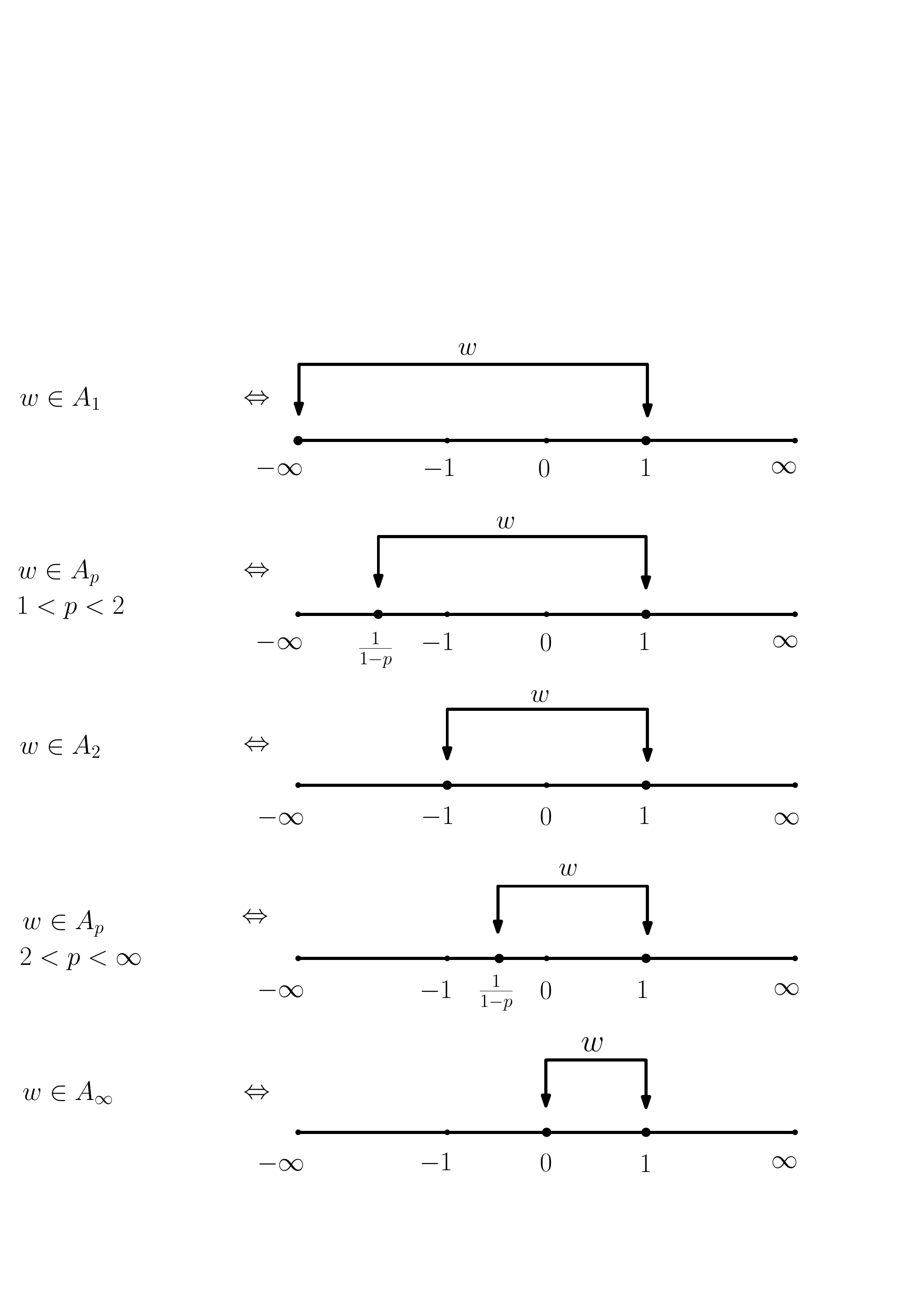}
   \caption{The $A_p$ classes as reverse classes: $A_p = RC((1-p)^{-1}, 1)$.}
   \label{ap}
\end{figure}
\end{dfn}

Again, we refer the reader to \cite[Chapter 7]{Duo}, \cite[Chapter IV]{GCRdF}, \cite[Chapter 9]{Grafakos}, \cite[Chapter 2]{Jou83}, \cite{Muck72}, \cite[Chapter 5]{St}, \cite[Chapters I and II]{StrTor}, \cite[Chapter IX]{Tor}, as well as references therein, for all the basic material and history of reverse H\"older classes and Muckenhoupt weights.

\begin{rem} The constant $[w]_{A_\infty}$ in \eqref{ainfH} was introduced by Hru\v{s}\v{c}ev in \cite{Hru84}. Another constant, which has lately received a lot of attention, given by
\begin{equation}
[w]'_{A_\infty}:= \sup\limits_{B \in \B} \frac{1}{\mu(B)} \int_B \mathcal{M}(w \chi_B) \, d\mu,
\end{equation}
where $\mathcal{M}$ stands for the Hardy-Littlewood maximal operator, was introduced by Fujii \cite{Fujii} and Wilson \cite{W}. The constant $[w]'_{A_\infty}$ also characterizes $A_\infty$. More precisely, in the Euclidean setting (with $\Omega = \rn$), Hyt\"onen and P\'erez proved in \cite{HP} that one has $[w]'_{A_\infty} \leq c_n [w]_{A_\infty}$, for some dimensional constant $c_n > 0$. A corresponding inequality was proved in the context of spaces of homogeneous type by Hyt\"onen, P\'erez, and Rela in \cite{HPR}. The study of $A_\infty$ through the constant $[w]'_{A_\infty}$ seems to be better suited for obtaining sharp reversal constants and reversal exponents, see \cite{HP, HPR}, and references therein. 
\end{rem}

\subsection{Some practice} With the purpose of getting some practice with the visual formalism, next we go over some well-known properties of $A_p$ weights (see, for instance, \cite[Section 1]{JN91} and \cite{CUN95}). 

It will be useful to bear in mind Remark \ref{rem:qq} as well as the facts that, for $1 < p < \infty$,  $A_p = RC((1-p)^{-1}, 1)$, $A_1 = RC(-\infty, 1)$, and $A_\infty = RC(0,1)$. 

\begin{cor}\label{pinq} Fix $1<p <\infty$ and let $p'$ denote its H\"older conjugate, that is, $\frac{1}{p}+\frac{1}{p'}=1$. Then $w\in A_p$ if and only if $w^{1-p'}\in A_{p'}$. Moreover, $[w^{1-p'}]_{A_{p'}}=[w]^{p'-1}_{A_p}$.
\end{cor}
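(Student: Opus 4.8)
The plan is to translate the statement entirely into the language of reverse classes and then apply the scaling property (Theorem~\ref{rcmain}), which already carries the quantitative bookkeeping of reversal constants. Recall that $A_p = RC((1-p)^{-1},1)$, so $w \in A_p$ means $w \in RC((1-p)^{-1},1,[w]_{A_p})$. Since $\theta := 1-p' < 0$, part \eqref{rcmain2} of Theorem~\ref{rcmain} applies to $w^\theta$: starting from $w \in RC(r,s,C)$ with $r = (1-p)^{-1}$, $s = 1$, and $C = [w]_{A_p}$, we get $w^\theta \in RC(s/\theta, r/\theta, C^{|\theta|})$.

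The next step is purely arithmetic: I would check that $s/\theta = 1/(1-p')$ and $r/\theta = (1-p)^{-1}/(1-p') = (1-p')^{-1}$ collapse to exactly the exponent pair defining $A_{p'}$, namely $RC((1-p')^{-1},1)$. Here one uses the standard conjugate identities $1-p' = \frac{1}{1-p}$ (equivalently $(p-1)(p'-1)=1$), so that $1/\theta = 1-p$ gives $s/\theta = 1-p = (1-p')^{-1}$ and $r/\theta = \frac{1}{1-p}\cdot(1-p) = 1$. Thus the pair $(s/\theta, r/\theta) = ((1-p')^{-1},1)$, which is precisely the pair for $A_{p'}$, and one reads off $w^{1-p'} \in A_{p'}$ with $[w^{1-p'}]_{A_{p'}} \leq [w]_{A_p}^{|\theta|} = [w]_{A_p}^{p'-1}$.

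For the reverse implication and the equality of reversal constants, I would not repeat the argument but instead note that the map $w \mapsto w^{1-p'}$ is an involution up to the symmetric roles of $p$ and $p'$: applying the same computation to $w^{1-p'} \in A_{p'}$ with exponent $1-p$ recovers $(w^{1-p'})^{1-p} = w^{(1-p')(1-p)} = w \in A_p$ with $[w]_{A_p} \leq [w^{1-p'}]_{A_{p'}}^{p-1}$. Combining the two inequalities $[w^{1-p'}]_{A_{p'}} \leq [w]_{A_p}^{p'-1}$ and $[w]_{A_p} \leq [w^{1-p'}]_{A_{p'}}^{p-1}$, and raising the latter to the power $p'-1$ while using $(p-1)(p'-1)=1$, forces equality $[w^{1-p'}]_{A_{p'}} = [w]_{A_p}^{p'-1}$. (Alternatively, since Theorem~\ref{rcmain} is stated with an equality of reversal constants, the equality is immediate in one stroke.)

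The only real subtlety — hardly an obstacle — is making sure the exponent algebra with negative powers lands on the right class with arrows pointing the right way: part \eqref{rcmain2} swaps the order of the exponents (sending the pair $(r,s)$ to $(s/\theta,r/\theta)$), and one must confirm that after the swap the smaller exponent is indeed $r/\theta = 1$ placed second and the larger-in-magnitude negative exponent $s/\theta = 1-p$ placed first, consistent with the convention $-\infty \le r < s \le \infty$ in Definition~\ref{ap}. Once that is checked, the corollary is an immediate instance of the scaling axiom with $\theta = 1-p'$.
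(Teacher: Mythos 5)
Your proposal is correct and takes essentially the same route as the paper: the paper's visual proof is precisely the scaling axiom applied with $\theta = 1-p'$ in one direction and $\theta = 1-p$ in the other, using $(1-p)(1-p')=1$ and combining the two one-sided inequalities (or invoking the equality of reversal constants in Theorem~\ref{rcmain}) to get $[w^{1-p'}]_{A_{p'}}=[w]_{A_p}^{p'-1}$. (The stray claim $r/\theta=(1-p)^{-1}/(1-p')=(1-p')^{-1}$ early in your arithmetic is a slip, but the computation you actually carry out, $s/\theta = 1-p = (1-p')^{-1}$ and $r/\theta = 1$, is the correct one and is what the argument uses.)
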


\begin{proof} This proof uses only the scaling property along with the fact that $(1-p)(1-p')=1$.
\begin{figure}[H] 
   \centering
   \includegraphics[width=5in]{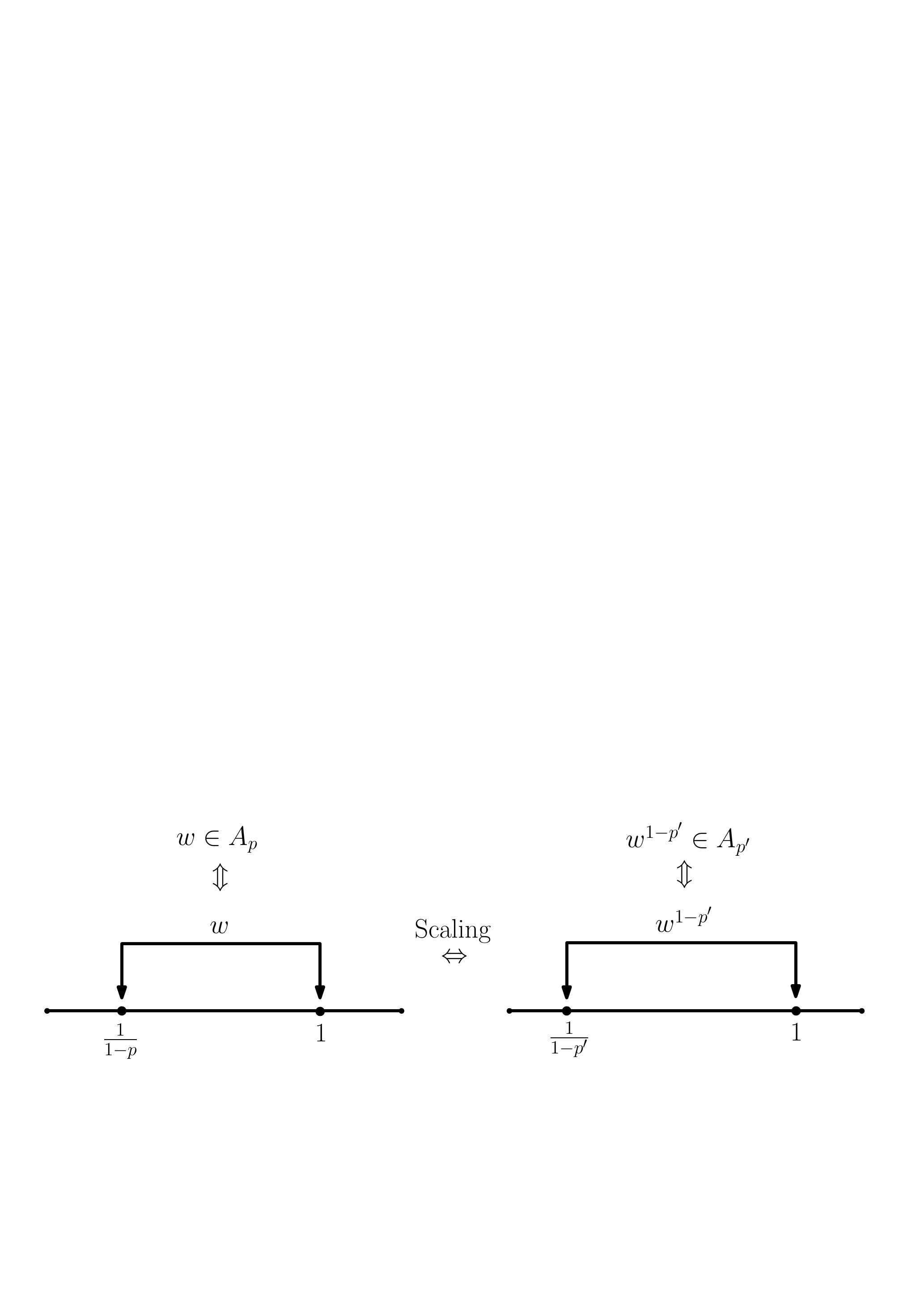}
   \caption{Visual proof of Corollary \ref{pinq}}
   \label{fig:pinq}
\end{figure}
Notice that, since scaling by a power $\theta$ has the effect of raising reversal constants to the power $|\theta|$ (see \thmref{rcmain}), for the sequence of steps, from left to right, in Figure \ref{fig:pinq} we have scaled by $\theta = 1-p' <0$, yielding
$$
 [w^{1-p'}]_{A_{p'}} \leq [w]_{A_{p'}}^{p'-1}.
$$
Conversely, taking the steps in Figure \ref{fig:pinq} from right to left we have scaled by $\theta = 1-p <0$, giving
$$
[w]_{A_p} \leq [w^{1-p'}]_{A_{p'}}^{p-1}.
$$
Hence,  $[w^{1-p'}]_{A_{p'}}=[w]^{p'-1}_{A_p}$.
\end{proof}

\begin{cor}\label{1inp} Fix $1 < p < \infty$. Then, $w\in A_1$ implies  $w^{1-p}\in A_p \cap RH_\infty$, and we have $\max\{[w^{1-p}]_{A_p}, [w^{1-p}]_{RH_\infty} \} \leq [w]_{A_1}^{p-1}$. Conversely, $w^{1-p}\in A_p \cap RH_\infty$ implies $w \in A_1$, and we have $[w]_{A_1} \leq  ([w^{1-p}]_{A_p} [w^{1-p}]_{RH_\infty} )^\frac{1}{p-1}.$
\end{cor}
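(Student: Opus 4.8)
The plan is to prove Corollary \ref{1inp} purely by formal manipulation with the three axioms, exactly in the spirit of the ``practice'' subsection. First I would translate everything into reverse-class language: $w \in A_1$ means $w \in RC(-\infty, 1)$; the target membership $w^{1-p} \in A_p$ means $w^{1-p} \in RC((1-p)^{-1}, 1)$; and $w^{1-p} \in RH_\infty$ means $w^{1-p} \in RC(1,\infty)$. Since $p > 1$ we have $\theta := 1-p < 0$, and scaling by $\theta$ reverses the order of exponents and divides them by $\theta$: under $\theta$, the pair $(-\infty, 1)$ maps to $(1/\theta, -\infty/\theta) = ((1-p)^{-1}, \infty)$. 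So the single arrow $RC(-\infty,1)$ for $w$ becomes the single arrow $RC((1-p)^{-1}, \infty)$ for $w^{1-p}$, with reversal constant raised to the power $|\theta| = p-1$.

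The second step is to apply the splitting property (the corollary of \lemref{lemmashrink}) to the exponent triple $(1-p)^{-1} < 1 < \infty$: from $w^{1-p} \in RC((1-p)^{-1}, \infty, [w]_{A_1}^{p-1})$ we get $w^{1-p} \in RC((1-p)^{-1}, 1, [w]_{A_1}^{p-1}) \cap RC(1, \infty, [w]_{A_1}^{p-1})$, that is, $w^{1-p} \in A_p \cap RH_\infty$ with both reversal constants bounded by $[w]_{A_1}^{p-1}$; this is the first assertion together with its quantitative bound. For the converse, I would start from $w^{1-p} \in A_p \cap RH_\infty = RC((1-p)^{-1},1) \cap RC(1,\infty)$ and invoke the concatenation property (\lemref{freelemmaa}) on the same triple to obtain $w^{1-p} \in RC((1-p)^{-1}, \infty)$ with reversal constant at most $[w^{1-p}]_{A_p}[w^{1-p}]_{RH_\infty}$. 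Then scaling by $1/\theta = (1-p)^{-1} < 0$ carries $w^{1-p}$ back to $w = (w^{1-p})^{1/(1-p)}$ and the pair $((1-p)^{-1},\infty)$ back to $(\infty/(1-p)^{-1}, (1-p)^{-1}/(1-p)^{-1})$; one checks $\infty \cdot (1-p) = -\infty$ and the endpoint becomes $1$, so we land in $RC(-\infty, 1) = A_1$, with reversal constant raised to the power $|1/\theta| = 1/(p-1)$, giving $[w]_{A_1} \le ([w^{1-p}]_{A_p}[w^{1-p}]_{RH_\infty})^{1/(p-1)}$.

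I would present this, as with Corollary \ref{pinq}, with a figure showing the chain of arrows: one arrow $-\infty \to 1$ over $w$, scaling to one arrow $(1-p)^{-1} \to \infty$ over $w^{1-p}$, then the intermediate point $1$ splitting that arrow into the $A_p$ and $RH_\infty$ pieces, and the reverse reading giving the converse. The only genuinely delicate point is bookkeeping the arithmetic of the extended exponents under negative scaling --- in particular confirming that $\infty$ scaled by the negative number $1-p$ is $-\infty$ (so that $RH_\infty$, not $RH_{-\infty}$, is the right companion class) and that the fixed endpoint $1 = \theta \cdot (1-p)^{-1}$ correctly returns to $1$; these are exactly the conventions fixed in Definition \ref{rmean} and Theorem \ref{rcmain}, so no real obstacle arises, only care with signs. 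I expect the main ``obstacle'' to be purely expository: making the figure and the signs of the scaling exponents ($\theta = 1-p$ forward, $\theta = (1-p)^{-1}$ backward) read transparently.
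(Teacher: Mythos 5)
Your proposal is correct and follows essentially the same route as the paper's own proof: scaling by $\theta=1-p<0$ to send $A_1=RC(-\infty,1)$ to $RC((1-p)^{-1},\infty)$ and then splitting at the exponent $1$, with the converse obtained by concatenation followed by scaling by $(1-p)^{-1}$, yielding exactly the stated bounds on the reversal constants. The only difference is expository: you spell out the extended-exponent arithmetic that the paper leaves to its figure.
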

\begin{proof} Here is the visual proof
\begin{figure}[H] 
   \centering
   \includegraphics[width=4.7in]{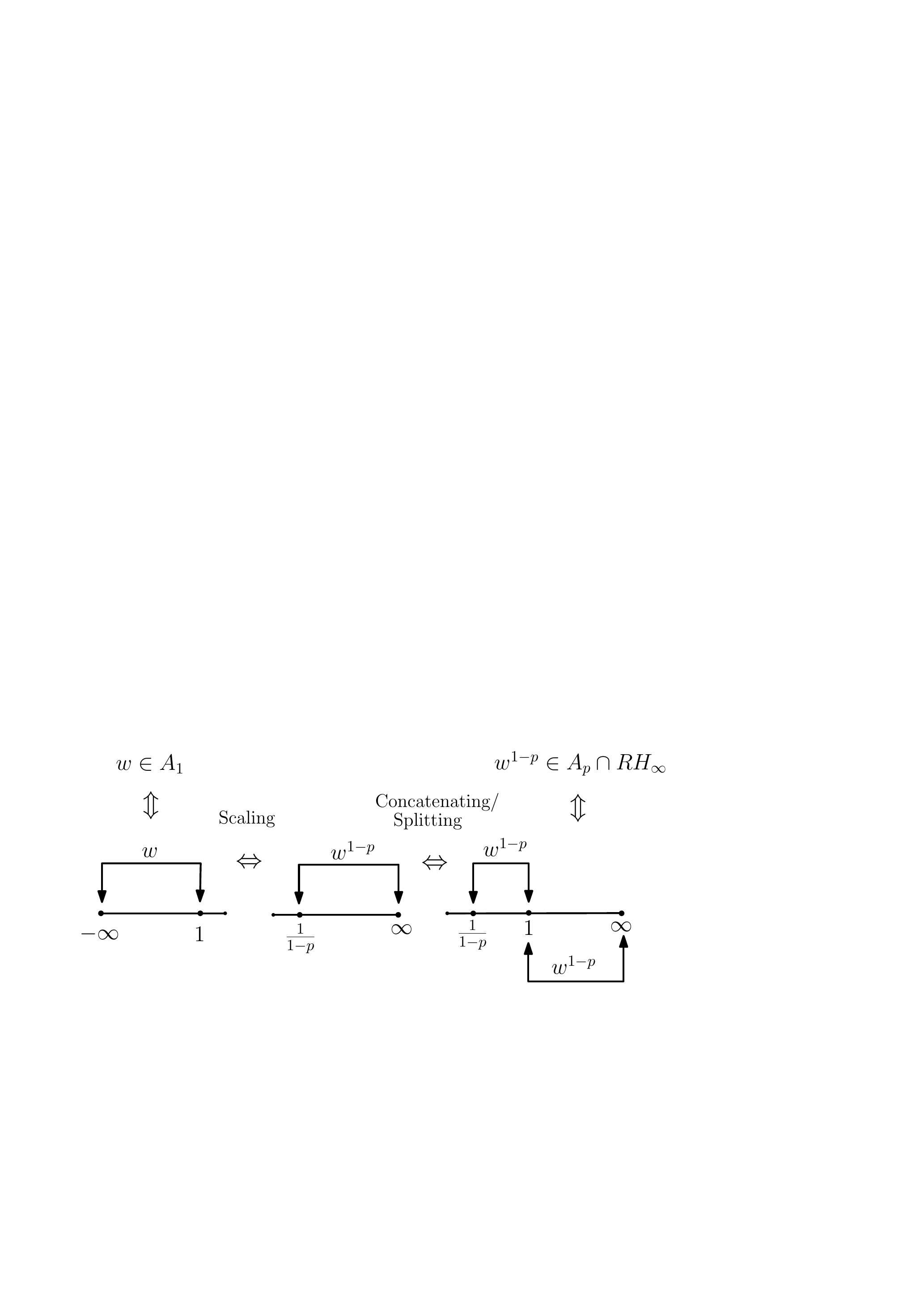}
   \caption{Visual proof of Corollary \ref{1inp}}
   \label{fig:11pp}
\end{figure}
Again, by \thmref{rcmain} and since shrinkage does not worsen reversal constants, for the left-to-right sequence of steps in Figure \ref{fig:11pp}, we have: scaling (with $\theta = 1-p <0$), which yields $[w^{1-p}]_{RC(1/(1-p), \infty)}= [w]_{A_1}^{p-1} $, followed by splitting, which then yields  
$$
\max\{[w^{1-p}]_{A_p}, [w^{1-p}]_{RH_\infty} \} \leq [w]_{A_1}^{p-1}.
$$

On the other hand, now considering the right-to-left sequence of steps, we have: concatenation, which yields $[w^{1-p}]_{RC(1/(1-p), \infty)}\leq [w^{1-p}]_{A_p} [w^{1-p}]_{RH_\infty}$,  followed by scaling (with $\theta = 1/(1-p) <0$), which yields $[w]_{A_1} \leq ([w^{1-p}]_{A_p} [w^{1-p}]_{RH_\infty} )^\frac{1}{p-1}.$
\end{proof}

\begin{cor}\label{1andrhsinrc1minusqinfty} Fix $1 < s < \infty$.
If $w\in A_1\cap RH_s$, then $w^{1-p}\in  A_q \cap RH_\infty$ for every $1< p < \infty$ and $q>(p-1)/s+1$. Moreover, $\max\{[w^{1-p}]_{A_q},[w^{1-p}]_{RH_\infty}\} \leq  ([w]_{A_1}[w]_{RH_s})^{p-1}.$
\end{cor}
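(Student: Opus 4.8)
The plan is to combine the scaling, splitting, and concatenation properties exactly as in the visual proofs of Corollaries \ref{pinq}, \ref{1inp}, and \ref{1andrhsinrc1minusqinfty}. First I would translate the hypothesis $w \in A_1 \cap RH_s$ into the language of reverse classes: $w \in RC(-\infty, 1, [w]_{A_1})$ and $w \in RC(1, s, [w]_{RH_s})$. By the concatenation property (Lemma \ref{freelemmaa}), these glue to give $w \in RC(-\infty, s, [w]_{A_1}[w]_{RH_s})$.

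Next I would apply the scaling property (Theorem \ref{rcmain}\eqref{rcmain2}) with the negative exponent $\theta = 1-p < 0$. Since $\theta < 0$, the endpoints swap and the interval $RC(-\infty, s)$ becomes $RC(s/(1-p), -\infty/(1-p)) = RC(\frac{s}{1-p}, \infty)$, noting $-\infty/(1-p) = \infty$ because $1-p<0$. Thus $w^{1-p} \in RC\!\left(\frac{s}{1-p}, \infty, ([w]_{A_1}[w]_{RH_s})^{p-1}\right)$. Now I must check the arithmetic relating $\frac{s}{1-p}$ to the target exponent $\frac{1}{1-q}$: the condition $q > (p-1)/s + 1$ is exactly equivalent to $q - 1 > (p-1)/s$, i.e. $\frac{s}{p-1} > \frac{1}{q-1}$, i.e. (multiplying by $-1$ and flipping, with both sides negative) $\frac{s}{1-p} < \frac{1}{1-q}$. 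So $\frac{s}{1-p} \le \frac{1}{1-q} < 1 < \infty$, and by the shrinking property (Lemma \ref{lemmashrink}) we may shrink the left endpoint from $\frac{s}{1-p}$ up to $\frac{1}{1-q}$ without worsening the reversal constant, obtaining $w^{1-p} \in RC\!\left(\frac{1}{1-q}, \infty, ([w]_{A_1}[w]_{RH_s})^{p-1}\right)$.

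Finally, I would invoke the splitting property (the subsection after Lemma \ref{lemmashrink}) at the intermediate exponent $1$, since $\frac{1}{1-q} < 1 < \infty$: this yields $w^{1-p} \in RC(\frac{1}{1-q}, 1) \cap RC(1, \infty) = A_q \cap RH_\infty$ with $\max\{[w^{1-p}]_{A_q}, [w^{1-p}]_{RH_\infty}\} \le [w^{1-p}]_{RC(1/(1-q), \infty)} \le ([w]_{A_1}[w]_{RH_s})^{p-1}$, which is the claim. I would include a figure analogous to Figure \ref{fig:11pp} showing the four steps: concatenate, scale by $1-p$, shrink, split.

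The only genuine obstacle is bookkeeping the exponent inequalities through a negative scaling, since taking negative powers reverses order; the cleanest way to present it is to first record the elementary equivalence $q > (p-1)/s + 1 \iff \frac{s}{1-p} \le \frac{1}{1-q}$ (for $1<p<\infty$) as a one-line remark before the main chain of inclusions, so the shrinking step is visibly justified. Everything else is a routine application of the three axioms, with the reversal constants tracked automatically as flagged in Remark \ref{rem:qq}.
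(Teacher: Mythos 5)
Your proposal is correct and follows exactly the paper's own argument: concatenate $A_1=RC(-\infty,1)$ with $RH_s=RC(1,s)$ to get $RC(-\infty,s)$, scale by $\theta=1-p<0$ to land in $RC\bigl(\tfrac{s}{1-p},\infty\bigr)$ with constant $([w]_{A_1}[w]_{RH_s})^{p-1}$, shrink using $\tfrac{s}{1-p}<\tfrac{1}{1-q}$ (which is precisely the index relation the paper records), and split at the exponent $1$. The constant bookkeeping also matches the paper's, so there is nothing to add.
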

\begin{proof} Let us start with the visual proof, which is immediate after realizing that the relation between the indices can be recast as $\frac{s}{1-p} < \frac{1}{1-q}$. We have

\begin{figure}[H] 
   \centering
   \includegraphics[width=5in]{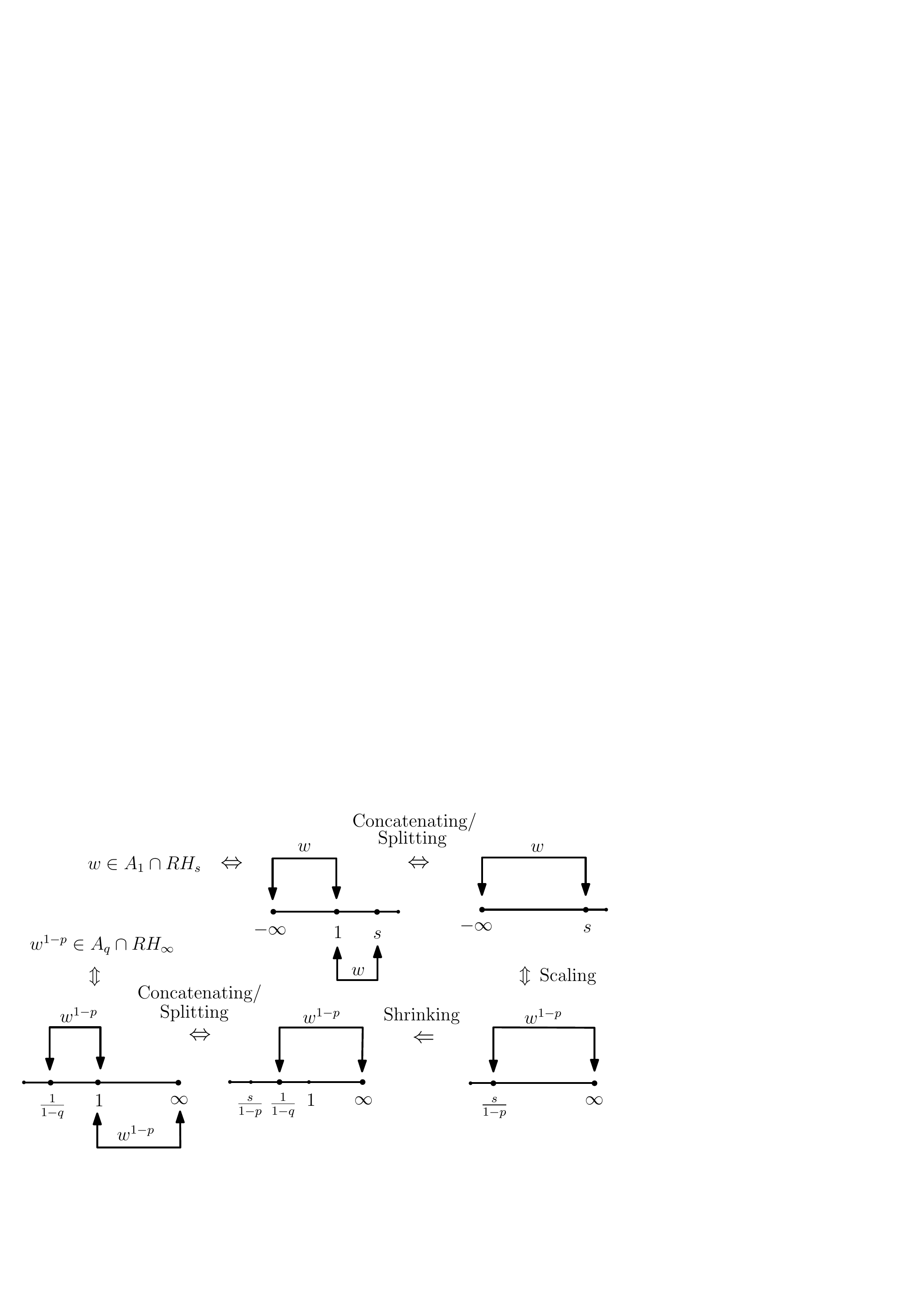}
   \caption{Visual proof of Corollary \ref{1andrhsinrc1minusqinfty}}
   \label{fig:1s1pp}
\end{figure}
For the clockwise sequence of steps in Figure \ref{fig:1s1pp} we have: first concatenation, which yields $[w]_{RC(-\infty,s)}  \leq [w]_{A_1}[w]_{RH_s} $, followed by scaling (with $\theta = 1-p$), yielding $[w^{1-p}]_{RC(s/(1-p), \infty)} \leq ([w]_{A_1}[w]_{RH_s})^{p-1}$, followed by shrinking, which gives $ [w^{1-p}]_{RC(1/(1-q), \infty)} \leq ([w]_{A_1}[w]_{RH_s})^{p-1}$, and finally, splitting gives 
$$
\max\{[w^{1-p}]_{A_q},[w^{1-p}]_{RH_\infty}\}  \leq  ([w]_{A_1}[w]_{RH_s})^{p-1}.
$$
\end{proof}

\begin{cor}\label{rhinfandpin1} Fix $1 < p  < \infty$. Then,  $w\in A_p \cap RH_\infty$ implies that $w^{1-p'}\in A_1$, with $[w^{1-p'}]_{A_1} \leq ([w]_{RH_\infty}[w]_{A_p})^{p'-1}$. Conversely, $w^{1-p'}\in A_1$ implies $w\in  A_p \cap RH_\infty$, with $\max\{[w]_{RH_\infty}, [w]_{A_p}\} \leq [w^{1-p'}]_{A_1}^{p-1}.$
\end{cor}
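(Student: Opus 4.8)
The plan is to mirror the structure of the proof of Corollary \ref{1inp}, simply running the scaling and concatenation/splitting steps with the roles of the exponents adjusted. Recall that $A_p = RC((1-p)^{-1},1)$, $RH_\infty = RC(1,\infty)$, and $A_1 = RC(-\infty,1)$, and that $p'$ is defined by $(1-p)(1-p') = 1$, so that $1/(1-p) = 1-p'$ and $1/(1-p') = 1-p$. The key identity $1/(1-p') = 1-p$ is what makes the exponents match up after scaling.

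First I would establish the forward implication. Starting from $w \in A_p \cap RH_\infty = RC((1-p)^{-1},1) \cap RC(1,\infty)$, apply the concatenation property (Lemma \ref{freelemmaa}) to obtain $w \in RC((1-p)^{-1},\infty)$ with $[w]_{RC((1-p)^{-1},\infty)} \leq [w]_{A_p}[w]_{RH_\infty}$. Then scale by $\theta = 1-p' < 0$: by part \eqref{rcmain2} of \thmref{rcmain}, since $\theta < 0$ the exponents flip, $w^{1-p'} \in RC(\infty \cdot (1-p'), (1-p)^{-1}(1-p'))$; using $\infty \cdot (1-p') = -\infty$ (as $1-p' < 0$) and $(1-p)^{-1}(1-p') = (1-p')/(1-p) = \dots$ — more cleanly, $(1-p)^{-1} = 1-p'$ so $(1-p)^{-1}\cdot(1-p') = (1-p')^2$; it is easier to scale the concatenated arrow $RC((1-p)^{-1},\infty) = RC(1-p',\infty)$, which under $\theta = 1-p'$ gives $RC(\infty/(1-p'),\, (1-p')/(1-p')) = RC(-\infty, 1) = A_1$, with reversal constant raised to the power $|\theta| = p'-1$. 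This yields $[w^{1-p'}]_{A_1} \leq ([w]_{RH_\infty}[w]_{A_p})^{p'-1}$, as claimed.

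For the converse, I would reverse the steps: starting from $w^{1-p'} \in A_1 = RC(-\infty,1)$, scale by $\theta = 1-p < 0$ (note $(w^{1-p'})^{1-p} = w^{(1-p')(1-p)} = w$ by the key identity), so by \thmref{rcmain}\eqref{rcmain2} the exponents flip and $w \in RC(1/(1-p),\, -\infty/(1-p)) = RC((1-p)^{-1}, \infty)$, with $[w]_{RC((1-p)^{-1},\infty)} \leq [w^{1-p'}]_{A_1}^{|1-p|} = [w^{1-p'}]_{A_1}^{p-1}$. Then apply the splitting property at the intermediate exponent $1$, using $(1-p)^{-1} < 1 < \infty$, to get $w \in RC((1-p)^{-1},1) \cap RC(1,\infty) = A_p \cap RH_\infty$ with $\max\{[w]_{A_p},[w]_{RH_\infty}\} \leq [w]_{RC((1-p)^{-1},\infty)} \leq [w^{1-p'}]_{A_1}^{p-1}$.

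I do not anticipate a genuine obstacle here — the corollary is a direct consequence of the three axioms exactly as in the preceding corollaries, and the entire content is a visual diagram (a single arrow $RC((1-p)^{-1},\infty)$ being split at $1$ on one side and scaled to $A_1$ on the other). The only point requiring care is bookkeeping: keeping track of which scaling exponent ($1-p$ versus $1-p'$) is used in which direction, verifying $1/(1-p) = 1-p'$ so that the scaled exponents land exactly on $-\infty$, $1$, and $\infty$, and confirming that $|\theta| = p'-1$ in the forward direction and $|\theta| = p-1$ in the converse. I would present the argument with a figure analogous to Figure \ref{fig:11pp}, describing the left-to-right steps (concatenation, then scaling with $\theta = 1-p'$) and the right-to-left steps (scaling with $\theta = 1-p$, then splitting).
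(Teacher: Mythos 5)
Your proposal is correct and follows essentially the same route as the paper: concatenate $A_p=RC((1-p)^{-1},1)$ with $RH_\infty=RC(1,\infty)$ and scale by $\theta=1-p'$ for the forward implication, then scale by $\theta=1-p$ and split at the exponent $1$ for the converse, with the same constant bookkeeping. The minor initial fumble with the exponent arithmetic is resolved correctly in your own text, so no changes are needed.
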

\begin{proof} Figure \ref{fig:rhinfandpin1} illustrates the visual proof
\begin{figure}[H] 
   \centering
   \includegraphics[width=5in]{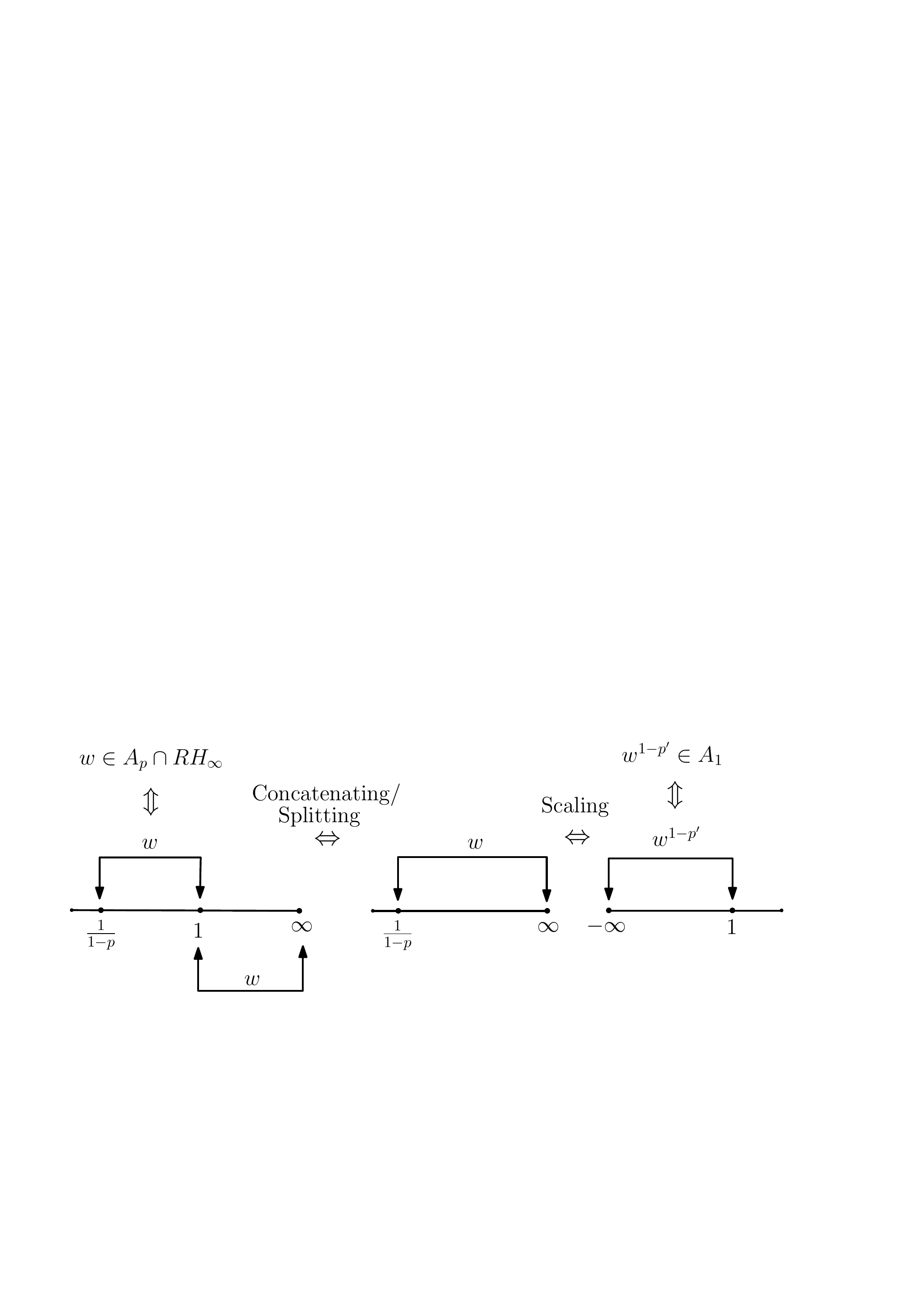}
   \caption{Visual proof of Corollary \ref{rhinfandpin1}}
   \label{fig:rhinfandpin1}
\end{figure}
The proof here closely follows the proof of Corollary \ref{1inp}  along with the fact that $(1-p)(1-p')=1$. For the left-to-right sequence of steps in Figure \ref{fig:rhinfandpin1} we have: concatenation, which yields $[w]_{RH(1/(1-p), \infty)} \leq [w]_{A_p}[w]_{RH_\infty}$ and then scaling by $1-p' < 0$, which gives $[w^{1-p'}]_{A_1} \leq ([w]_{A_p}[w]_{RH_\infty})^{p'-1}$.

For the right-to-left sequence of steps in Figure \ref{fig:rhinfandpin1} we have: scaling by $\frac{1}{1-p'}=1-p$, which yields $[w]_{RH(1/(1-p), \infty)} = [w^{1-p'}]_{A_1}^{p-1}$ and then splitting, which gives
$$
 \max\{[w]_{RH_\infty}, [w]_{A_p}\} \leq [w^{1-p'}]_{A_1}^{p-1}.
$$
\end{proof}

\begin{cor}\label{pandrhsinq} Let $1 < p, s < \infty$ and set  $q:=s(p-1)+1$. Then, $w\in A_p\cap RH_s$ implies $w^s\in A_q,$ with $[w^s]_{A_q} \leq ([w]_{A_p}[w]_{RH_s})^{s}$. Conversely,  $w^s\in A_q$  implies  $w\in A_p\cap RH_s$, with $\max\{ [w]_{A_p},[w]_{RH_s}\} \leq [w^s]_{A_q}^{1/s}.$
\end{cor}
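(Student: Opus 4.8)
The plan is to derive Corollary~\ref{pandrhsinq} from the visual formalism exactly in the spirit of the preceding corollaries, using only concatenation, scaling, and splitting together with the dictionary $A_p = RC((1-p)^{-1}, 1)$ and $RH_s = RC(1,s)$. The key arithmetic identity to keep in mind is that with $q := s(p-1)+1$ we have $\frac{1}{1-q} = \frac{1}{-s(p-1)} = \frac{1}{s}\cdot\frac{1}{1-p}$, and likewise $\frac{s}{1} = s$; in other words, scaling the pair of exponents $\bigl(\tfrac{1}{1-p},\, s\bigr)$ by the factor $\theta = s > 0$ produces exactly the pair $\bigl(\tfrac{1}{1-q},\, s^2/... \bigr)$ — more precisely, scaling by $\theta = s$ sends $\tfrac{1}{1-p} \mapsto \tfrac{1}{s(1-p)} = \tfrac{1}{1-q}$ and sends $s \mapsto 1$ only if we first arrange the right endpoint to be $s$... so the cleaner route is: scale by $\theta = s$ the interval $[\tfrac{1}{1-p},\,1]$ from $A_p$ is not quite it either. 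The honest bookkeeping is that $w \in A_p \cap RH_s$ means $w \in RC(\tfrac{1}{1-p}, 1) \cap RC(1, s) = RC(\tfrac{1}{1-p}, s)$ by concatenation, with reversal constant at most $[w]_{A_p}[w]_{RH_s}$; then scaling by $\theta = s > 0$ gives $w^s \in RC(\tfrac{1}{s(1-p)}, 1)$ with reversal constant at most $([w]_{A_p}[w]_{RH_s})^s$, and $\tfrac{1}{s(1-p)} = \tfrac{1}{1-q}$, i.e. $w^s \in A_q$ with $[w^s]_{A_q} \le ([w]_{A_p}[w]_{RH_s})^s$.

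First I would state the visual picture: start with the two solid arrows for $A_p$ (from $(1-p)^{-1}$ to $1$) and $RH_s$ (from $1$ to $s$); concatenate them across the shared endpoint $1$ to get a single arrow $RC((1-p)^{-1}, s)$, invoking Lemma~\ref{freelemmaa} for the bound on the reversal constant. Then apply the scaling property, Theorem~\ref{rcmain}\eqref{rcmain1}, with $\theta = s > 0$: the endpoints $(1-p)^{-1}$ and $s$ get divided by $s$, landing at $(s(1-p))^{-1} = (1-q)^{-1}$ and at $1$ respectively, and the reversal constant gets raised to the power $s$. Reading off the definition of $A_q$ gives the forward implication together with $[w^s]_{A_q} \le ([w]_{A_p}[w]_{RH_s})^s$.

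For the converse, I would run the same diagram backwards. Assume $w^s \in A_q = RC((1-q)^{-1}, 1)$. Apply scaling with $\theta = 1/s > 0$ (equivalently, note $(w^s)^{1/s} = w$): the endpoints $(1-q)^{-1}$ and $1$ get multiplied by $s$, returning to $(1-p)^{-1}$ and $s$, and the reversal constant gets raised to the power $1/s$; so $w \in RC((1-p)^{-1}, s)$ with $[w]_{RC((1-p)^{-1}, s)} \le [w^s]_{A_q}^{1/s}$. Finally apply the splitting property (the corollary following Lemma~\ref{lemmashrink}) at the intermediate exponent $1$, which sits strictly between $(1-p)^{-1} < 0 < 1 < s$, to obtain $w \in RC((1-p)^{-1}, 1) \cap RC(1, s) = A_p \cap RH_s$ with $\max\{[w]_{A_p}, [w]_{RH_s}\} \le [w]_{RC((1-p)^{-1},s)} \le [w^s]_{A_q}^{1/s}$.

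The only genuine point requiring care — and hence the main obstacle, though it is minor — is the index arithmetic: one must check that $q := s(p-1)+1$ really does satisfy $(1-q)^{-1} = s^{-1}(1-p)^{-1}$ and that $q \in (1,\infty)$ so that $A_q$ is defined by the stated formula (indeed $q > 1$ since $s > 1$ and $p > 1$), and that $1$ genuinely lies strictly between $(1-p)^{-1}$ and $s$ so that both concatenation and splitting apply at that exponent. All the quantitative control on reversal constants is automatic from Remark~\ref{rem:qq} and the explicit constants in Lemma~\ref{freelemmaa} and Theorem~\ref{rcmain}. I would accompany the argument with a figure showing the $A_p$ and $RH_s$ arrows being concatenated and then rescaled by $\theta = s$ onto the $A_q$ arrow, mirroring Figures~\ref{fig:pinq}--\ref{fig:rhinfandpin1}.
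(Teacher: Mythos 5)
Your argument is correct and coincides with the paper's own proof: concatenation of the $A_p$ and $RH_s$ arrows at the exponent $1$ followed by scaling with $\theta = s$ for the forward implication, and scaling by $1/s$ followed by splitting at $1$ for the converse, with the same control $[w^s]_{A_q} \leq ([w]_{A_p}[w]_{RH_s})^s$ and $\max\{[w]_{A_p},[w]_{RH_s}\} \leq [w^s]_{A_q}^{1/s}$ on the reversal constants. The index identity $(1-q)^{-1} = s^{-1}(1-p)^{-1}$ that you verify is exactly what the paper's Figure~\ref{fig:pandrhsinq} encodes, so no further changes are needed.
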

\begin{proof}  Figure \ref{fig:pandrhsinq} illustrates the visual proof
\begin{figure}[H] 
   \centering
   \includegraphics[width=5.2in]{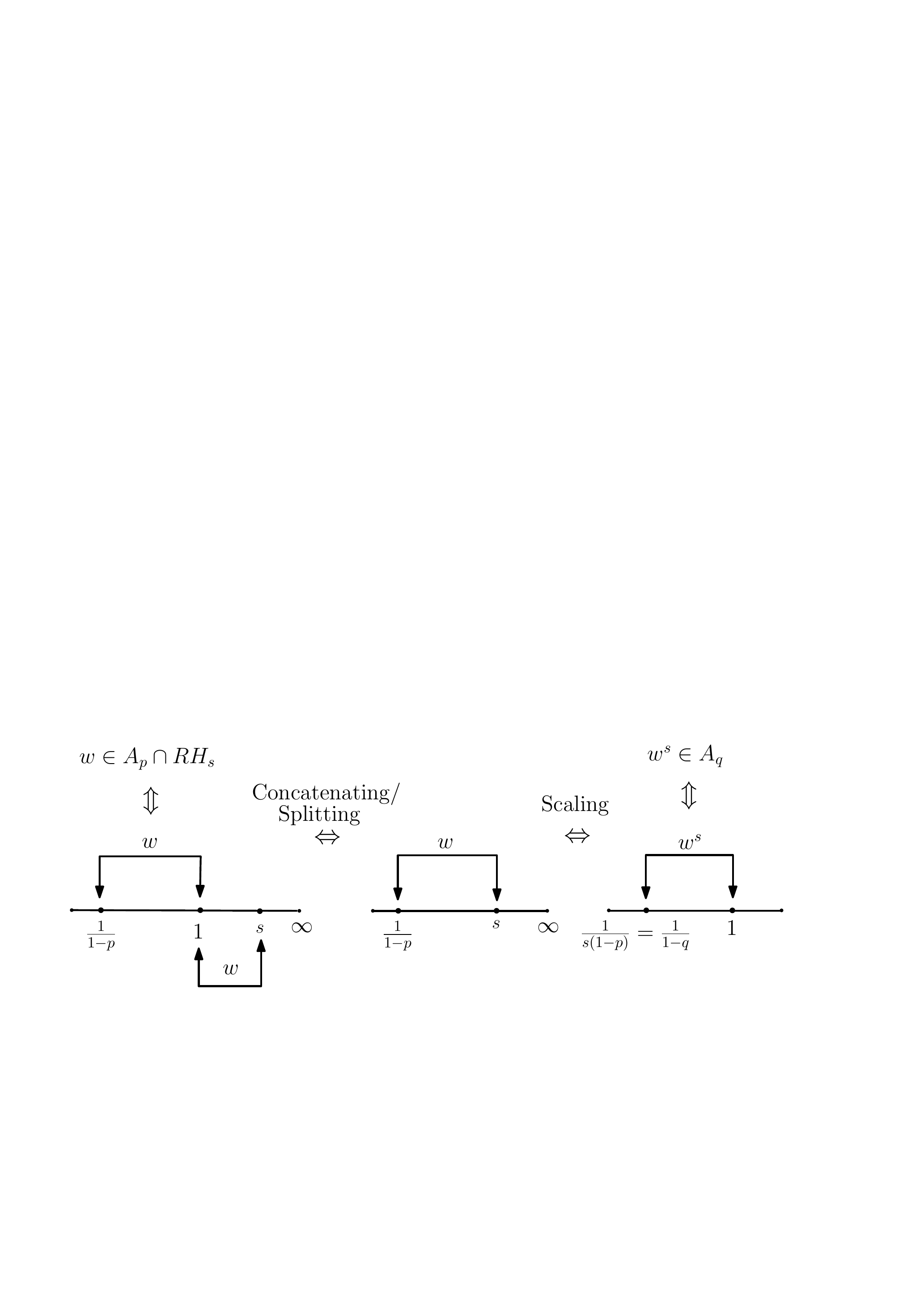}
   \caption{Visual proof of Corollary \ref{pandrhsinq}}
   \label{fig:pandrhsinq}
\end{figure}
In Figure \ref{fig:pandrhsinq}, from left to right, we have: concatenation, which gives\\ $[w]_{RC(1/(1-p), s} \leq [w]_{A_p} [w]_{RH_s}$, followed by scaling by $s$, which gives\\
 $[w^s]_{A_q}  \leq ([w]_{A_p} [w]_{RH_s})^s$. From right to left we have: scaling by $1/s$, yielding $[w]_{RC(1/(1-p),s)} = [w^s]_{A_q}^{1/s}$, followed by splitting, which gives  $\max\{ [w]_{A_p},[w]_{RH_s}\} \leq [w^s]_{A_q}^{1/s}.$
\end{proof}

\section{Self-improving properties}\label{secc:selfimprove}

\begin{dfn} A reverse class $RC(r,s,C_1)$ is said to have the \emph{right self-improving
property} if for every $w\in RC(r,s,C_1)$ there exists $\varepsilon,
C_2>0$ such that $w\in RC(r,s+\varepsilon,C_2).$ 

A reverse class $RC(r,s,C_1)$ is said to have the \emph{left self-improving property}
if for every $w\in RC(r,s,C_1)$ there exists $\eps, C_2>0$ such that
$w\in RC(r-\eps,s,C_2).$ 

These self-improving properties are visually represented in Figure \ref{improverc}.
\begin{figure}[H] 
   \centering
   \includegraphics[width=5.2in]{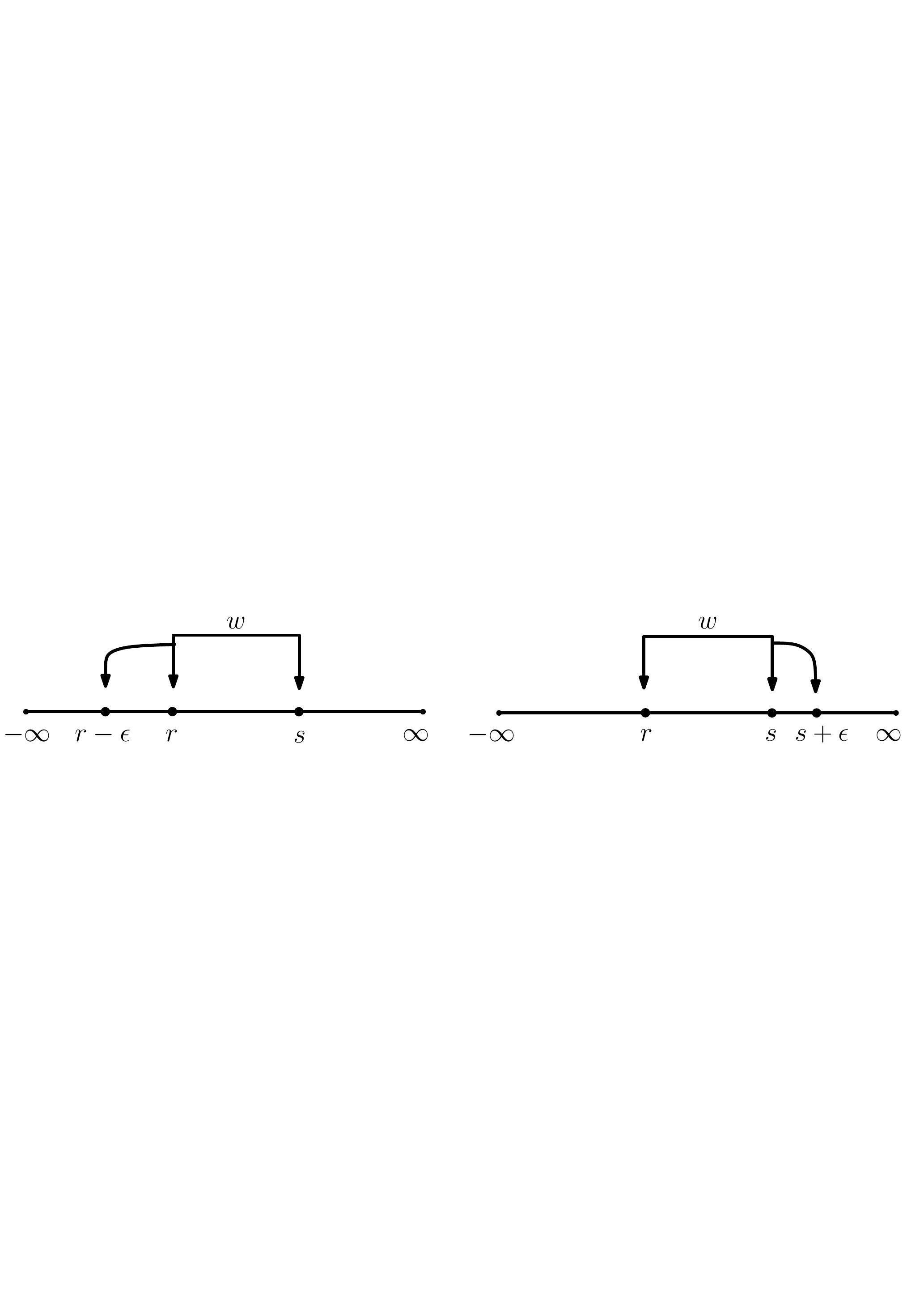}
   \caption{Left and right self-improving properties. The corresponding concatenations are tacitly understood. That is, the diagrams also indicate that $w \in RC(r-\eps, r) \cap RC(r,s) = RC(r-\eps, s)$ and $w \in RC(r,s) \cap RC(s, s+\eps) = RC(r,s+\eps)$, respectively.}
   \label{improverc}
\end{figure}
\end{dfn}

In the next theorem we collect some well-known facts concerning self-improving properties for $A_p$ and reverse-H\"older classes.

\begin{theor}\label{ainftyasap} Fix $1 < p, s < \infty$.
\begin{enumerate}[(i)]
\item\label{rhdelta} For every $w \in A_p$ there exists $\delta > 0$, depending on geometric constants, $p$, and $[w]_{A_p}$, such that $w \in RH_{1+\delta}$.
\item\label{openeps} For every $w \in A_p$ there exists $\eps > 0$, depending on geometric constants, $p$, and $[w]_{A_p}$, such that $w \in A_{p-\eps}$.
\item\label{RHs=>Aq} For every $w \in RH_s$ there exists $1 < q < \infty$, depending on geometric constants, s, and $[w]_{RH_s}$, such that $w \in A_{q}$.
\item\label{Ainf=>Ar} For every $w \in A_\infty$ there exists $1 < r < \infty$, depending on geometric constants and $[w]_{A_\infty}$, such that $w \in A_{r}$.
\end{enumerate}
\end{theor}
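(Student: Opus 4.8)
The plan is to prove the four self-improving statements by reducing them to a single master fact: the classical reverse-H\"older inequality for $A_\infty$ (equivalently, for the class $RC(0,1)$), which in spaces of homogeneous type follows from a Calder\'on--Zygmund decomposition adapted to the doubling quasi-metric structure. Concretely, the only hard analytic input I would establish (or cite from \cite{CW77, MS79, StrTor}) is: if $w \in RC(0,1,C)$ then there exist $\delta > 0$ and $C' > 0$, depending only on geometric constants and $C$, such that $w \in RC(1, 1+\delta, C')$. Everything else will be pure bookkeeping with the three axioms (shrinking, concatenation, scaling) from Section~\ref{secprop}, exactly in the spirit of the ``practice'' corollaries.

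The ordering of the proof would be as follows. First I would prove \eqref{rhdelta}: given $w \in A_p = RC((1-p)^{-1},1,[w]_{A_p})$, the shrinking property (\lemref{lemmashrink}) gives $w \in RC(0,1)$, i.e. $w \in A_\infty$ with $[w]_{A_\infty} \leq [w]_{A_p}$; then the master reverse-H\"older fact yields $\delta > 0$ with $w \in RC(1,1+\delta) = RH_{1+\delta}$. For \eqref{openeps} I would apply \eqref{rhdelta} to the conjugate weight: by \corref{pinq}, $w \in A_p$ iff $\sigma := w^{1-p'} \in A_{p'}$, so $\sigma \in RH_{1+\eta}$ for some $\eta > 0$ depending on $[w]_{A_p}$; scaling $\sigma$ back by the power $1-p < 0$ via \thmref{rcmain} and using the identity $(1-p)(1-p')=1$ turns the improved right endpoint for $\sigma$ into an improved \emph{left} endpoint for $w$, placing $w$ in $RC(r,1)$ for some $r < (1-p)^{-1}$; a short computation identifies this $RC(r,1)$ with $A_{p-\eps}$ for a suitable $\eps > 0$. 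For \eqref{RHs=>Aq}, starting from $w \in RH_s = RC(1,s)$, shrinking gives $w \in RC(1,1+\delta_0)$ for any small $\delta_0$ — but that is the wrong direction; instead I would use the known fact (again a Calder\'on--Zygmund / good-$\lambda$ argument, or cite it) that $RC(1,s)$ has the \emph{left} self-improving property: $w \in RC(1-\eps, s)$ for some $\eps > 0$, hence $w \in RC(1-\eps, 1) = A_p$ with $p = 1 + \frac{\eps}{1-\eps}$ by the index bookkeeping, using the concatenation/splitting axioms to isolate the $RC(1-\eps,1)$ piece. Finally \eqref{Ainf=>Ar} is immediate from \eqref{rhdelta} once one observes $A_\infty = RC(0,1)$: the master fact gives $w \in RC(1,1+\delta)$, and then $w \in RC(0,1) \cap RC(1,1+\delta) = RC(0,1+\delta)$ by concatenation; a further application of the left self-improving property of reverse-H\"older classes (or, more simply, a direct argument that $RC(0,1+\delta)$ forces some $RC(1-\eps,1)$) yields $w \in A_r$ for $r = r([w]_{A_\infty}, \text{geom})$.

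The main obstacle is the master analytic fact — the self-improving (higher integrability) inequality itself — since the axioms of the visual formalism are, by design, incapable of \emph{stretching} arrows, and stretching is precisely what all four statements require. In the Euclidean case this is the classical Gehring-type lemma / reverse-H\"older inequality for $A_\infty$ weights; in a general space of homogeneous type one must redo it using the Calder\'on--Zygmund decomposition built from the dyadic-type cubes of Christ (or the Vitali-type covering of \cite{MS79}), being careful that the doubling constant $C_\mu$ and quasi-triangle constant $K$ are the only geometric data entering the resulting $\delta$ and $C'$. I expect the cleanest exposition is to state this reverse-H\"older inequality as a single lemma (with a proof sketch or a pointer to \cite[Chapter I]{StrTor} and \cite{CW77}), and then present all of (i)--(iv) as one-line or two-line diagram manipulations, emphasizing via \remref{rem:qq} that each step carries explicit, geometric-constant-controlled bounds on the reversal constants. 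One point requiring care: statements (i)--(iv) assert the improvement constants depend on $[w]$ in the respective class, so when passing to conjugate weights in (ii) I must track that $[w^{1-p'}]_{A_{p'}} = [w]_{A_p}^{p'-1}$ (from \corref{pinq}) so the dependence remains on $[w]_{A_p}$ and $p$ alone, as claimed.
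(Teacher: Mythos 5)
Parts \eqref{rhdelta} and \eqref{openeps} of your plan are sound: reducing \eqref{rhdelta} to the reverse-H\"older inequality for $RC(0,1)$ weights, and then obtaining \eqref{openeps} by applying that to the conjugate weight $w^{1-p'}\in A_{p'}$ and scaling back by $1-p$ (plus one concatenation with $w\in RC(\frac{1}{1-p},1)$, which you elide but which is immediate), is the standard argument, and your constant tracking via Corollary \ref{pinq} is correct. For comparison, the paper itself does not prove Theorem \ref{ainftyasap}; it cites \cite{CF74}, \cite{Duo}, \cite{Grafakos}, \cite{St}, and \cite{HPR}, so any genuine proof already goes beyond the text.

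The gap is in \eqref{RHs=>Aq} and \eqref{Ainf=>Ar}. You claim that a left self-improvement $w\in RC(1-\eps,s)$ gives $w\in RC(1-\eps,1)=A_p$ with $p=1+\frac{\eps}{1-\eps}$, and later that ``$RC(0,1+\delta)$ forces some $RC(1-\eps,1)$'' and hence some $A_r$. But $RC(1-\eps,1)$ is not a Muckenhoupt class: for $1<q<\infty$ one has $A_q=RC(\frac{1}{1-q},1)$ with left exponent $\frac{1}{1-q}<0$, whereas $1-\eps>0$ (and indeed $RC(0,1+\delta)\subset RC(1-\eps,1)$ holds trivially by shrinking, so that inclusion carries no information). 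What \eqref{RHs=>Aq} and \eqref{Ainf=>Ar} actually assert is that the left endpoint leaps across $0$ to a negative exponent --- precisely the point emphasized in the caption of Figure \ref{fig:selfimprov} --- and this cannot be extracted from your single master fact (a right-endpoint improvement) together with the three axioms, which only shrink, concatenate, and scale; scaling by a negative power would require prior reverse information about $w^{-1}$ that you do not have. So either you cite the crossing-zero statement itself (Coifman--Fefferman: $RH_s\subset A_q$, equivalently $A_\infty=\bigcup_{p}A_p$), in which case nothing in (iii)--(iv) is being proved beyond what the paper already cites, or you must supply a second hard analytic input --- a good-$\lambda$/Calder\'on--Zygmund argument, or the John--Nirenberg route underlying Proposition \ref{bmobloap} and Exercise \ref{RCcrosszero} --- showing that a weight in $RC(1,s)$, or in $RC(0,1)$, satisfies a reverse inequality with some strictly negative lower exponent. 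As written, the index bookkeeping that closes (iii) and (iv) is false, and the opening claim that one master lemma plus the axioms suffices does not hold.
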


Theorem \ref{ainftyasap} illustrates the close connection between the reverse inequalities defining $A_p$ and the reverse H\"older inequalities defining $RH_s$. This connection was first proved by Coifman and Fefferman in \cite{CF74}.  For proofs of Theorem \ref{ainftyasap}, see for instance, \cite{CF74}, \cite[Chapter 7]{Duo}, \cite[Chapter 9]{Grafakos}, and \cite[Chapter 5]{St}. Precise estimates for $\delta > 0$ in \eqref{rhdelta} and $\eps > 0$ in \eqref{openeps}, in the context of spaces of homogeneous type, can be found in \cite{HPR}. Sharp reverse H\"older inequalities for $A_\infty$ weights in $\rn$ can be found in \cite{HP, HPR}.

We can now illustrate the contents of Theorem \ref{ainftyasap} as follows

\begin{figure}[H] 
   \centering
   \includegraphics[width=5in]{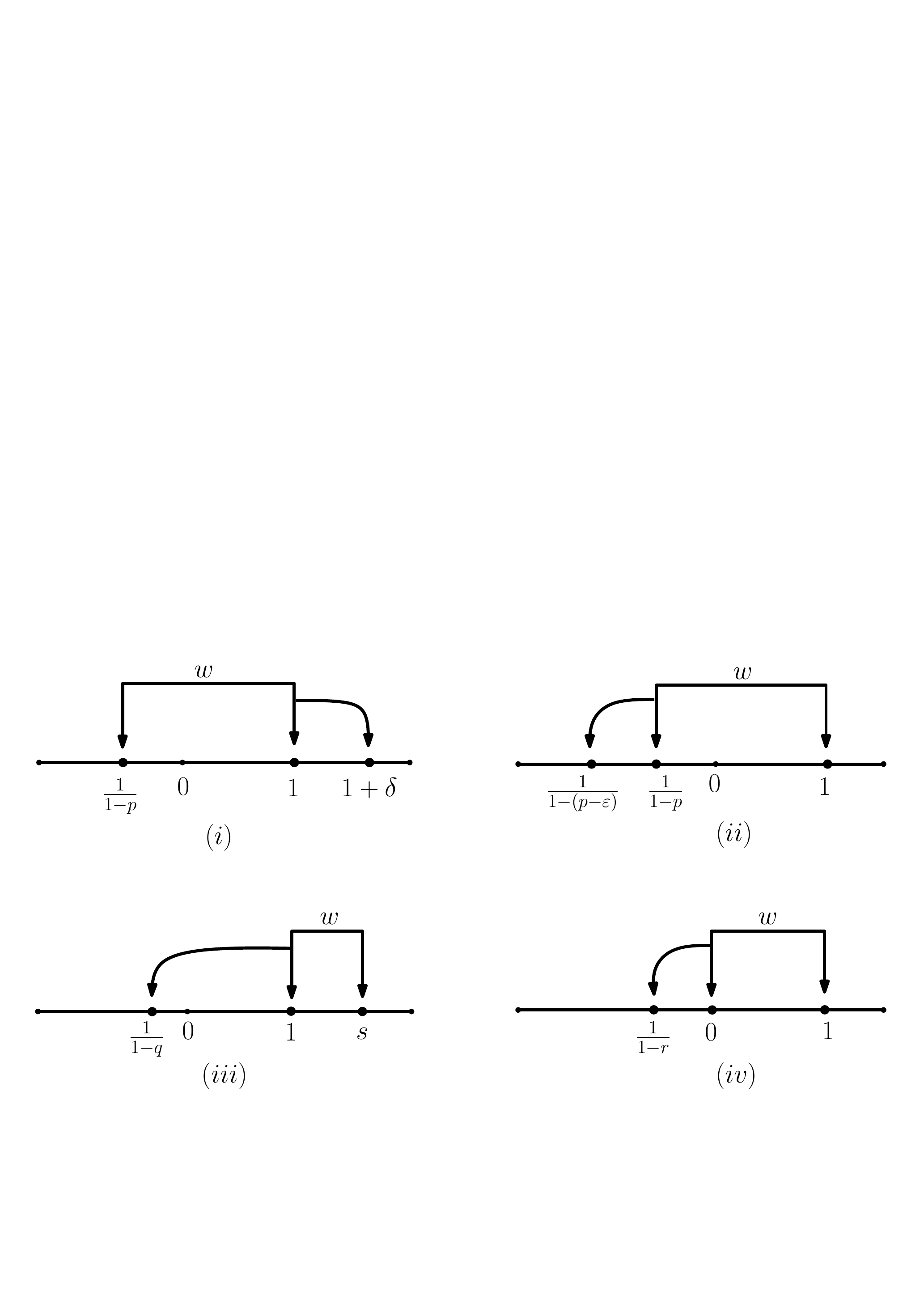}
   \caption{The self-improving properties for the $A_p$ classes as stated in Theorem \ref{ainftyasap}. Notice that the ``improvement leaps'' in parts $(i)$, $(ii)$, and $(iv)$ will typically be small. That is, $\delta$ and $\eps$ will be small and $r$ will be big. However, in the case of $(iii)$, although the index $q$ will also be typically large, the ``improvement leap'' is of length larger than one, crossing from $1$ all the way back to the negative number $\frac{1}{1-q}$.}
   \label{fig:selfimprov}
\end{figure}

In order to keep practicing our visual formalism, let us prove some well-known results using the diagrams for self-improving properties.

\begin{cor}\label{rhsainf} Fix $1 < s < \infty$. If $w \in RH_s$ then $w^s \in A_\infty$.

\end{cor}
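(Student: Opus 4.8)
The plan is to recast the statement in the language of reverse classes and then push it through the visual formalism, feeding in exactly one non-formal fact. Since $RH_s = RC(1,s)$ and $A_\infty = RC(0,1)$, what must be shown is: $w \in RC(1,s)$ implies $w^s \in RC(0,1)$. I would first observe that this is \emph{not} a consequence of the three axioms of \secref{secprop} alone — starting from $w \in RC(1,s)$, shrinking only produces arrows $RC(\tilde r, \tilde s)$ with $1 \le \tilde r < \tilde s \le s$, and then scaling by a positive power $\theta$ (the only route to $w^s$) puts $w^\theta$ in $RC(\tilde r/\theta, \tilde s/\theta)$, whose left endpoint $\tilde r/\theta \ge 1/\theta$ stays strictly positive, so no arrow for $w^s$ can ever reach the exponent $0$. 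Hence a genuine self-improving input is needed.

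That input is \thmref{ainftyasap}\eqref{RHs=>Aq}: since $w \in RH_s$, there is $1 < q < \infty$ — depending on geometric constants, $s$, and $[w]_{RH_s}$ — with $w \in A_q = RC\big(\frac{1}{1-q}, 1\big)$. From here the argument is three short moves. Step 1: concatenate (\lemref{freelemmaa}) the arrows $RC\big(\frac{1}{1-q}, 1\big)$ and $RC(1,s)$ to obtain $w \in RC\big(\frac{1}{1-q}, s\big)$ with $[w]_{RC(1/(1-q),s)} \le [w]_{A_q}[w]_{RH_s}$. Step 2: apply the scaling property (\thmref{rcmain}\eqref{rcmain1}) with $\theta = s > 0$, giving $w^s \in RC\big(\frac{1}{s(1-q)}, 1\big)$ with $[w^s]_{RC(1/(s(1-q)),1)} \le \big([w]_{A_q}[w]_{RH_s}\big)^{s}$. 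Step 3: since $q > 1$ forces $\frac{1}{s(1-q)} < 0 < 1$, shrink (\lemref{lemmashrink}) this arrow down to the $A_\infty$ window, concluding $w^s \in RC(0,1) = A_\infty$ with $[w^s]_{A_\infty} \le \big([w]_{A_q}[w]_{RH_s}\big)^{s}$. Equivalently, once $w \in A_q \cap RH_s$ one may quote \corref{pandrhsinq} to get $w^s \in A_{s(q-1)+1} \subset A_\infty$.

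The main — and really the only — obstacle is the conceptual point flagged in the first paragraph: one has to recognize that this corollary is not reachable by formal manipulation of the $RH_s$ arrow and must therefore import the self-improving passage $RH_s \subset A_q$; this is precisely what supplies an exponent strictly below $0$ once the arrow is stretched by the factor $s$. After that, the proof is pure bookkeeping with the concatenation, scaling, and shrinking axioms, and the reversal constant of $w^s$ in $A_\infty$ comes out explicitly controlled by $[w]_{RH_s}$ (through the $q$ and $[w]_{A_q}$ furnished by \thmref{ainftyasap}).
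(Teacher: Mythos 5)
Your argument is correct and is essentially the paper's own proof: the paper's visual proof also begins by invoking \thmref{ainftyasap}\eqref{RHs=>Aq} to place $w$ in some $A_q$, then concatenates with the $RH_s$ arrow, scales by $s$, and shrinks the resulting arrow (whose left endpoint $\frac{1}{s(1-q)}$ is negative) down to $RC(0,1)=A_\infty$. Your added remark that the three axioms alone cannot produce the conclusion matches the paper's emphasis that the first step must be the self-improving property.
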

\begin{proof} The visual proof is illustrated in Figure \ref{fig:rhsainf}. Notice how the first step uses the self-improving property from Theorem \ref{ainftyasap}  \eqref{RHs=>Aq}.
\begin{figure}[H] 
   \centering
   \includegraphics[width=4.7in]{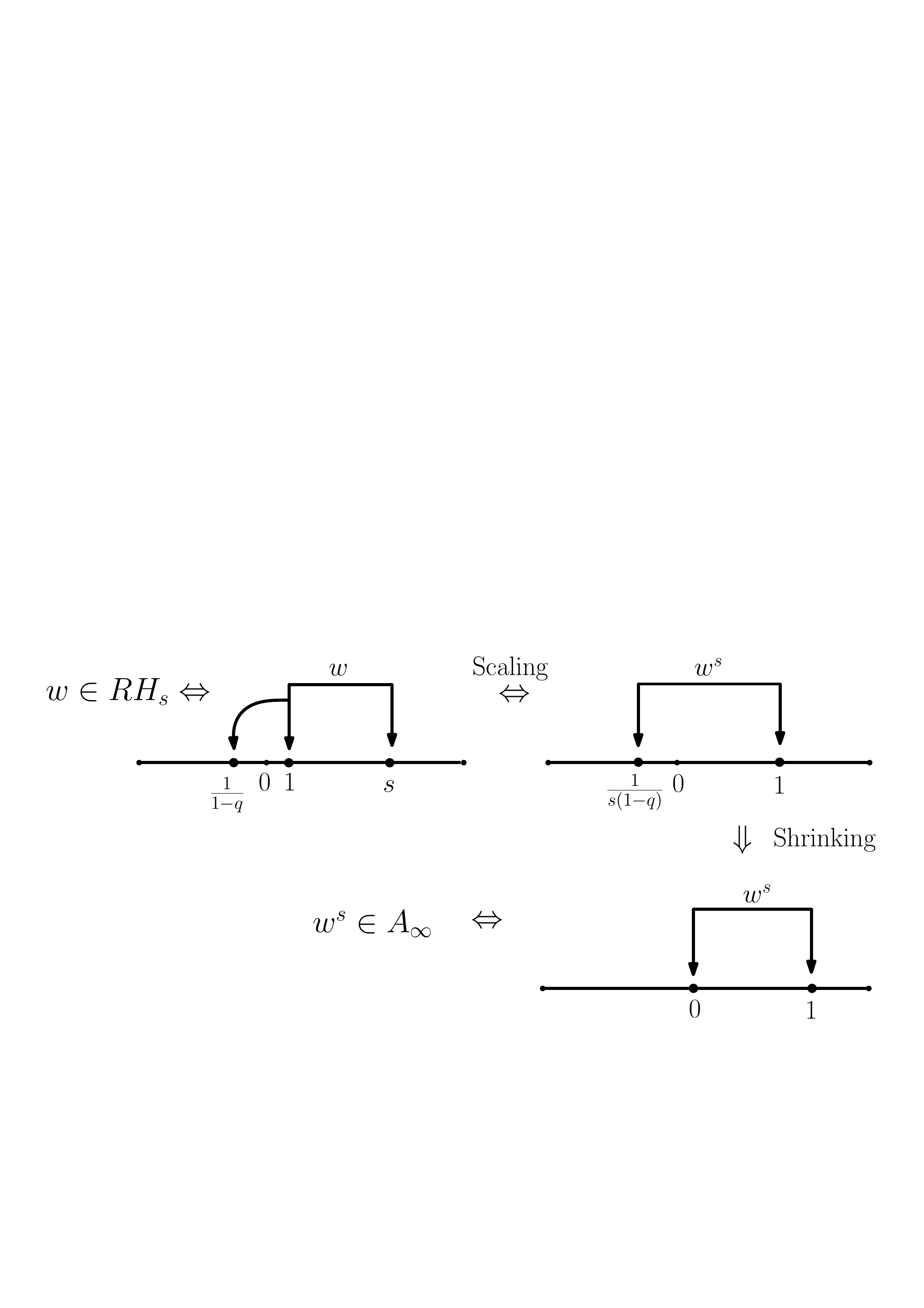}
   \caption{Proof of Corollary \ref{rhsainf}.}
   \label{fig:rhsainf}
\end{figure}
\end{proof}

\begin{cor}\label{rhsrht} Fix $1 < s < \infty$. If $w \in RH_s$, then there exists $t > s$ such that $w \in RC(0,t) \subset RH_t$.

\end{cor}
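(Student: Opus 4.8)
The plan is to combine the reverse-H\"older self-improvement from Theorem~\ref{ainftyasap}~\eqref{RHs=>Aq} with the classical $A_\infty$ reverse-H\"older improvement, phrased in the language of the visual formalism. First I would invoke Theorem~\ref{ainftyasap}~\eqref{RHs=>Aq}: since $w \in RH_s = RC(1,s)$, there exists some $1 < q < \infty$ (depending on geometric constants, $s$, and $[w]_{RH_s}$) with $w \in A_q = RC\!\big(\tfrac{1}{1-q},1\big)$. The shrinking property (Lemma~\ref{lemmashrink}) then gives $w \in RC(0,1) = A_\infty$ — this is exactly Corollary~\ref{rhsainf} — so in particular $w \in A_\infty$ with $[w]_{A_\infty}$ controlled by geometric constants, $s$, and $[w]_{RH_s}$.

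Next I would apply the self-improvement available for $A_\infty$ weights, namely the sharp reverse-H\"older inequality: any $w \in A_\infty = RC(0,1)$ enjoys $w \in RC(0, 1+\eta)$ for some $\eta > 0$ depending only on geometric constants and $[w]_{A_\infty}$. (In the Euclidean case this is the standard $A_\infty$ reverse-H\"older inequality; in spaces of homogeneous type it is the version in \cite{HPR} referenced right after Theorem~\ref{ainftyasap}. One could also derive it by tracking constants through parts~\eqref{RHs=>Aq} and~\eqref{rhdelta} of Theorem~\ref{ainftyasap}: $w \in A_\infty$ forces $w \in A_r$ for some finite $r$ by~\eqref{Ainf=>Ar}, and then~\eqref{rhdelta} upgrades this to $w \in RH_{1+\delta}$, i.e.\ $RC(1,1+\delta)$; concatenating with $RC(0,1)$ via Lemma~\ref{freelemmaa} yields $RC(0,1+\delta)$.) Setting $t := \min\{1+\eta,\, s + \tfrac{1}{2}\}$ — or simply $t := 1+\eta$ if one only wants $t>1$, but to get $t>s$ one needs to argue $\eta$ can be taken so that $1+\eta>s$, which is not automatic, so in fact I would instead take $t$ in $(s, 1+\eta)$ only after first noting that $RH_s \subset RH_{1+\delta'}$ for the *specific* improved exponent. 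Let me restructure: apply the $A_\infty$ reverse-H\"older inequality to $w$ to get $w \in RC(0, s+\tau)$ for a suitable $\tau>0$; since the improvement exponent for $A_\infty$ weights can be taken above any fixed threshold only if $[w]_{A_\infty}$ is close to $1$, the cleaner route is: by Theorem~\ref{ainftyasap}~\eqref{RHs=>Aq} and the quantitative reverse-H\"older improvement, $w \in RC(0, s')$ for some $s' > s$; shrinking gives $w \in RC(1, s') = RH_{s'}$, and we take $t := s'$.

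Concretely, then, the argument is: $w \in RC(1,s) \Rightarrow w \in A_\infty = RC(0,1)$ by Corollary~\ref{rhsainf}; the reverse-H\"older self-improvement for $A_\infty$ weights yields $\eta>0$ with $w \in RC(0,1+\eta)$; but this alone need not beat $s$, so I also use that $w \in RC(1,s)$ directly, together with the concatenation property, to see $w \in RC(0,1) \cap RC(1,s) = RC(0,s)$ — wait, that is false since $RC(0,1) \cap RC(1,s) = RC(0,s)$ requires $w \in RC(1,s)$, which we \emph{do} have. So $w \in RC(0,s)$, and then the $A_\infty$-type self-improvement applied to the pair $(0,s)$ (valid because $w \in RC(0,1) = A_\infty$ controls the relevant reverse-H\"older blow-up uniformly) gives $t > s$ with $w \in RC(0,t)$. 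Finally, shrinking (Lemma~\ref{lemmashrink}) yields $w \in RC(0,t) \subset RC(1,t) = RH_t$, which is the claim. Visually this is: start with the $RH_s$ arrow from $1$ to $s$; push the left endpoint back to $0$ using $A_\infty$ (Corollary~\ref{rhsainf} / concatenation); then stretch the right endpoint from $s$ past $s$ to $t$ using the $A_\infty$ self-improving property; then shrink the left endpoint back to $1$.

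\textbf{Main obstacle.} The delicate point is the middle step: justifying that the right endpoint of the arrow anchored at $0$ can be stretched strictly beyond $s$. The shrinking property goes the wrong way, and the scaling property changes both endpoints; so the stretch must come from a genuine self-improvement input. The honest way to get it is the quantitative $A_\infty$ reverse-H\"older inequality (Theorem~\ref{ainftyasap}~\eqref{Ainf=>Ar} followed by~\eqref{rhdelta}, or directly the sharp version in \cite{HPR}): once $w \in A_\infty = RC(0,1)$ with a controlled constant, one has $w \in RC(0,1+\delta)$, hence by shrinking $w \in RC(1,1+\delta) = RH_{1+\delta}$, and more importantly the full strength $w \in RH_{s(1+\delta)}$-type improvement of the original $RH_s$ bound — concretely, $RH_s$ weights satisfy a reverse-H\"older inequality with exponent $s + c$ for some $c>0$ controlled by geometric constants, $s$, and $[w]_{RH_s}$. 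Taking $t := s+c$ and using $RC(0,t) \subset RC(1,t) = RH_t$ (shrinking) closes the proof. I expect the write-up to hinge entirely on correctly citing/assembling this self-improvement of $RH_s$ and on the bookkeeping that $RC(0,1) \cap RC(1,s) = RC(0,s)$, both of which are available from the machinery already set up in the excerpt.
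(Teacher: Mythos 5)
Your opening moves are fine: Theorem \ref{ainftyasap} \eqref{RHs=>Aq} plus shrinking gives $w \in A_\infty$, and concatenation with the hypothesis $w \in RC(1,s)$ gives $w \in RC(0,s)$. But the decisive step --- stretching the right endpoint strictly past $s$ --- is exactly where your argument has a gap. You write that ``the $A_\infty$-type self-improvement applied to the pair $(0,s)$ \dots gives $t>s$ with $w \in RC(0,t)$,'' justified only by the parenthetical remark that $A_\infty$ ``controls the relevant reverse-H\"older blow-up uniformly.'' No result quoted in the paper provides a right self-improvement of $RC(0,s)$ (equivalently of $RH_s$) at the exponent $s$: Theorem \ref{ainftyasap} only improves at the exponent $1$ (parts \eqref{rhdelta} and \eqref{openeps}) or pushes $0$, respectively $1$, to a finite negative exponent (parts \eqref{Ainf=>Ar} and \eqref{RHs=>Aq}). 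The assertion in your ``main obstacle'' paragraph that $RH_s$ weights satisfy a reverse H\"older inequality with exponent $s+c$ is precisely the content of Corollary \ref{rhsrht}, so invoking it is circular. (You also misquote Corollary \ref{rhsainf}: it states $w^s \in A_\infty$, not $w \in A_\infty$ --- although $w \in A_\infty$ does follow from \eqref{RHs=>Aq} plus shrinking, as you argue.)

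The missing idea, which is the paper's route, is to transport the improvement-at-$1$ to an improvement-at-$s$ via the scaling axiom: from $w \in RC(0,s)$, scale by $\theta = s$ to get $w^s \in RC(0,1) = A_\infty$ (this is what Corollary \ref{rhsainf} actually says); by \eqref{Ainf=>Ar} followed by \eqref{rhdelta}, $w^s \in RH_{1+\delta} = RC(1,1+\delta)$ for some $\delta>0$; concatenation gives $w^s \in RC(0,1+\delta)$; scaling back by $\theta = 1/s$ gives $w \in RC(0, s(1+\delta))$. Thus $t := s(1+\delta) > s$ works, and shrinking then gives $RC(0,t) \subset RC(1,t) = RH_t$. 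Your own first paragraph already contains the chain $A_\infty \Rightarrow A_r \Rightarrow RH_{1+\delta} \Rightarrow RC(0,1+\delta)$; what you never do is apply it to $w^s$ rather than to $w$, and that is the one step that makes the small $\delta$-improvement at exponent $1$ beat the fixed threshold $s$.
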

\begin{proof} The visual proof is illustrated in Figure \ref{fig:rhsrht}. Again, the first step uses the self-improving property from Theorem \ref{ainftyasap}  \eqref{RHs=>Aq}.
\begin{figure}[H] 
   \centering
   \includegraphics[width=5in]{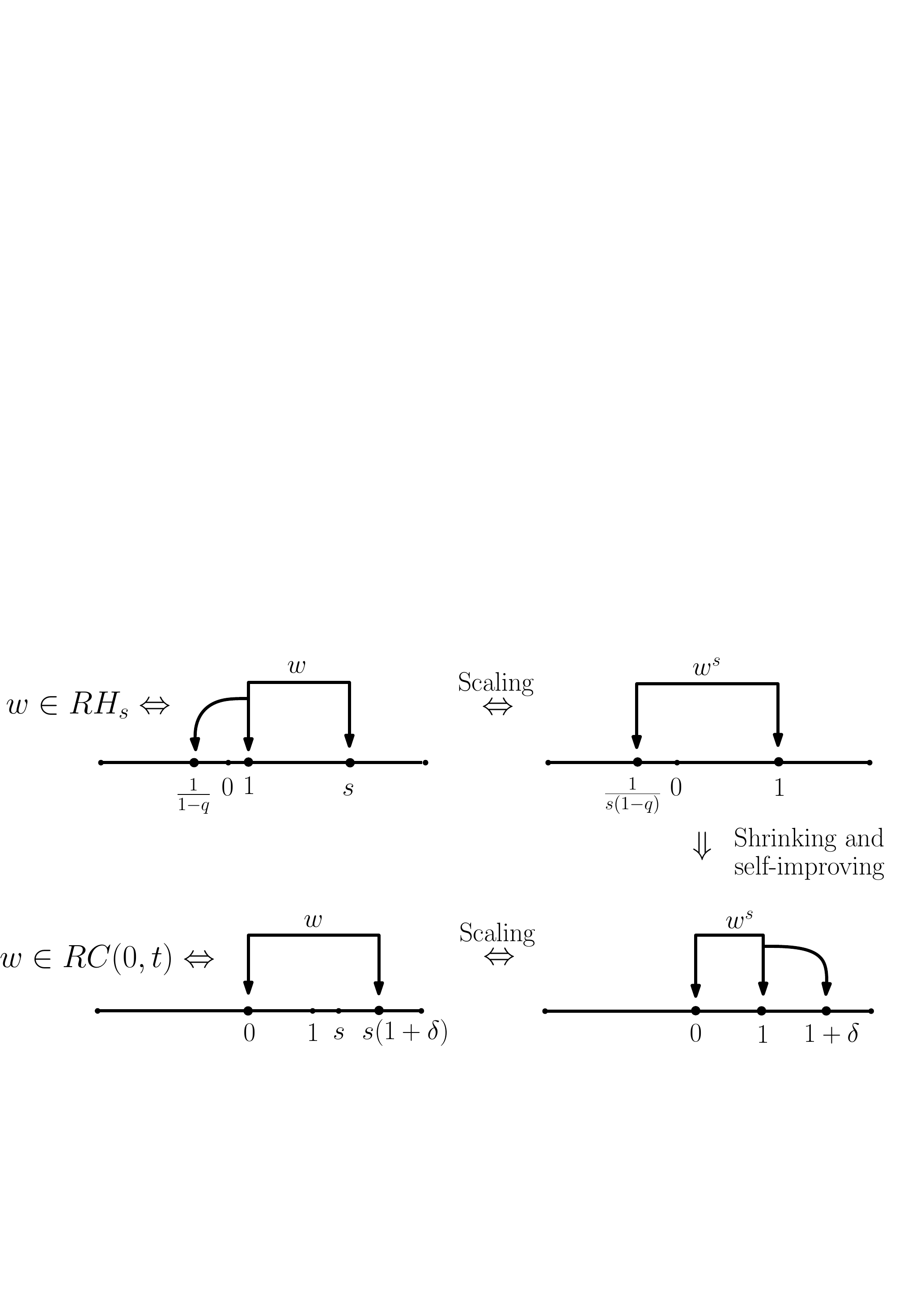}
   \caption{Proof of Corollary \ref{rhsrht} with $t:=s(1+\delta)$.}
   \label{fig:rhsrht}
\end{figure}
\end{proof}

\begin{cor}\label{ainfa2} If $w \in A_\infty$, then there exists $\eps > 0$ such that $w^\eps \in A_2$.
\end{cor}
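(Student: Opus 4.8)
The plan is to repeat the diagram-chasing scheme used for Corollaries \ref{rhsainf}--\ref{pandrhsinq}, importing exactly one non-elementary fact, namely part \eqref{Ainf=>Ar} of Theorem \ref{ainftyasap}. Recall that $A_\infty = RC(0,1)$ and $A_2 = RC(-1,1)$, so the goal is to produce a power $\eps>0$ with $w^\eps \in RC(-1,1)$; by Remark \ref{rem:qq} the same manipulation will also track the reversal constant.

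First I would apply Theorem \ref{ainftyasap}\eqref{Ainf=>Ar} to the hypothesis $w \in A_\infty$, obtaining some $r \in (1,\infty)$ (depending only on geometric constants and $[w]_{A_\infty}$) with $w \in A_r = RC\bigl(\tfrac{1}{1-r},1\bigr)$. Since the map $r \mapsto \tfrac{1}{1-r}$ is increasing on $(1,\infty)$, the shrinking property (Lemma \ref{lemmashrink}) gives $A_r \subset A_{r'}$ for $r \le r'$, so I may assume without loss of generality that $r \ge 2$; then $-1 \le \tfrac{1}{1-r} < 0$. (Had the quoted theorem returned some $r < 2$, one could simply take $\eps = 1$, since then $w \in A_r \subset A_2$.)

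Next I would apply the scaling property (Theorem \ref{rcmain}\eqref{rcmain1}) with $\theta := \tfrac{1}{r-1} \in (0,1]$, chosen precisely so that the lower exponent lands on $-1$: from $w \in RC\bigl(\tfrac{1}{1-r},1\bigr)$ this yields $w^\theta \in RC(-1,\,r-1)$ with $[w^\theta]_{RC(-1,r-1)} = [w]_{A_r}^{1/(r-1)}$. Finally, since $r-1 \ge 1$, the shrinking property lets me pull the right endpoint back to $1$, giving $w^\theta \in RC(-1,1) = A_2$ with $[w^\theta]_{A_2} \le [w]_{A_r}^{1/(r-1)}$. Thus $\eps = \tfrac{1}{r-1}$ works. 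I do not anticipate any real obstacle: all the depth is in Theorem \ref{ainftyasap}\eqref{Ainf=>Ar}, and the rest is just the bookkeeping of exponents — the only point to watch is the harmless normalization $r \ge 2$ that makes the final shrinking step legitimate.
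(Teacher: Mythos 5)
Your argument is correct and is essentially the paper's own proof: the visual proof in Figure \ref{fig:ainfa2} applies Theorem \ref{ainftyasap}\eqref{Ainf=>Ar} to land in $A_r$, then scales and shrinks to reach $A_2$ with precisely $\eps = \frac{1}{r-1}$. Your explicit normalization $r \ge 2$ (or taking $\eps = 1$ when $r < 2$) is a sensible bit of bookkeeping that the paper's diagram leaves tacit, but it does not change the route.
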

\begin{proof} The visual proof is illustrated in Figure \ref{fig:ainfa2}. Notice how  the first step uses the self-improving property from Theorem \ref{ainftyasap}  \eqref{Ainf=>Ar}.
\begin{figure}[H] 
   \centering
   \includegraphics[width=5in]{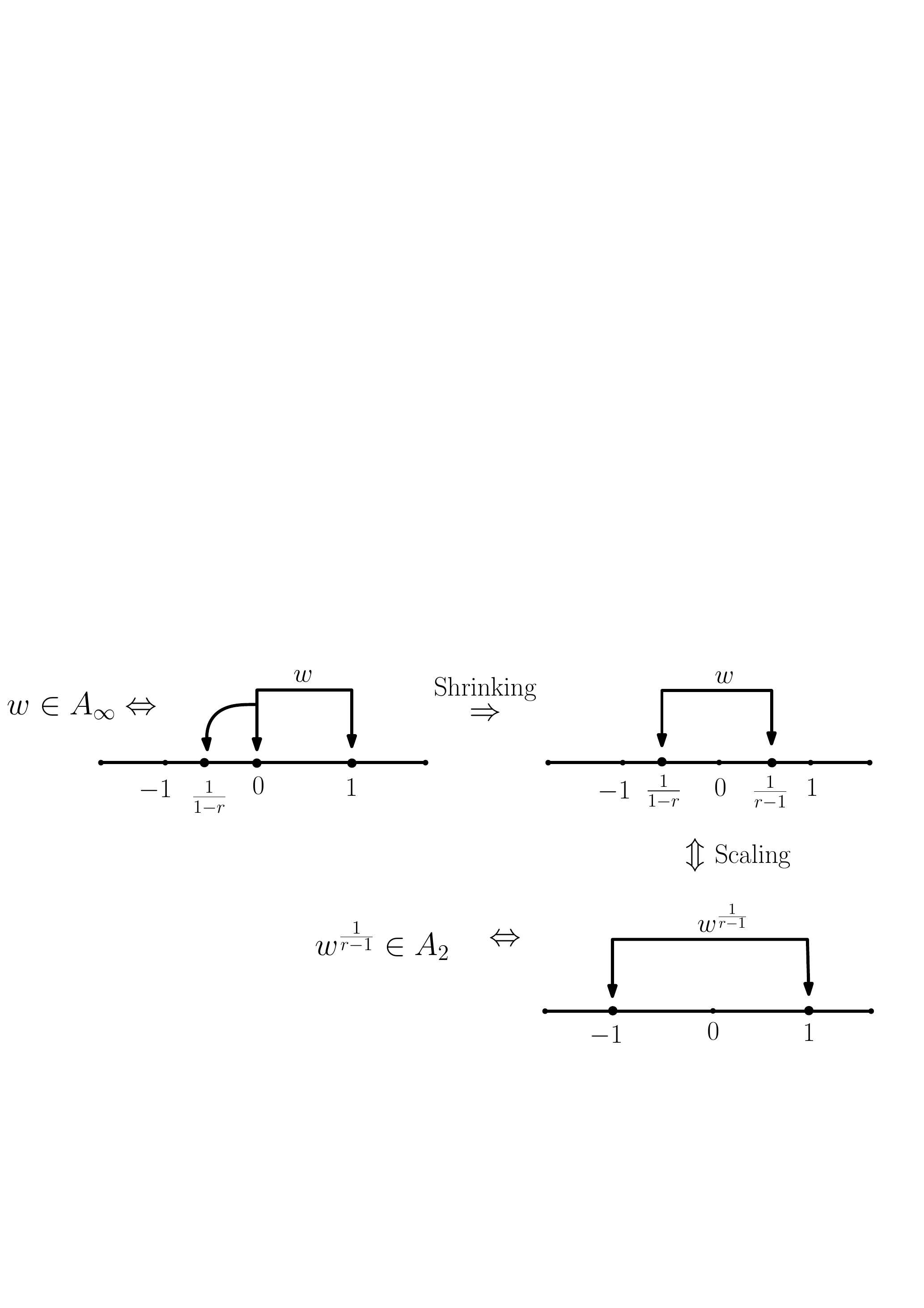}
   \caption{Proof of Corollary \ref{ainfa2} with $\eps := \frac{1}{r-1}$.}
   \label{fig:ainfa2}
\end{figure}
\end{proof}

Weak  reverse classes also enjoy self-improving properties. For instance, we have

\begin{theor}\label{selfWRH} Fix $1 < s < \infty$ and  $0 < p < q < r \leq \infty$. Then,
\begin{enumerate}[(i)]
\item\label{selfWRHLHS}  for every $w \in RH^{weak}_s$ there exists $\eps > 0$ such that $w \in RH^{weak}_{s+\eps}$, and
\item\label{selfWRHpq} $RC^{weak}(q,r) = RC^{weak}(p,r)$.
\end{enumerate}
Visually, these properties are depicted in Figures \ref{fig:selfimproveRHweakRHS} and \ref{fig:selfimproveWRH}.
\begin{figure}[H] 
   \centering
   \includegraphics[width=4in]{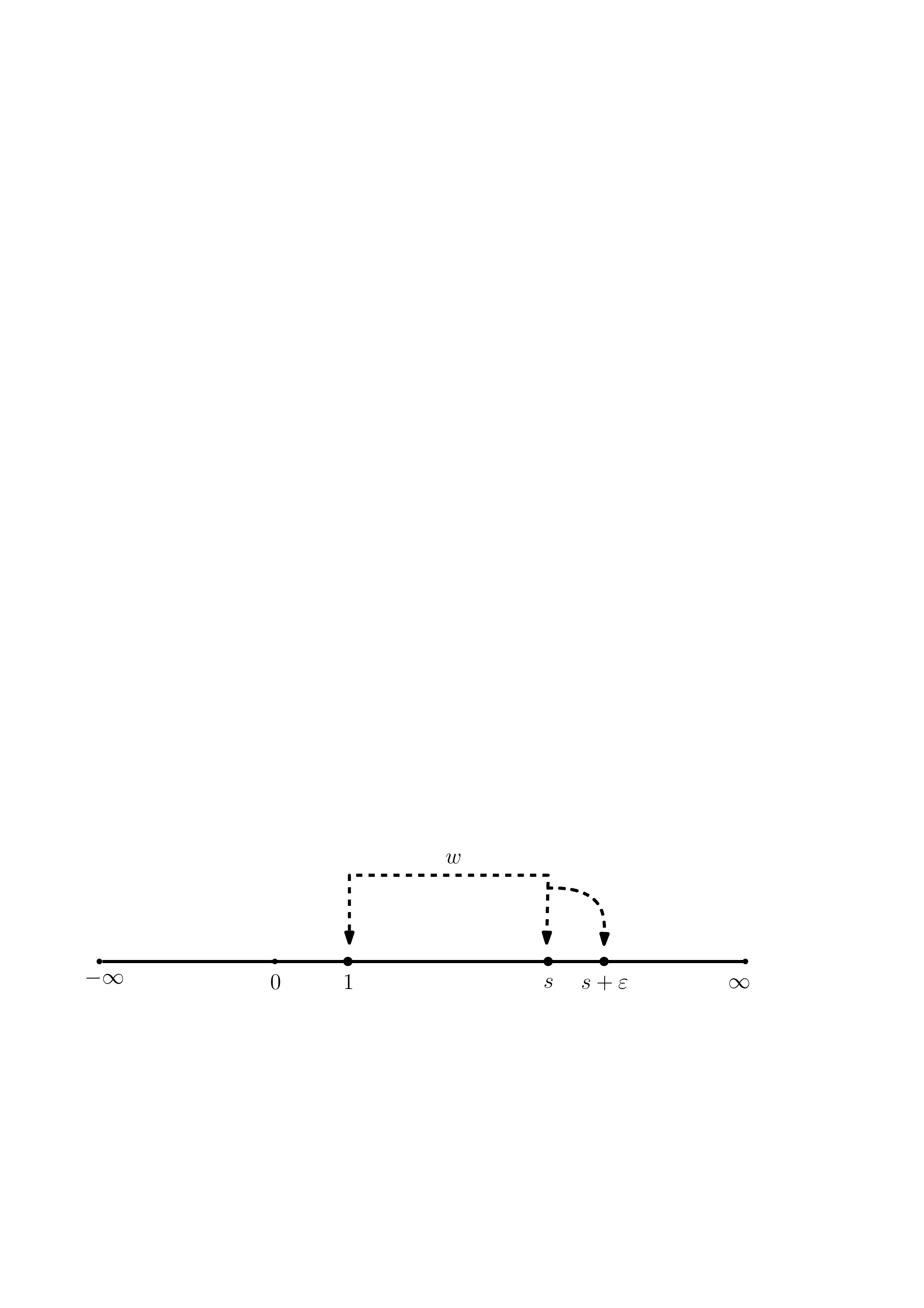}
   \caption{The self-improving property for the weak reverse class $RH^{weak}_s$ from Theorem \ref{selfWRH} (\ref{selfWRHLHS}).}
   \label{fig:selfimproveRHweakRHS}
\end{figure}

\begin{figure}[H] 
   \centering
   \includegraphics[width=4.6in]{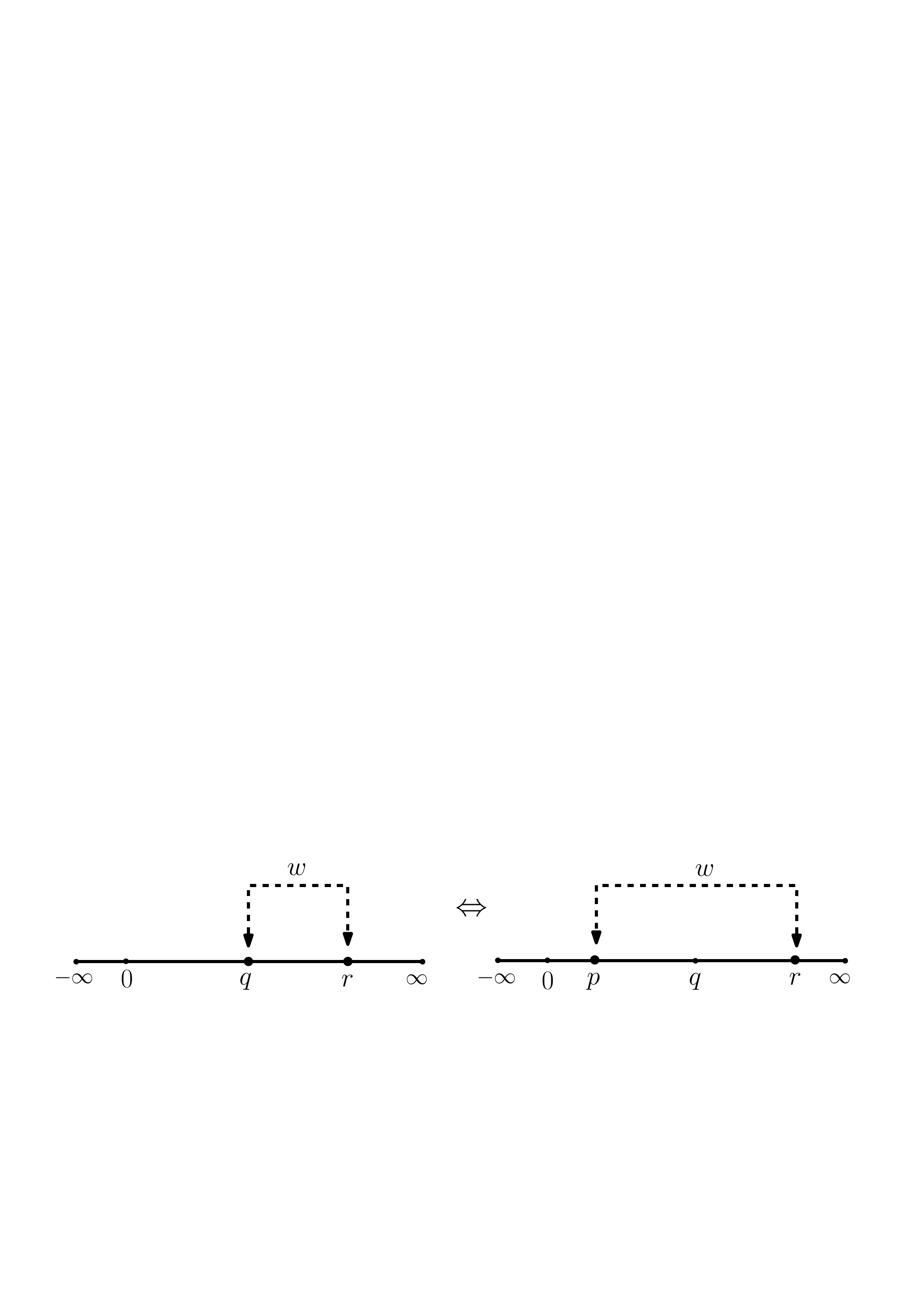}
   \caption{The self-improving property for $RC^{weak}(q,r)$ from Theorem \ref{selfWRH} (\ref{selfWRHpq}). }
   \label{fig:selfimproveWRH}
\end{figure}

\end{theor}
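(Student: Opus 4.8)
The plan is to establish the two parts of Theorem~\ref{selfWRH} separately, in each case reducing the weak statement to a corresponding statement for (ordinary) reverse classes.

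For part~\eqref{selfWRHpq}, the inclusion $RC^{weak}(p,r) \subset RC^{weak}(q,r)$ is the easy direction: since $p < q$, H\"older's inequality \eqref{natural} gives $w(p,2B) \leq w(q,2B)$, and therefore
\[
w(r,B) \leq C\, w(p, 2B)\big|_{\mu\text{-normalized}} \leq C\, w(q,2B)\big|_{\mu\text{-normalized}},
\]
where I am writing $w(p,2B)$-type quantities with the normalization $\mu(B)^{-1}\int_{2B}$ appearing in Definition~\ref{wRSC}. For the reverse inclusion $RC^{weak}(q,r) \subset RC^{weak}(p,r)$, which is the substantive one, the idea is that a weak reverse inequality at exponent $q > 0$ self-improves downward through $0$ because such a control over doubled balls is, morally, an $A_\infty$-type condition on $w^q$. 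Concretely: suppose $w \in RC^{weak}(q,r,C)$. By the shrinking property (Lemma~\ref{lemmashrink}) applied inside the weak inequality, one may assume $r$ finite or treat $r=\infty$ directly; then the weak inequality $w(r,B) \le C\,(\mu(B)^{-1}\int_{2B} w^q)^{1/q}$ forces a weak-$RH$ bound on $w^q$ over $\mathcal B_\Omega$, and the classical self-improvement of weak reverse-H\"older / weak-$A_\infty$ weights (available in spaces of homogeneous type; see the discussion around Theorem~\ref{ainftyasap} and the references \cite{HPR}) yields an exponent $q_0 < q$ with a weak comparability $(\mu(B)^{-1}\int_{2B} w^q)^{1/q} \le C'(\mu(B)^{-1}\int_{4B} w^{q_0})^{1/q_0}$; since balls in $\B$ are closed under the passage $B \mapsto 2B$ up to adjusting the geometric constants, iterating drives the lower exponent below any prescribed $p \in (0,q)$, giving $w \in RC^{weak}(p,r,C'')$ after finitely many steps. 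The bookkeeping with the dilation factors $2B$, $4B$, etc., and the doubling constant $C_\mu$ is where care is needed, but it is routine.

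For part~\eqref{selfWRHLHS}, recall $RH^{weak}_s = RC^{weak}(1,s)$. The strategy is to first pass to the doubled setting and then invoke the ordinary right self-improvement of $RH_s$ embedded in Theorem~\ref{ainftyasap}. Given $w \in RH^{weak}_s$, one shows that $w$ restricted to sub-balls behaves like an ordinary $RH_s$ weight on a fixed dilate: the weak inequality $w(s,B) \le C\,(\mu(B)^{-1}\int_{2B} w)^{1/s \cdot (s/1)}$—i.e. with the average of $w$ over $2B$ on the right—combined with a Calder\'on--Zygmund / Gehring-type argument (which is exactly the engine behind Theorem~\ref{ainftyasap}\eqref{rhdelta}--\eqref{RHs=>Aq} and works with the doubled balls in spaces of homogeneous type) upgrades the exponent $s$ to some $s + \eps$ with the average still taken over a dilate of $B$. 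The key point is that the Gehring lemma in this setting produces, from a weak reverse-H\"older inequality over doubled balls, a weak reverse-H\"older inequality at a strictly larger exponent over (a possibly larger dilate of) the same balls; absorbing the dilation into the geometric constants then gives $w \in RH^{weak}_{s+\eps}$.

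\textbf{Main obstacle.} The genuine difficulty is not conceptual but is the careful handling of the \emph{doubled balls}: the weak reverse inequalities compare a mean over $B$ with a mean over $2B$, so chaining self-improvement steps repeatedly enlarges the dilation factor, and one must verify that this is harmless—either because $\B$ is stable under $B \mapsto \lambda B$ up to changing constants (using \eqref{defBOmega} and a covering argument inside $\Omega$), or because a single application of the Gehring/Calder\'on--Zygmund machinery already yields the improved exponent over a \emph{fixed} dilate rather than an ever-growing one. I would prove the latter by running the stopping-time argument at a scale comparable to $\diam B$ so that only $2B$ (and at worst $4B$) is ever invoked. Once this localization is in place, both parts follow by citing the standard self-improvement results quoted in Theorem~\ref{ainftyasap}, together with the shrinking (Lemma~\ref{lemmashrink}) and concatenation (Lemma~\ref{freelemmaa}) properties of the formalism.
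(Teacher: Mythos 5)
A preliminary remark: the paper itself gives no proof of Theorem \ref{selfWRH}; it states the result and points to the literature (\cite{Kin} for part \eqref{selfWRHLHS}, \cite[Lemma 1.4]{BKL} and \cite[Remark 4.4]{KS} for part \eqref{selfWRHpq}), so your sketch has to be measured against those standard arguments. For part \eqref{selfWRHLHS} your outline is essentially the right one: the statement is a weighted Gehring lemma, and running the Calder\'on--Zygmund/good-$\lambda$ stopping time at the scale of $B$ so that only a fixed dilate ever appears is exactly what \cite{Kin} does. One caveat: you cannot literally ``invoke Theorem \ref{ainftyasap}'', since its statements (and the proofs behind them) concern the strong classes, where comparability on the same ball and doubling of $w\,d\mu$ are available; in the weak setting you must actually run the Gehring argument, not cite its strong-class consequences.

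Part \eqref{selfWRHpq} is where there is a genuine gap. The substantive inclusion $RC^{weak}(q,r)\subset RC^{weak}(p,r)$ is precisely the exponent-lowering statement, and the engine you invoke for it --- ``the classical self-improvement of weak reverse-H\"older / weak-$A_\infty$ weights yields an exponent $q_0<q$'' --- is the very fact to be proved, so as written the argument is circular; nor can it be rescued by the strong-class results of Theorem \ref{ainftyasap}, because a weight in a weak class need not be doubling (this is exactly the point of the dashed arrows, cf.\ Remark \ref{rem:dashedtosolid}). Your iteration over the dilates $2B, 4B, \dots$ is also more than bookkeeping: by \eqref{defBOmega} membership in $\B$ only guarantees $2B\subset\Omega$, so the hypothesis cannot even be applied to the ball $2B$, and larger dilates may leave $\Omega$ altogether. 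The actual proof (that of \cite[Lemma 1.4]{BKL}) is elementary and needs no weak-$A_\infty$ machinery: interpolate $\|w\|_{L^q}\le\|w\|_{L^p}^{\lambda}\|w\|_{L^r}^{1-\lambda}$ and use Young's inequality to split off a small multiple of the $r$-mean; upgrade the hypothesis, by covering the larger ball with balls of radius comparable to $t'-t$ whose doubles stay inside $2B$, to pairs of concentric balls $B_t\subset B_{t'}$ with $t<t'\le 2t$ and constant of the order $(t'/(t'-t))^{a}$; then absorb the $r$-mean by iterating over a sequence of radii increasing from $R$ to $2R$ and summing a geometric series. All balls in this scheme remain inside $2B$, so the boundary issue never arises. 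If you replace your self-improvement step by this absorption argument, part \eqref{selfWRHpq} is complete; the easy inclusion via Jensen is fine, except that with the normalization $\mu(B)^{-1}\int_{2B}$ it costs a factor $C_\mu^{1/p}$, which you should record.
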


The whole phenomenon of reverse inequalities and their self-improving properties stems from Gehring's work \cite{Ge74} (see \cite[Chapter 3]{BB11} for an exposition of the so-called Gehring's lemma in doubling metric spaces). As mentioned, the connection between reverse H\"older inequalities and $A_p$ weights was first explored in \cite{CF74}. A proof for Theorem \ref{selfWRH} (\ref{selfWRHLHS}) in spaces of homogeneous type can be found, for instance, in \cite{Kin}. A proof for Theorem \ref{selfWRH} (\ref{selfWRHpq}) in spaces of homogeneous type can be found, for instance, in \cite[Lemma 1.4]{BKL} and the case $r=\infty$ has been worked out, for instance, in \cite[Remark 4.4]{KS}.

\section{Reverse classes, BMO, BLO, and BUO}

Throughout this section we will consider $\Omega = X$.

\begin{dfn}\label{bmo} We recall the definitions of $BMO$, $BLO$ and $BUO$; namely, the spaces of functions of bounded mean oscillation, bounded lower oscillation, and bounded upper oscillation, respectively. $w\in BMO$  if and only if 
\bee\label{bmo}
\|w\|_{BMO}:=\ds\sup_{B\in\B}
\left(\frac{1}{\mu(B)}\int_B \left|w-w_B\right| \,d\mu\right) <
\infty,
\eee
where, as usual,  $w_B:=\left(\frac{1}{\mu(B)}\int_B w \,d\mu\right)$. $w\in BLO$  if and only if
\bee\label{blo}
\|w\|_{BLO}:=\ds\sup_{B\in\B}
\left(\frac{1}{\mu(B)}\int_B (w-\essinf_B w)
\,d\mu\right) < \infty. 
\eee
$w\in BUO$  if and only if
\bee\label{buo}
\|w\|_{BUO}:=\ds\sup_{B\in\B}
\left(\frac{1}{\mu(B)}\int_B (\esssup_B w - w)
\,d\mu\right) < \infty. 
\eee
\end{dfn}

\propref{bmobloap} below provides characterizations of $BMO$
and $BLO$ in terms of $A_p$ classes.

\begin{prop}\label{bmobloap}
The following characterizations hold true:
\begin{enumerate}[$(i)$]
\item\label{bmoap}
$\log w\in BMO$ if and only if $w^\alpha \in A_p$ for some $1 < p < \infty$ and some
$\alpha \in \re$.
\item\label{bloap}
$\log w\in BLO$ if and only if $w^\eps \in A_1$ for some $\eps>0$.
\item\label{buoap}
$\log w\in BUO$ if and only if $w^\eps \in RH_\infty$ for some $\eps>0$.
\end{enumerate}
\end{prop}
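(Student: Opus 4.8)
The plan is to prove the three equivalences by translating each oscillation condition into the language of reverse classes and then invoking the self-improving results already in hand. The single unifying observation is the classical fact connecting $BMO$ to $A_\infty$: a function $u$ belongs to $BMO$ if and only if $e^{\lambda u} \in A_\infty = RC(0,1)$ for some $\lambda > 0$; equivalently, writing $v := e^{u}$, one has $u = \log v \in BMO$ iff $v^\alpha \in A_\infty$ for some $\alpha \neq 0$. This is the John--Nirenberg inequality packaged in exponential form, and I would cite it (e.g. from \cite[Chapter 7]{Duo} or \cite[Chapter 9]{Grafakos}) rather than reprove it. The key point for the write-up is that $[v^\alpha]_{A_\infty}$, i.e. the reversal constant of $v^\alpha$ in $RC(0,1)$, controls and is controlled by $\|u\|_{BMO}$ (for a suitably small $\alpha$ depending on $\|u\|_{BMO}$ and geometric constants), and similarly $\log v \in BLO$ exactly encodes a one-sided version: the bound $w_B - \essinf_B w \le c$ says precisely that the arithmetic mean of $v=e^w$ is comparable to its essential infimum at the exponential scale, i.e. $v^\eps \in RC(-\infty,1) = A_1$ for small $\eps$; dually, $\log v \in BUO$ encodes $v^\eps \in RC(1,\infty) = RH_\infty$.

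For part \eqref{bmoap}: if $\log w \in BMO$, the exponential John--Nirenberg inequality gives $w^\alpha \in A_\infty = RC(0,1)$ for some small $\alpha$, and then Theorem \ref{ainftyasap}\eqref{Ainf=>Ar} upgrades this to $w^\alpha \in A_r$ for some $1 < r < \infty$. Conversely, if $w^\alpha \in A_p$ for some $\alpha \in \re$, $1 < p < \infty$, then by the splitting property $w^\alpha \in RC((1-p)^{-1},1) \subset RC(0,1) = A_\infty$, hence $\log(w^\alpha) = \alpha \log w \in BMO$, so $\log w \in BMO$. (If $\alpha = 0$ the statement is vacuous/degenerate, so one tacitly assumes $\alpha \ne 0$, and the case $\alpha < 0$ is handled by the scaling property, Theorem \ref{rcmain}\eqref{rcmain2}, which replaces $w$ by $w^{-1}$ and preserves the $A_\infty$ membership of a power.)

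For parts \eqref{bloap} and \eqref{buoap}: the forward direction of \eqref{bloap} is the one-sided John--Nirenberg estimate — from $\|\log w\|_{BLO} < \infty$ one gets, for $\eps$ small depending on $\|\log w\|_{BLO}$, that $(w^\eps)(1,B) \le C\, \essinf_B w^\eps$ uniformly, i.e. $w^\eps \in RC(-\infty,1) = A_1$; the converse is immediate since $w^\eps \in A_1$ says exactly $(w^\eps)(1,B) \le [w^\eps]_{A_1}\,\essinf_B w^\eps$, and taking logarithms and dividing by $\eps$ (using Jensen: $\frac{1}{\mu(B)}\int_B \log w^\eps \le \log (w^\eps)(1,B)$) yields $\frac{1}{\mu(B)}\int_B(\log w - \essinf_B \log w)\,d\mu \le \eps^{-1}\log[w^\eps]_{A_1}$. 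Part \eqref{buoap} is obtained from \eqref{bloap} by symmetry: $\log w \in BUO$ iff $-\log w = \log(w^{-1}) \in BLO$ iff $w^{-\eps} \in A_1 = RC(-\infty,1)$ for some $\eps > 0$, and applying the scaling property with $\theta = -1$ (Theorem \ref{rcmain}\eqref{rcmain2}) this is equivalent to $w^{\eps} \in RC(1,\infty) = RH_\infty$.

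The main obstacle is the one-sided John--Nirenberg inequality needed for the forward implications of \eqref{bloap} (and hence \eqref{buoap}): while the two-sided $BMO$ version is thoroughly standard in spaces of homogeneous type, the $BLO \Rightarrow A_1$-of-a-power statement is less commonly quoted, and one should either cite it carefully (it appears in the $A_1$/$BLO$ literature, e.g. work of Coifman--Rochberg) or sketch the short argument: expand $\exp$ in power series and use that for $B \in \B$ and $\eps$ small the good-$\lambda$/distributional bound coming from the $BLO$ condition makes $\frac{1}{\mu(B)}\int_B e^{\eps(\log w - \essinf_B \log w)}\,d\mu$ finite and uniformly bounded, which is exactly $(w^\eps)(1,B) \le C\,\essinf_B w^\eps$. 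Everything else is a mechanical application of the three axioms (shrinking/splitting, concatenation, scaling) together with Theorem \ref{ainftyasap}, exactly in the spirit of the corollaries already proved in the paper.
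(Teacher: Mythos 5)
Parts (i) and (ii) of your proposal are fine and, in fact, in the same spirit as the paper, which does not prove Proposition~\ref{bmobloap} at all but cites the classical sources: the exponential John--Nirenberg characterization of $BMO$ via $A_\infty$ for part (i) (together with Theorem~\ref{ainftyasap}~\eqref{Ainf=>Ar} to land in some $A_r$), and Coifman--Rochberg \cite[Lemma 1]{CR80} for part (ii); your Jensen argument for the easy direction of (ii) is correct. For part (iii), however, the paper cites Ou's identity $RH_\infty=e^{BUO}$, while you try to deduce (iii) from (ii) by pure symmetry and scaling, and this is where there is a genuine gap.

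The error is the final ``equivalent'' in your (iii): scaling by $\theta=-1$ sends $A_1=RC(-\infty,1)$ to $RC(-1,\infty)$, \emph{not} to $RH_\infty=RC(1,\infty)$. Thus your chain proves only the forward implication, $\log w\in BUO \Rightarrow w^{-\eps}\in A_1 \Rightarrow w^{\eps}\in RC(-1,\infty)\subset RH_\infty$ (by shrinking). The converse, that $w^{\eps}\in RH_\infty$ for some $\eps>0$ forces $\log w\in BUO$, does not follow formally: from $w^{\eps}\in RC(1,\infty)$ scaling by $-1$ only yields $w^{-\eps}\in RC(-\infty,-1)$, and no combination of shrinking, concatenation, and scaling moves the right endpoint across $0$ to reach $A_1$. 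Concretely, $\log w\in BUO$ means $\esssup_B w^{\eps}\leq C\,w^{\eps}(0,B)$ for all $B\in\B$, i.e.\ control of the sup by the \emph{geometric} mean, whereas $RH_\infty$ only controls the sup by the \emph{arithmetic} mean; the missing ingredient is precisely $w^{\eps}(1,B)\leq C\,w^{\eps}(0,B)$, i.e.\ $w^{\eps}\in A_\infty$. (Note the asymmetry with (ii), where the corresponding step is free because Jensen gives $w(0,B)\leq w(1,B)$.) The converse direction therefore needs a genuine self-improvement ``across zero'': for instance $RH_\infty\subset RH_2$, then Theorem~\ref{ainftyasap}~\eqref{RHs=>Aq} gives $w^{\eps}\in A_q$ for some $q$, concatenation gives $w^{\eps}\in RC(1/(1-q),\infty)\subset RC(0,\infty)$, and taking logarithms yields the $BUO$ bound (alternatively, invoke Exercise~\ref{RCcrosszero}, or simply cite Ou's theorem as the paper does). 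As written, the backward implication of (iii) is unproved.
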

For a proof of \eqref{bmoap} see, for instance, \cite[p.151]{Duo}, \cite[p.300]{Grafakos}, and \cite[p.218]{St}. The proof of \eqref{bloap} can be found in \cite[Lemma 1]{CR80}. The proof of \eqref{buoap} follows, for instance, from the characterization $RH_\infty = e^{BUO}$ as in \cite[Theorem 3.2]{Ou08}.
In view of Proposition \ref{bmobloap}, we have

\begin{cor}\label{characbmoblo} The following characterizations hold true:
\begin{enumerate}[$(i)$]
\item\label{charbmo} $\log w\in BMO$ if and only if $w\in RC(r,s)$ for some $-\infty \leq r < s \leq \infty$.
\item\label{charblo} $\log w\in BLO$ if and only if $w\in RC(-\infty,r)$ for some $-\infty < r \leq \infty$.
\item\label{charbuo} $\log w\in BUO$ if and only if $w\in RC(r,\infty)$ for some $-\infty \leq r < \infty$.
\end{enumerate}
Visually,
\begin{figure}[H] 
   \centering
   \includegraphics[width=4.5in]{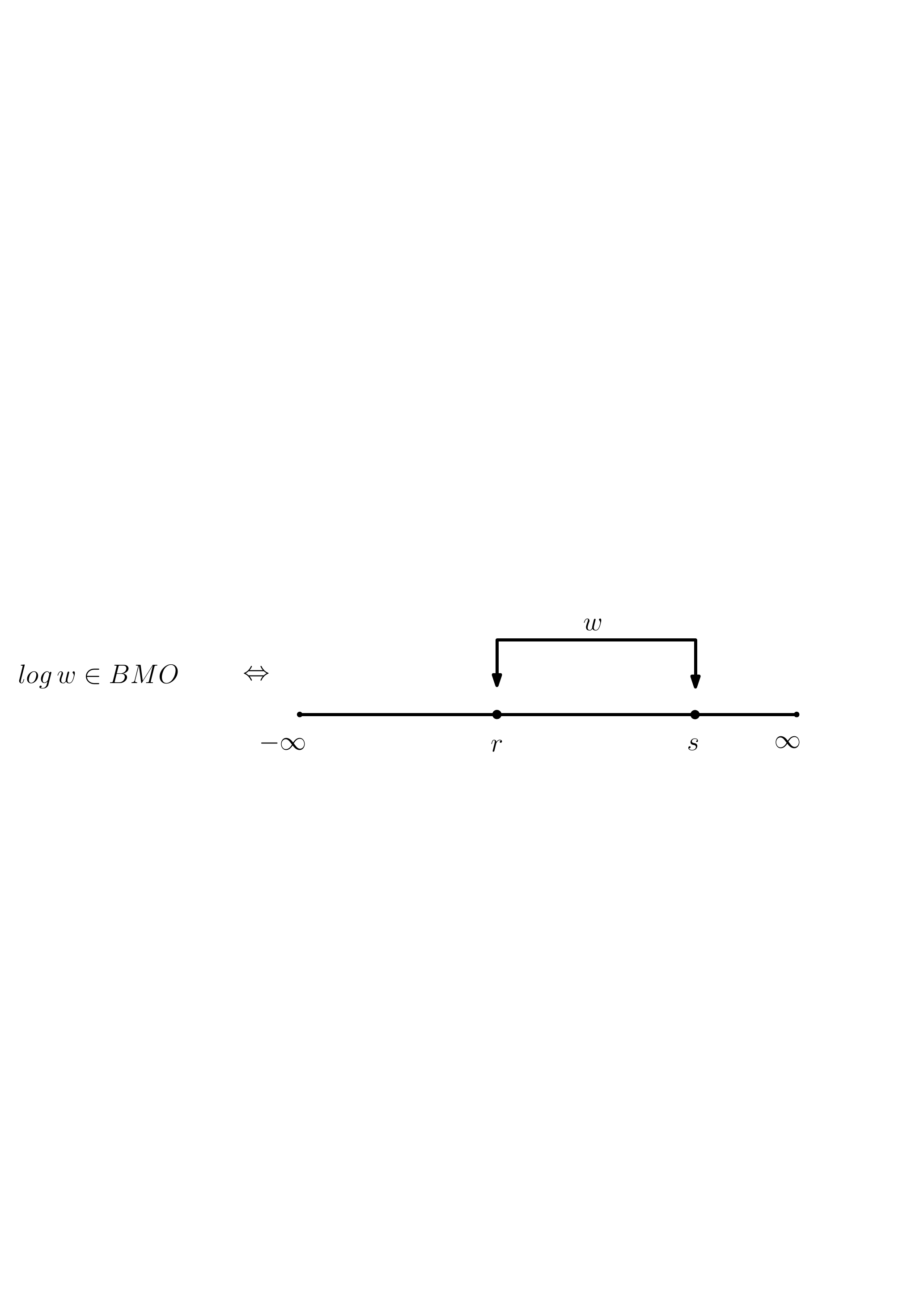}
   \caption{A characterization of $BMO$: $BMO = \log \left( \bigcup\limits_{-\infty \leq r <  s \leq \infty} RC(r,s)\right)$.}
   \label{bmo}
\end{figure}

\begin{figure}[H] 
   \centering
   \includegraphics[width=4.5in]{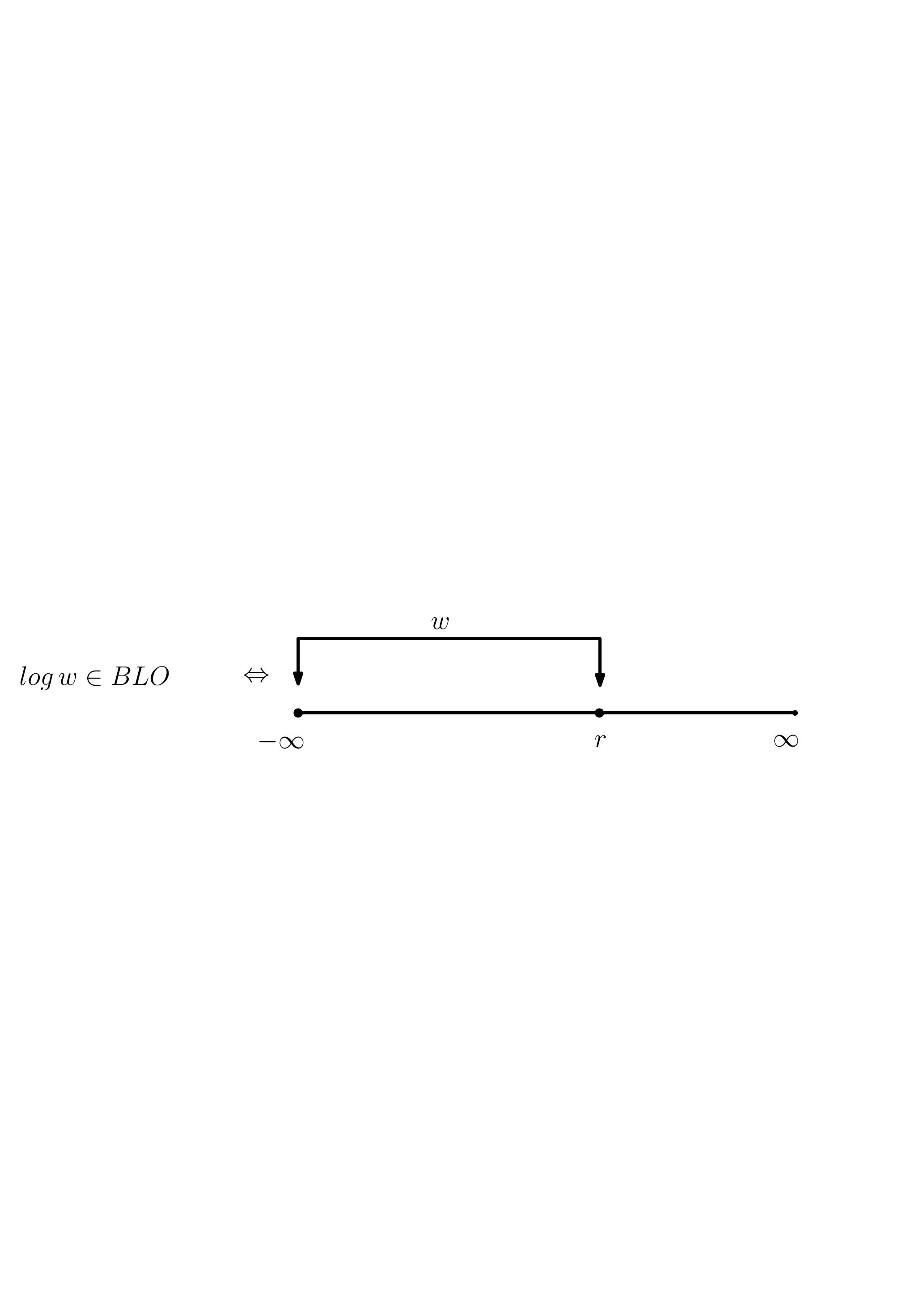}
   \caption{A characterization of $BLO$: $BLO = \log \left( \bigcup\limits_{-\infty < r \leq \infty} RC(-\infty,r)\right)$.}
   \label{blo}
\end{figure}

\begin{figure}[H] 
   \centering
   \includegraphics[width=4.5in]{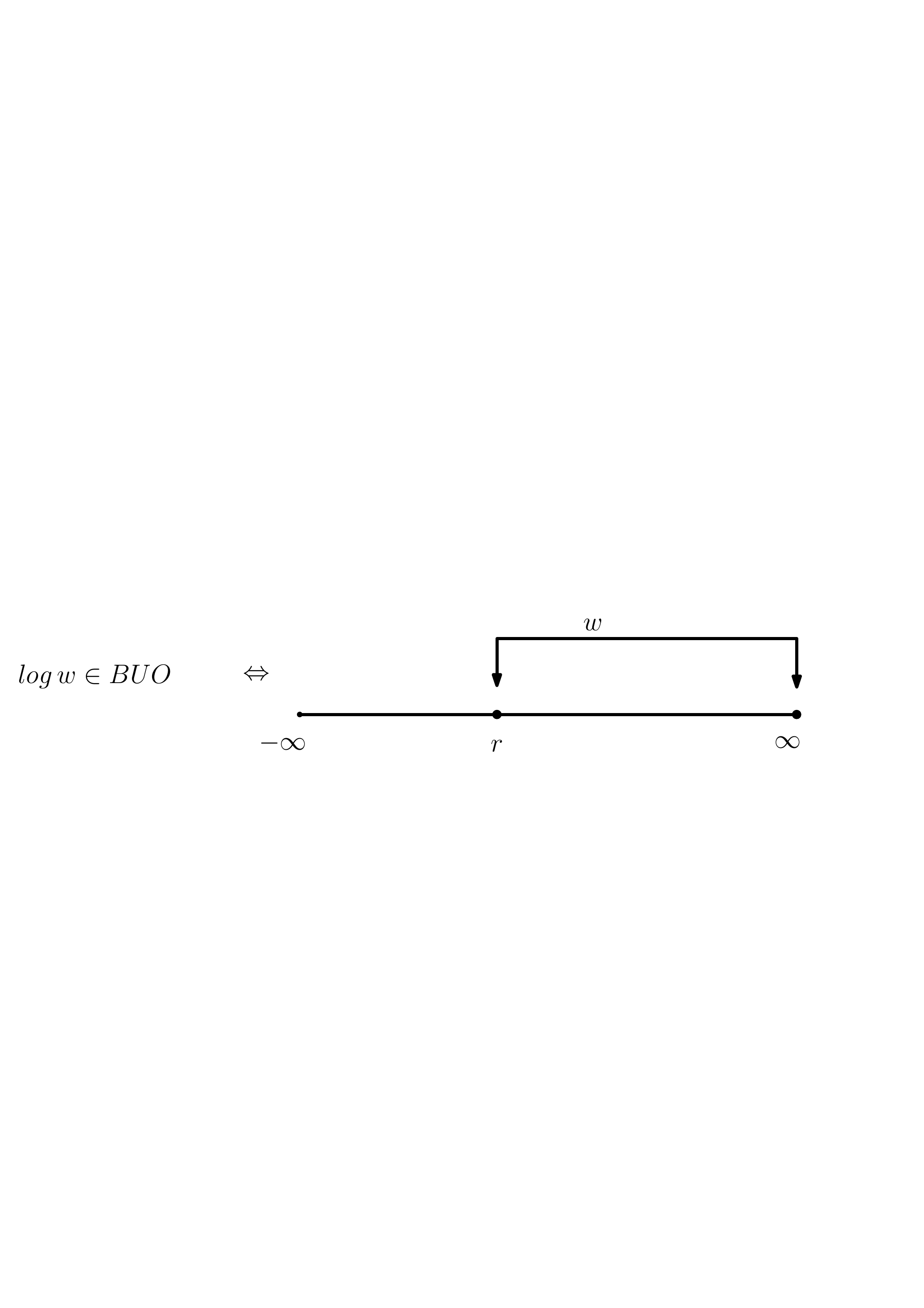}
   \caption{A characterization of $BUO$: $BUO = \log \left( \bigcup\limits_{-\infty \leq r < \infty} RC(r, \infty)\right)$.}
   \label{blo}
\end{figure}

\end{cor}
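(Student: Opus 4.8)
\emph{Proof proposal.} The plan is to reduce each of the three biconditionals to \propref{bmobloap}, translating freely between Muckenhoupt/reverse-H\"older classes and reverse classes via the dictionary $A_p = RC((1-p)^{-1},1)$, $A_1 = RC(-\infty,1)$, $A_\infty = RC(0,1)$, $RH_s = RC(1,s)$, and $RH_\infty = RC(1,\infty)$, and normalizing exponents with the scaling property (\thmref{rcmain}), the shrinking property (\lemref{lemmashrink}), and the self-improving properties (\thmref{ainftyasap}). I will also use the elementary fact, immediate from the very definitions of $BLO$ and $BUO$, that $g\in BLO$ if and only if $-g\in BUO$, since $\frac{1}{\mu(B)}\int_B(g-\essinf_B g)\,d\mu = \frac{1}{\mu(B)}\int_B(\esssup_B(-g)-(-g))\,d\mu$ for every ball $B$; in particular $BMO$ is a vector space, while $BLO$ and $BUO$ are cones interchanged by $g\mapsto -g$. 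The ``only if'' implications are the quick halves: if $\log w\in BMO$ then $w^\alpha\in A_p = RC((1-p)^{-1},1)$ for some $1<p<\infty$ and some $\alpha\neq0$ by \propref{bmobloap}\eqref{bmoap}, and scaling by $\theta=1/\alpha$ lands $w$ in some $RC(r,s)$ with $-\infty<r<s<\infty$; likewise \propref{bmobloap}\eqref{bloap} (resp. \eqref{buoap}) together with scaling gives $w\in RC(-\infty,\eps)$ (resp. $w\in RC(\eps,\infty)$) for some $\eps>0$ whenever $\log w\in BLO$ (resp. $\log w \in BUO$).

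For the converse in \eqref{charbmo}, let $w\in RC(r,s)$ with $r<s$; by \lemref{lemmashrink} I may shrink the exponents, and the argument splits according to the position of $0$. If $r<0<s$, shrink to finite $r<r'<0<s'<s$ and scale by $\theta=s'$ to land $w^{s'}\in RC(r'/s',1) = A_p$ with $p:=1-s'/r'\in(1,\infty)$, so $\log w\in BMO$ by \propref{bmobloap}\eqref{bmoap}. If $0\le r<s$, first shrink so that $s$ is finite; then scaling by $\theta=s$ (when $r=0$) or $\theta=r$ (when $r>0$) places a power of $w$ in $A_\infty=RC(0,1)$, resp. in $RH_{s/r}=RC(1,s/r)$, and \thmref{ainftyasap}\eqref{Ainf=>Ar}, resp. \thmref{ainftyasap}\eqref{RHs=>Aq}, moves that power into some $A_q$, so that $\log w\in BMO$ again. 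The remaining case $r<s\le0$ reduces to the preceding one applied to $w^{-1}\in RC(-s,-r)$, using that $BMO$ is a vector space.

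For the converse in \eqref{charblo}, let $w\in RC(-\infty,r)$ with $-\infty<r\le\infty$. If $r>0$, shrink to a finite positive exponent $r'$ and scale by $\theta=r'$ to get $w^{r'}\in RC(-\infty,1) = A_1$, so $\log w\in BLO$ by \propref{bmobloap}\eqref{bloap}. If $r\le0$, scaling by $\theta=|r|$ (or shrinking, when $r=0$) places a positive power $w^\alpha$ in $RC(-\infty,-1)$; scaling that by $\theta=-1$ gives $w^{-\alpha}\in RC(1,\infty) = RH_\infty$, so $\log(w^{-\alpha})\in BUO$ by \propref{bmobloap}\eqref{buoap}, hence $\log w\in BLO$ by the $BUO$--$BLO$ duality above. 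Finally, \eqref{charbuo} follows from \eqref{charblo} applied to $w^{-1}$: scaling by $\theta=-1$ interchanges the families $\{RC(-\infty,r)\}_{r>-\infty}$ and $\{RC(r,\infty)\}_{r<\infty}$, while $g\mapsto-g$ interchanges $BLO$ with $BUO$.

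Conceptually the proof is a mechanical assembly of the three visual axioms with \propref{bmobloap}, so the ``main obstacle'' is really the case bookkeeping in the converse directions, above all keeping track of the sign of the finite endpoint. The single point that is not purely formal is the observation that a class such as $RC(-\infty,r)$ with $r\le0$ (dually, $RC(r,\infty)$ with $r<0$) lies in no $A_p$, so \propref{bmobloap}\eqref{bmoap} is unavailable there; the fix is to route through $RH_\infty$ by inverting the weight and appeal to \propref{bmobloap}\eqref{buoap} together with the $BUO$--$BLO$ duality. I expect that detour --- and the check that it really yields a $BLO$ (resp. $BUO$) conclusion --- to be the only step needing any care.
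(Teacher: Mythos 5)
Your proof is correct, and its skeleton is the same as the paper's: every implication is funneled through \propref{bmobloap} by means of the scaling, shrinking, and self-improvement axioms, and the ``only if'' halves are identical (apply \propref{bmobloap} and rescale by the reciprocal exponent). The differences lie in the case bookkeeping for the converses. For part (\ref{charbmo}) the paper treats the one-signed cases $0<r<s$ and $r<s<0$ and reduces a general interval to these by shrinking away from zero (this is exactly Exercise~\ref{RCcrosszero}), whereas you treat the zero-crossing case $r<0<s$ head-on by scaling directly into an $A_p$ class; that variant is slightly more economical, since no appeal to Theorem~\ref{ainftyasap} is needed there. For part (\ref{charblo}) with $r\le 0$, the paper's Figure~\ref{fig:blornegative} stays inside the weight world: scale, self-improve ($RH_s\Rightarrow A_q$), concatenate, and scale back so as to reduce to the case $r>0$; you instead invert the weight, land in $RH_\infty$, invoke \propref{bmobloap}(\ref{buoap}), and return through the elementary identity that $g\in BLO$ if and only if $-g\in BUO$, together with the fact that $BLO$ is a cone --- a valid alternative that trades the self-improvement theorem for the $BUO$ characterization (your verification of the duality and of positive homogeneity is the only non-quoted ingredient, and it is correct). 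Finally, you actually prove part (\ref{charbuo}), via the same duality applied to $w^{-1}$, whereas the paper defers it to Exercise~\ref{exe:charabuo}. One reading you made --- taking $\alpha\neq 0$ in \propref{bmobloap}(\ref{bmoap}) --- is the intended one (with $\alpha=0$ the ``only if'' direction would be vacuous), and the paper's Figure~\ref{fig:bmoimpliesrc} uses it in the same way, scaling by $1/\alpha$.
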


\begin{proof} Here is the visual proof of Corollary \ref{characbmoblo} by means of Proposition \ref{bmobloap}.

\begin{figure}[H] 
   \centering
   \includegraphics[width=5in]{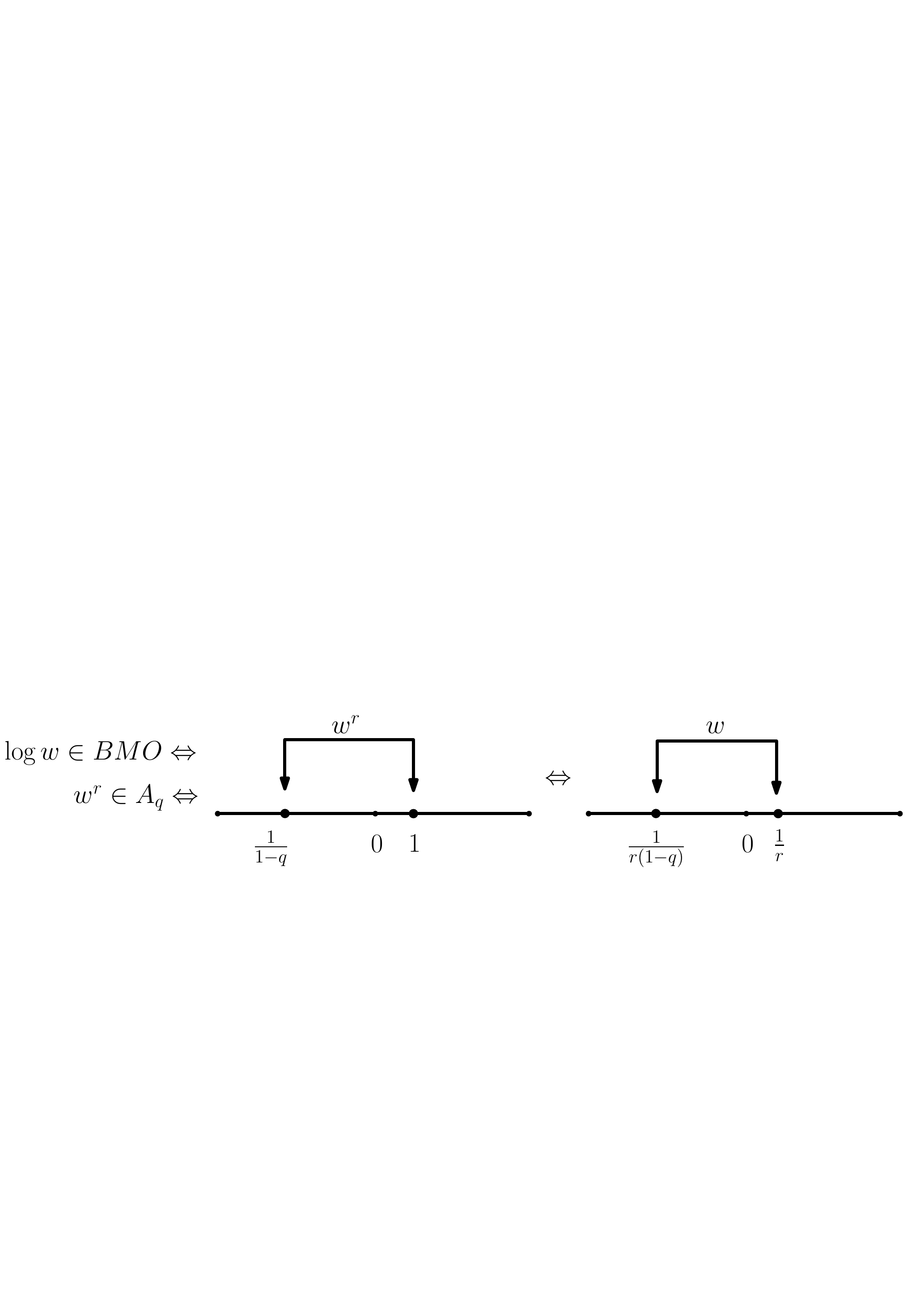}
   \caption{Proof of the ``only if'' part of  Corollary \ref{characbmoblo} (\ref{charbmo}), for $0 < r $, using Proposition \ref{bmobloap} (\ref{bmoap}). In the case of $r<0$, only the positions of the points $1/r$ and $1/(r(1-q))$ will be switched in the figure.}
   \label{fig:bmoimpliesrc}
\end{figure}

\begin{figure}[H] 
   \centering
   \includegraphics[width=5in]{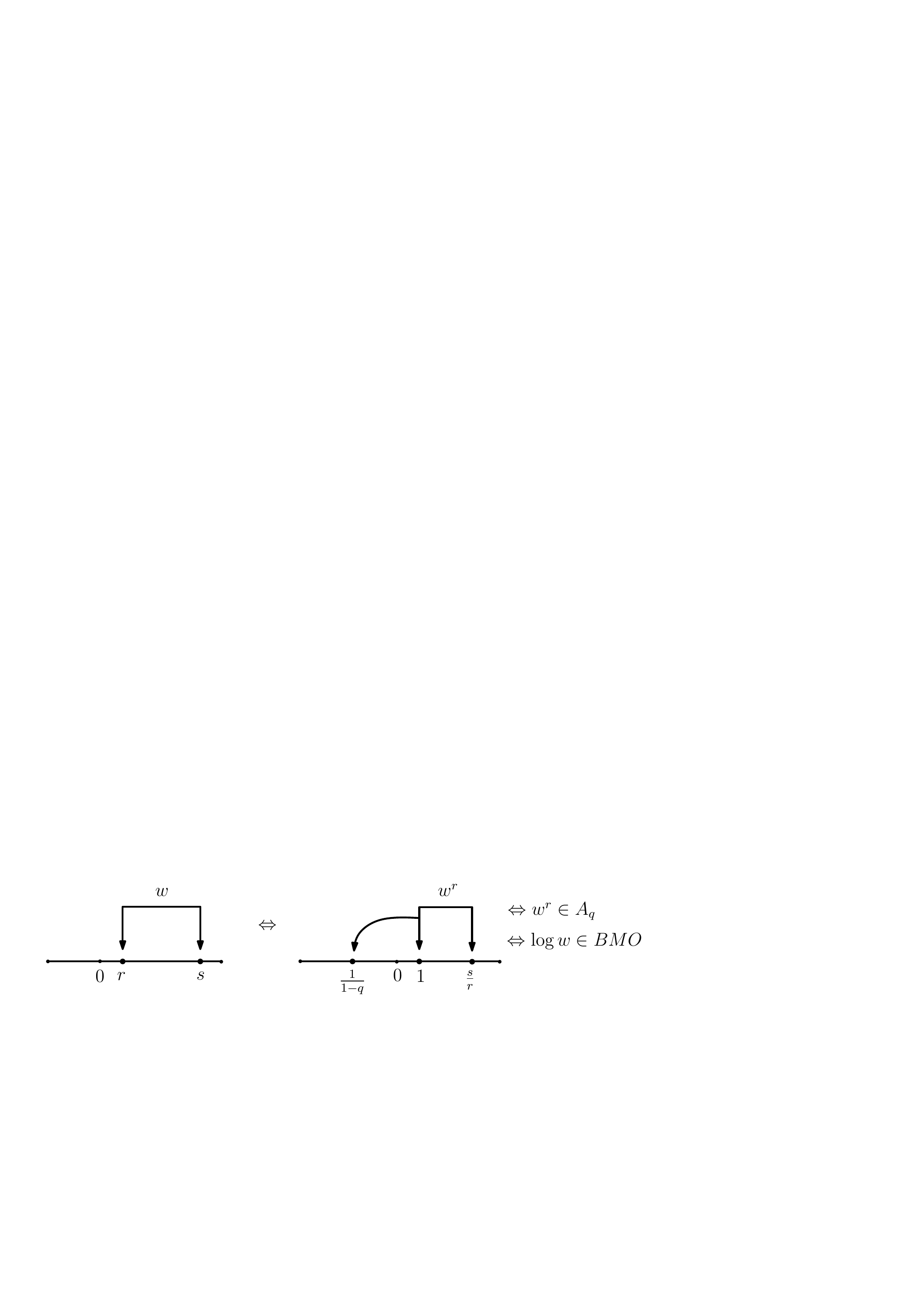}
   \caption{Proof of the ``if'' part of Corollary \ref{characbmoblo} (\ref{charbmo}), for $0 < r < s $, by using Proposition \ref{bmobloap} (\ref{bmoap}). The case $r < s < 0$ reduces to this case after scaling by $-1$. By the shrinking property, the general case $r < s$ reduces to one of the previous two cases (see Exercise \ref{RCcrosszero} in Section \ref{secc:ex}).}
   \label{fig:bmorspositive}
\end{figure}

\begin{figure}[H] 
   \centering
   \includegraphics[width=4.6in]{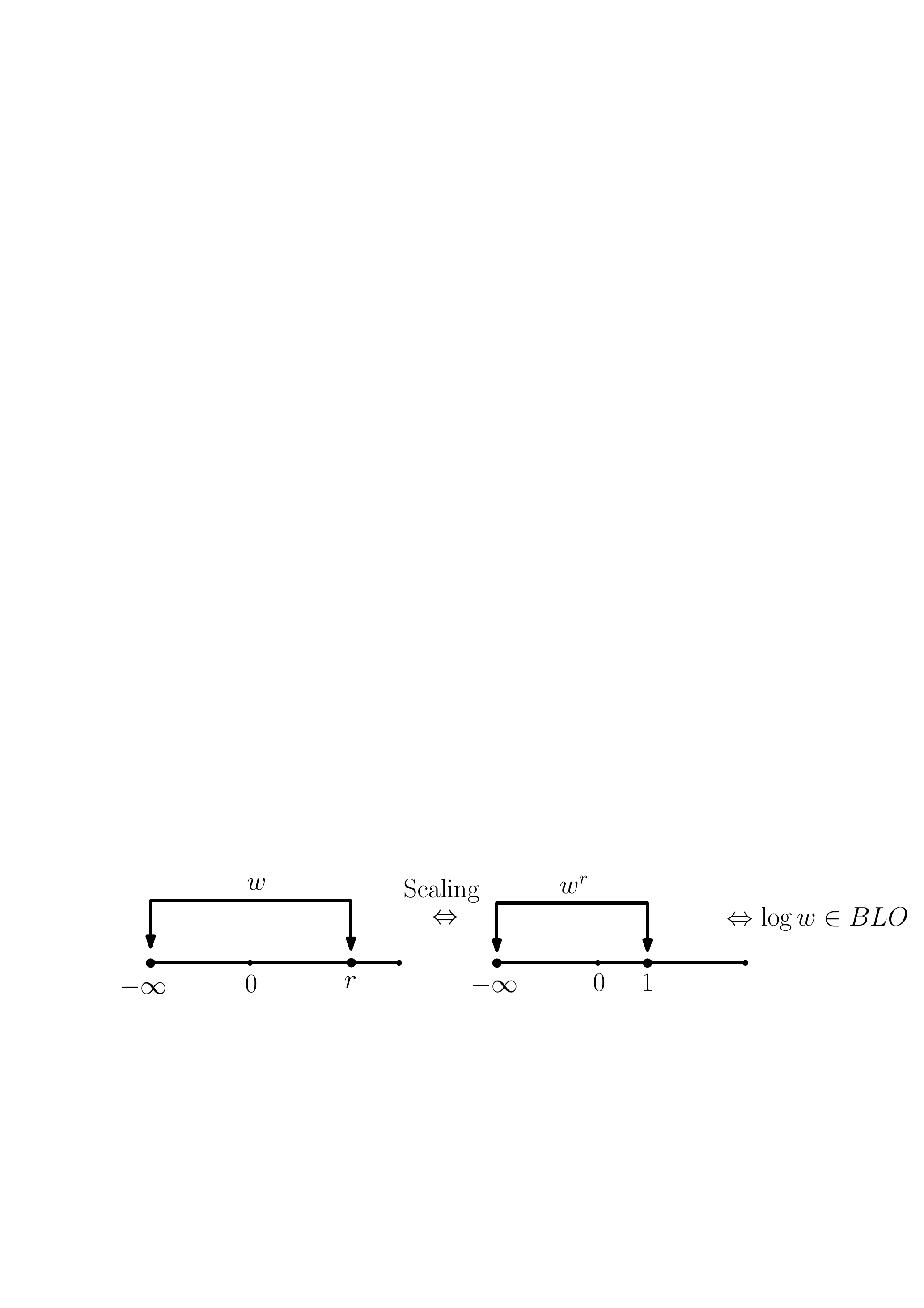}
   \caption{Proof of Corollary \ref{characbmoblo} (\ref{charblo}) for the case $r> 0$, by using Proposition \ref{bmobloap} (\ref{bloap}).}
   \label{fig:blorpositive}
\end{figure}

\begin{figure}[H] 
   \centering
   \includegraphics[width=5in]{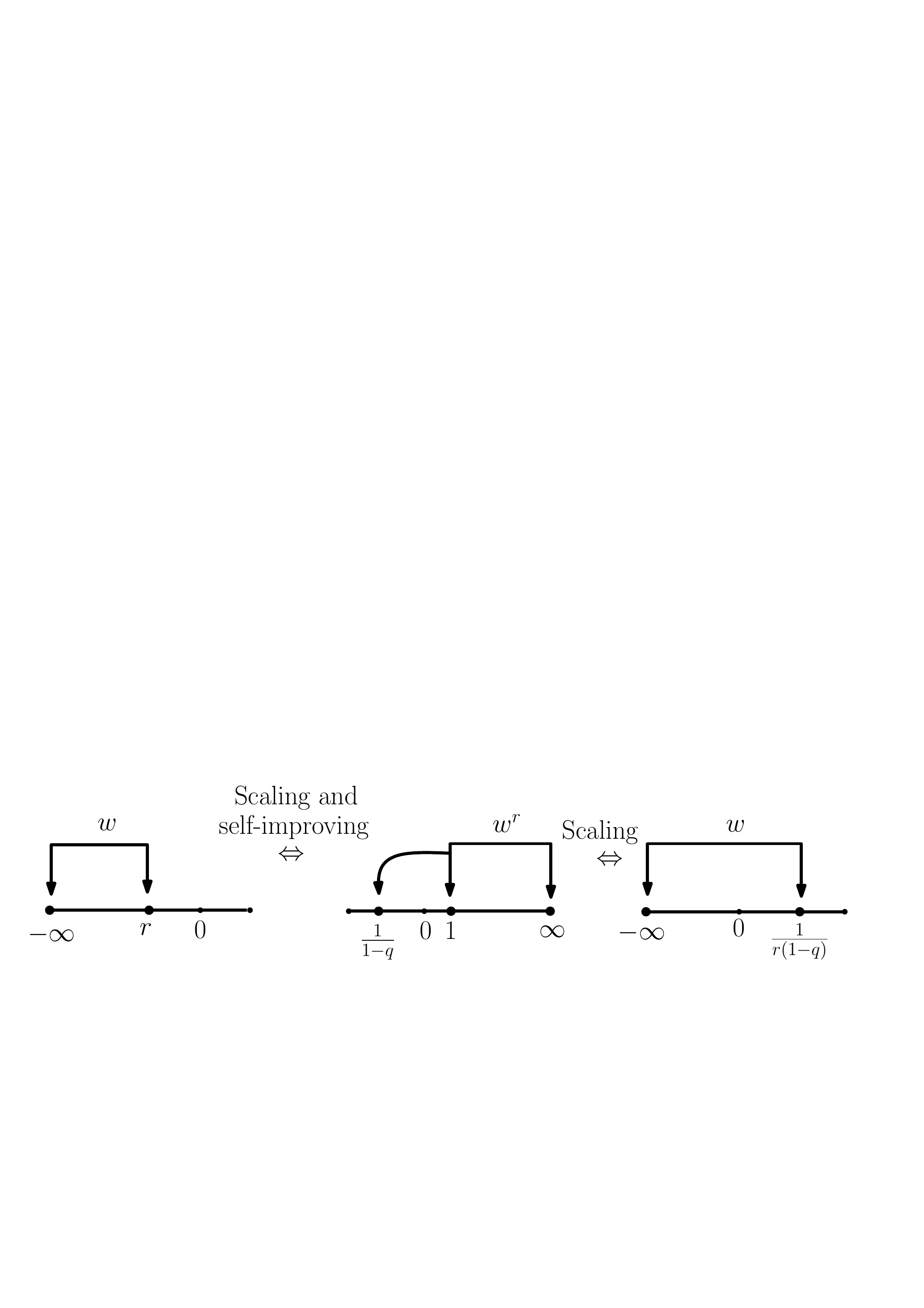}
   \caption{Proof of Corollary \ref{characbmoblo} (\ref{charblo}) for the case $r < 0$. It is shown how after scaling and self-improving, this case reduces to the case $r > 0$. }
   \label{fig:blornegative}
\end{figure}
The proof of Corollary \ref{characbmoblo} \eqref{charbuo} is left as an exercise (see Exercise \ref{exe:charabuo} in Section \ref{secc:ex}).

\end{proof}

\section{Interpolation}\label{secc:interpolation}

\begin{theor}\label{interpolation}
Let $r_1\leq r_2<0<s_1\leq s_2$, and $u\in RC(r_1,s_1,C_1)$ and
$v\in RC(r_2,s_2,C_2)$. Then, for every $\theta\in[0,1]$, we have $u^\theta v^{1-\theta}\in RC(r_\theta,s_\theta,C_\theta)$, where $C_\theta := C_1^\theta C_2^{1-\theta} $ and
$$
\frac{1}{r_\theta}:=\frac{\theta}{r_1} + \frac{1-\theta}{r_2} < 0, \quad  \quad \frac{1}{s_\theta}:=\frac{\theta}{s_1} + \frac{1-\theta}{s_2} > 0.
$$
Visually, this is illustrated in Figure \ref{fig:rcinterpol}.
\begin{figure}[H] 
   \centering
   \includegraphics[width=5.2in]{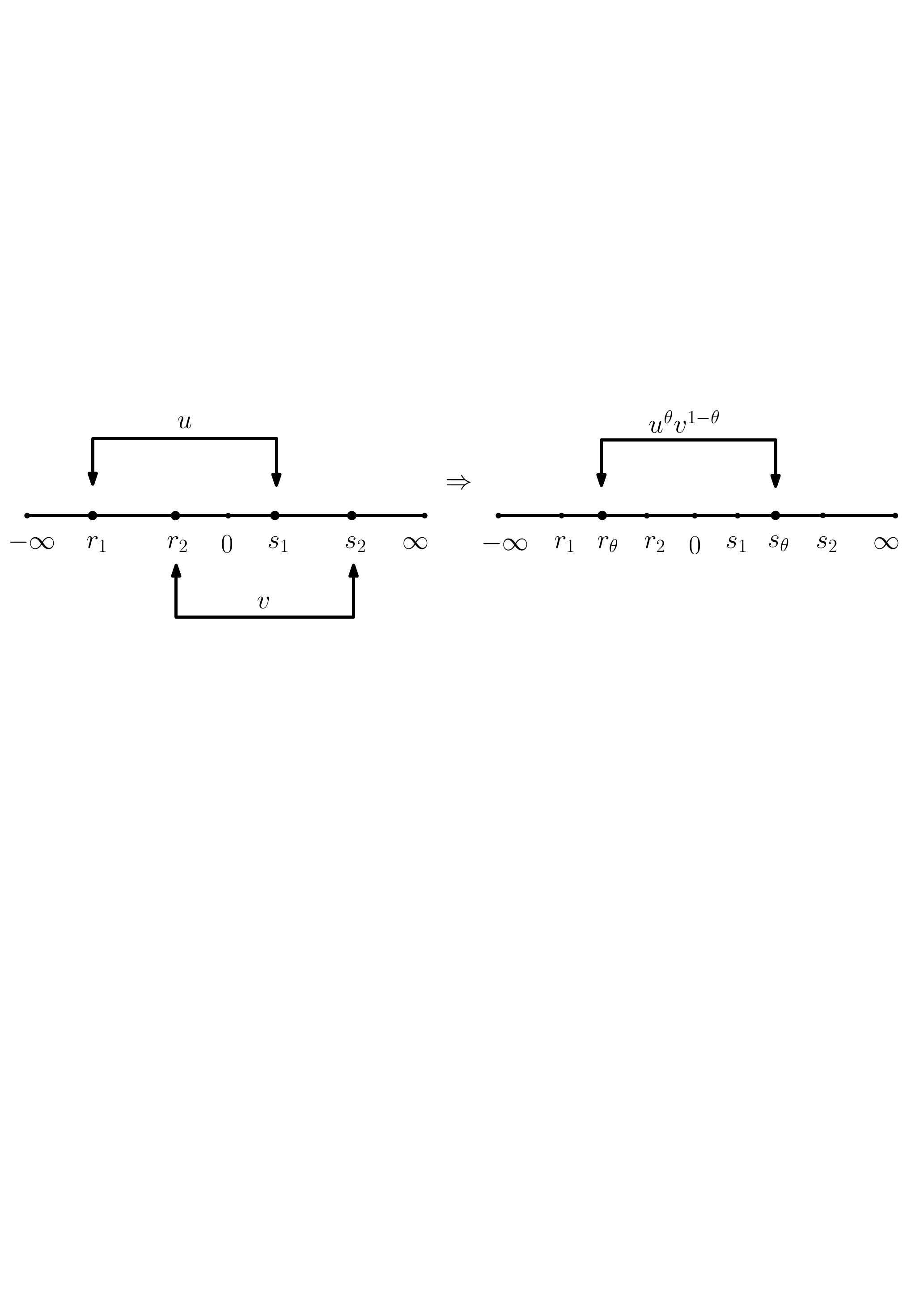}
   \caption{Interpolation of two reverse classes crossing zero.}
   \label{fig:rcinterpol}
\end{figure}
\end{theor}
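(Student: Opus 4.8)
The plan is to verify directly the two inequalities defining $u^\theta v^{1-\theta}\in RC(r_\theta,s_\theta,C_\theta)$, after disposing of the endpoint cases $\theta\in\{0,1\}$, where the assertion collapses to the hypothesis on $v$ and on $u$ respectively. So fix $\theta\in(0,1)$ and $B\in\B$. The left-hand inequality $(u^\theta v^{1-\theta})(r_\theta,B)\le (u^\theta v^{1-\theta})(s_\theta,B)$ is nothing but the natural inequality \eqref{natural} for the weight $u^\theta v^{1-\theta}$ with $p=r_\theta<0<s_\theta=q$, so there is nothing to prove there; all the work lies in the reverse inequality $(u^\theta v^{1-\theta})(s_\theta,B)\le C_1^\theta C_2^{1-\theta}\,(u^\theta v^{1-\theta})(r_\theta,B)$.

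The engine of the argument is a generalized Hölder inequality for $p$-means: if $m,m_1,m_2$ are nonzero reals of the same sign with $\tfrac{1}{m}=\tfrac{\theta}{m_1}+\tfrac{1-\theta}{m_2}$, then, for positive weights $f,g$,
\[
(f^\theta g^{1-\theta})(m,B)\ \le\ f(m_1,B)^\theta\,g(m_2,B)^{1-\theta}\quad\text{if }m>0,
\]
and the inequality reverses if $m<0$ (with the usual conventions when $m_1$ or $m_2$ equals $\pm\infty$). When $m>0$ this is Hölder's inequality on $B$ with the conjugate exponent pair $\bigl(\tfrac{m_1}{\theta m},\tfrac{m_2}{(1-\theta)m}\bigr)$, which lies in $(1,\infty)$ precisely because $\tfrac{1}{m}>\tfrac{\theta}{m_1}$ and $\tfrac{1}{m}>\tfrac{1-\theta}{m_2}$ force $\theta m<m_1$ and $(1-\theta)m<m_2$; when $m<0$ the same pair is still admissible ($\tfrac1m<\tfrac{\theta}{m_1}$ now forces $\theta m>m_1$, hence $\tfrac{m_1}{\theta m}>1$, and similarly for $m_2$), but the final step of raising to the power $1/m<0$ reverses the inequality.

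With this in hand the proof is a three-step concatenation. First, apply the generalized Hölder inequality with $(m,m_1,m_2)=(s_\theta,s_1,s_2)$, all positive, and $(f,g)=(u,v)$: this gives $(u^\theta v^{1-\theta})(s_\theta,B)\le u(s_1,B)^\theta\,v(s_2,B)^{1-\theta}$. Second, use the defining inequalities $u(s_1,B)\le C_1\,u(r_1,B)$ and $v(s_2,B)\le C_2\,v(r_2,B)$ of $u\in RC(r_1,s_1,C_1)$ and $v\in RC(r_2,s_2,C_2)$ to get $(u^\theta v^{1-\theta})(s_\theta,B)\le C_1^\theta C_2^{1-\theta}\,u(r_1,B)^\theta\,v(r_2,B)^{1-\theta}$. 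Third, apply the generalized Hölder inequality again, now with $(m,m_1,m_2)=(r_\theta,r_1,r_2)$, all negative, in its reversed form, yielding $u(r_1,B)^\theta\,v(r_2,B)^{1-\theta}\le (u^\theta v^{1-\theta})(r_\theta,B)$. Chaining these three estimates gives the desired reverse inequality with reversal constant $C_\theta=C_1^\theta C_2^{1-\theta}$; the first and third steps also show that $(u^\theta v^{1-\theta})^{r_\theta}=u^{\theta r_\theta}v^{(1-\theta)r_\theta}$ is locally integrable, since its mean over any $B\in\B$ is dominated by $u(r_1,B)^{\theta r_\theta}\,v(r_2,B)^{(1-\theta)r_\theta}<\infty$.

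I expect the only real obstacle to be bookkeeping: one must check that the exponent pairs appearing in the two invocations of Hölder's inequality are genuinely conjugate and lie in $[1,\infty]$ — which is exactly where the harmonic-mean relations $\tfrac1{r_\theta}=\tfrac\theta{r_1}+\tfrac{1-\theta}{r_2}$ and $\tfrac1{s_\theta}=\tfrac\theta{s_1}+\tfrac{1-\theta}{s_2}$ get used — and that every inequality is oriented correctly through the sign flips coming from the negative exponents $r_1,r_2,r_\theta$. Unlike the corollaries earlier in the paper, this statement genuinely mixes two distinct weights and so does not reduce to the three axioms of the visual formalism; the direct computation above is needed, even though the picture in Figure \ref{fig:rcinterpol} invites one to read it as the ``interpolation'' analogue of the scaling axiom.
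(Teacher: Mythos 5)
Your proof is correct and takes essentially the same route as the paper's: two applications of H\"older's inequality with the conjugate pairs $\bigl(\tfrac{s_1}{\theta s_\theta},\tfrac{s_2}{(1-\theta)s_\theta}\bigr)$ and $\bigl(\tfrac{r_1}{\theta r_\theta},\tfrac{r_2}{(1-\theta)r_\theta}\bigr)$, with the hypotheses on $u$ and $v$ inserted in between, and your reversed-H\"older step for the negative exponents is exactly the paper's step of raising the $r_\theta$-mean estimate to the power $-1/r_\theta$. The additional bookkeeping you supply (conjugacy of the exponent pairs and local integrability of $(u^\theta v^{1-\theta})^{r_\theta}$) is consistent with, and slightly more explicit than, the paper's argument.
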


\begin{proof} Assume $\theta\in(0,1)$ (the cases $\theta=0$ or $\theta=1$ being trivial).
We need to prove that 
\be\label{tmf1} \left(\frac{1}{\mu(B)}\int_B u^{s_\theta\theta} v^{s_\theta(1-\theta)}
\,d\mu\right)^{1/s_\theta}\left( \frac{1}{\mu(B)}\int_B
u^{r_\theta\theta}v^{(1-\theta)r_\theta}\,d\mu\right)^{-1/r_\theta}\leq
C_\theta. 
\ee
By H\"{o}lder's inequality with the exponents $q:=\frac{r_1}{\theta r_\theta}$ and $q':=\frac{r_2}{(1-\theta)r_\theta}$, one gets (recall the notation in \eqref{pmeanB})
\be\begin{split}\label{tf1} 
\left( \frac{1}{\mu(B)}\int_B
u^{r_\theta\theta}v^{(1-\theta)r_\theta}\,d\mu\right)^{-1/r_\theta}&\leq u(r_1,B)^{-\theta} v(r_2,B)^{-(1-\theta)}.
\end{split}\ee 
Again, applying H\"{o}lder's inequality with the exponents $r:=\frac{s_1}{\theta s_\theta}$ and $r':=\frac{s_2}{(1-\theta)s_\theta}$, and using the assumptions $u\in
RC(r_1,s_1,C_1)$ and $v\in RC(r_2,s_2,C_2)$,
\be\begin{split}\label{tf2} \left(\frac{1}{\mu(B)}\int_B
u^{s_\theta\theta} v^{s_\theta(1-\theta)}\,d\mu\right)^{1/s_\theta}
&\leq w(s_1,B)^\theta w(s_2, B)^{1-\theta}\\&\leq C_1^\theta C_2^{1-\theta} u(r_1,B)^{\theta}v(r_2,B)^{1-\theta}.
\end{split}\ee 
Therefore, \eqref{tmf1} follows from \eqref{tf1} and
\eqref{tf2}.
\end{proof}

\begin{cor}\label{apinterpolation}
Let $w_j\in A_{p_j}$, $j=1,2,$ where $1\leq p_1<p_2<\infty$, then
$w_1^\theta w_2^{1-\theta}\in A_{p_3}$ where $p_3=\theta
p_1+(1-\theta)p_2$ and $\theta\in(0,1)$.
\end{cor}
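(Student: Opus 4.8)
The plan is to deduce this corollary directly from the interpolation theorem (Theorem~\ref{interpolation}) by translating the $A_p$ conditions into the reverse-class language, exactly as was done in the ``practice'' section. Recall that for $1 < p < \infty$ we have $A_p = RC\big(\tfrac{1}{1-p},1\big)$, and $A_1 = RC(-\infty,1)$. So the hypotheses $w_j \in A_{p_j}$ become $w_j \in RC\big(\tfrac{1}{1-p_j},1\big)$ for $j=1,2$ (with the convention $\tfrac{1}{1-p_1} = -\infty$ when $p_1 = 1$). In the notation of Theorem~\ref{interpolation} this is the case $r_j := \tfrac{1}{1-p_j} < 0$ and $s_1 = s_2 = 1 > 0$; note the required ordering $r_1 \le r_2 < 0 < s_1 \le s_2$ holds precisely because $p_1 < p_2$ forces $\tfrac{1}{1-p_1} \le \tfrac{1}{1-p_2}$.

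First I would invoke Theorem~\ref{interpolation} with these choices to conclude that $w_1^\theta w_2^{1-\theta} \in RC(r_\theta, s_\theta)$ for every $\theta \in (0,1)$, where $\tfrac{1}{r_\theta} = \tfrac{\theta}{r_1} + \tfrac{1-\theta}{r_2}$ and $\tfrac{1}{s_\theta} = \tfrac{\theta}{s_1} + \tfrac{1-\theta}{s_2} = \theta + (1-\theta) = 1$, so $s_\theta = 1$. Then the only remaining task is the arithmetic check that $r_\theta = \tfrac{1}{1-p_3}$ with $p_3 = \theta p_1 + (1-\theta)p_2$, which turns $RC(r_\theta, 1)$ back into $A_{p_3}$. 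Concretely, substituting $r_j = \tfrac{1}{1-p_j}$ gives $\tfrac{1}{r_\theta} = \theta(1-p_1) + (1-\theta)(1-p_2) = 1 - \big(\theta p_1 + (1-\theta)p_2\big) = 1 - p_3$, which is exactly what is needed. The endpoint cases $\theta = 0, 1$ are trivial, and the case $p_1 = 1$ (so $r_1 = -\infty$) is handled the same way with the convention that $\tfrac{\theta}{-\infty} = 0$, giving $\tfrac{1}{r_\theta} = (1-\theta)(1-p_2)$, i.e.\ $p_3 = 1 + (1-\theta)(p_2 - 1) = \theta \cdot 1 + (1-\theta)p_2$, consistent with the stated formula.

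There is essentially no obstacle here: the corollary is a pure specialization of Theorem~\ref{interpolation}, and the work is confined to the index bookkeeping $\tfrac{1}{1-p_3} = \theta\tfrac{1}{1-p_1} + (1-\theta)\tfrac{1}{1-p_2}$. If one wishes to be fully in the spirit of the paper, the whole argument can be presented as a single diagram: draw the two arrows for $w_1 \in RC\big(\tfrac{1}{1-p_1},1\big)$ and $w_2 \in RC\big(\tfrac{1}{1-p_2},1\big)$, both ending at $1$ and straddling $0$, and apply the interpolation picture (Figure~\ref{fig:rcinterpol}) to obtain the arrow $RC\big(\tfrac{1}{1-p_3},1\big) = A_{p_3}$, with the left endpoint being the ``harmonic-type'' interpolant of the two left endpoints. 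The quantitative statement $[w_1^\theta w_2^{1-\theta}]_{A_{p_3}} \le [w_1]_{A_{p_1}}^\theta [w_2]_{A_{p_2}}^{1-\theta}$ also falls out for free from the $C_\theta = C_1^\theta C_2^{1-\theta}$ bound in Theorem~\ref{interpolation}, should one want to record it.
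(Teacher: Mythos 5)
Your proposal is correct and follows essentially the same route as the paper: recast $A_{p_j}$ as $RC\big(\tfrac{1}{1-p_j},1\big)$, apply Theorem~\ref{interpolation} with $s_1=s_2=1$, and check the index arithmetic $\tfrac{1}{r_\theta}=\theta(1-p_1)+(1-\theta)(1-p_2)=1-p_3$. Your additional remarks (the $p_1=1$ convention and the bound $C_\theta=C_1^\theta C_2^{1-\theta}$) are consistent with, and slightly more explicit than, the paper's one-line argument.
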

\begin{proof}
The proof follows immediately from \thmref{interpolation} after the
statement is recast in terms of reverse classes as follows: if
$w_j\in RC(\frac{1}{1-p_j},1,C_j)$, where $j=1,2,$ and $1\leq
p_1<p_2<\infty$, then $w_1^\theta w_2^{1-\theta}\in
RC(\frac{1}{1-[\theta p_1+(1-\theta)p_2]},1,C_\theta),\,\theta\in(0,1)$.
\end{proof}

\section{Factorization}\label{secc:factorization}

The basic factorization theorem for $A_p$ weights reads 

\begin{theor}\label{apfactorization} $w\in A_p$ if and only if
$w=w_1w_2^{1-p}$, for some $w_1, w_2\in A_1$. 
\end{theor}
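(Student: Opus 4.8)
The plan is to prove the two implications separately. The ``if'' direction falls within the visual formalism; the ``only if'' direction is the Jones factorization theorem and must be argued by hand, via the Rubio de Francia iteration scheme.

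\emph{Sufficiency.} Assume $w=w_1w_2^{1-p}$ with $w_1,w_2\in A_1=RC(-\infty,1)$. Fix any $\theta\in(0,1)$ and write $w=u^\theta v^{1-\theta}$, where $u:=w_1^{1/\theta}$ and $v:=w_2^{(1-p)/(1-\theta)}$. By the scaling property (\thmref{rcmain}), $u\in RC(-\infty,\theta)$ with $[u]_{RC(-\infty,\theta)}\le[w_1]_{A_1}^{1/\theta}$, and, scaling by the negative exponent $(1-p)/(1-\theta)$, $v\in RC\big(\tfrac{1-\theta}{1-p},\infty\big)$ with reversal constant at most $[w_2]_{A_1}^{(p-1)/(1-\theta)}$. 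Since $\tfrac{1-\theta}{1-p}<0<\theta$, \thmref{interpolation} applies with interpolation parameter $\theta$ and yields $w=u^\theta v^{1-\theta}\in RC(r_\theta,s_\theta)$; the formula for $r_\theta$, with $r_1=-\infty$ and $r_2=\tfrac{1-\theta}{1-p}$, gives $\tfrac{1}{r_\theta}=1-p$, and the formula for $s_\theta$, with $s_1=\theta$ and $s_2=\infty$, gives $\tfrac{1}{s_\theta}=1$; that is, $w\in RC(\tfrac{1}{1-p},1)=A_p$, with $[w]_{A_p}\le[w_1]_{A_1}[w_2]_{A_1}^{p-1}$ (the powers of $\theta$ cancel). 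Equivalently, one may just test the $A_p$ inequality for $w$ on a ball $B\in\B$, using the $A_1$ inequalities for $w_1,w_2$ and the pointwise bounds $w_2^{1-p}\le(\essinf_B w_2)^{1-p}$ and $w_1^{1/(1-p)}\le(\essinf_B w_1)^{1/(1-p)}$ on $B$.

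\emph{Necessity.} Given $w\in A_p$, it suffices to construct $w_2\in A_1$ for which $w_1:=w\,w_2^{p-1}$ again lies in $A_1$, since then $w_1w_2^{1-p}=w$. Two ingredients are required. First, by the self-improving properties collected in \thmref{ainftyasap} and standard $A_p$ theory, $w\in A_p$ implies that the Hardy--Littlewood maximal operator $\mathcal{M}$ is bounded on $L^p(w\,d\mu)$; and since $w^{1-p'}\in A_{p'}$ by \corref{pinq} ($p'$ being the H\"older conjugate of $p$), $\mathcal{M}$ is bounded on $L^{p'}(w^{1-p'}\,d\mu)$ as well. Write $a,b$ for these operator norms. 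Second, the Rubio de Francia operators $R_1g:=\sum_{k\ge0}(2a)^{-k}\mathcal{M}^kg$ and $R_2g:=\sum_{k\ge0}(2b)^{-k}\mathcal{M}^kg$ (on nonnegative $g$) satisfy, by subadditivity of $\mathcal{M}$: $g\le R_ig$; the weighted norm of $R_ig$ is comparable to that of $g$; and $\mathcal{M}(R_1g)\le2a\,R_1g$, $\mathcal{M}(R_2g)\le2b\,R_2g$. In particular $R_1g\in A_1$ with $[R_1g]_{A_1}\le2a$ and $R_2g\in A_1$ with $[R_2g]_{A_1}\le2b$, provided $g$ lies in the space that keeps $R_ig$ finite a.e. One then extracts $w_1,w_2$ by a fixed-point (iteration) argument: starting from a strictly positive $\phi_0\in L^p(w)\cap L^{p'}(w^{1-p'})$, alternate $R_1$, $R_2$, and the exponent $\tfrac{1}{p-1}$ (which mediates between the two weighted norms) so as to land on a pair of $A_1$ weights realizing $w=w_1w_2^{1-p}$ exactly.

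The main obstacle is precisely this last step. Manufacturing an $A_1$ weight dominating a prescribed function is routine once $\mathcal{M}$ is bounded on the correct space, but arranging that the two $A_1$ memberships hold \emph{simultaneously with} the exact identity $w=w_1w_2^{1-p}$ requires the delicate exponent bookkeeping that is the core of Jones' theorem, and it does not reduce to manipulations inside the visual formalism; the sufficiency direction, by contrast, is a one-line consequence of scaling and interpolation.
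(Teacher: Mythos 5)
The paper itself does not prove \thmref{apfactorization}: it is quoted as Jones' factorization theorem, with pointers to \cite{Jones80}, \cite{CJRdF}, \cite{RdeF84} and, for spaces of homogeneous type, to \cite[Chapter II]{StrTor}. Measured against that, your ``if'' half is complete and correct: the scaling-plus-interpolation computation is right (with $r_1=-\infty$ and $s_2=\infty$ the H\"older exponents in the proof of \thmref{interpolation} degenerate to $1$ and $\infty$, which is harmless), it yields the correct bound $[w]_{A_p}\le [w_1]_{A_1}[w_2]_{A_1}^{p-1}$ with the powers of $\theta$ cancelling as you say, and the direct verification you sketch as an alternative is the standard one.

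The ``only if'' half has a genuine gap, which you acknowledge yourself: the Rubio de Francia algorithm as you set it up produces, from a nonnegative $g$, $A_1$ majorants $R_1g$ and $R_2g$, but nothing in the outline forces the exact identity $w=w_1w_2^{1-p}$; ``alternate $R_1$, $R_2$, and the exponent $\tfrac{1}{p-1}$ so as to land on'' such a pair is a plan, not an argument, and that step is precisely the content of the theorem. The missing idea in the Coifman--Jones--Rubio de Francia proof you are gesturing at is to conjugate by powers of $w$ first: the bounds for $\mathcal{M}$ on $L^p(w)$ and on $L^{p'}(w^{1-p'})$ become unweighted bounds on $L^{pp'}(d\mu)$ for the single (quasi-sublinear) operator $Uf:=\big(w^{1/p}\mathcal{M}(f^{p'}w^{-1/p})\big)^{1/p'}+\big(w^{-1/p}\mathcal{M}(f^{p}w^{1/p})\big)^{1/p}$; running the algorithm on $U$ from a strictly positive $f_0\in L^{pp'}(d\mu)$ gives $\varphi>0$ finite a.e.\ with $U\varphi\le C\varphi$, and one sets $w_1:=\varphi^{p}w^{1/p}$, $w_2:=\varphi^{p'}w^{-1/p}$. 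Since $p'(p-1)=p$, these satisfy $w_1w_2^{1-p}=w$ identically, and the two halves of $U\varphi\le C\varphi$ are exactly the pointwise $A_1$ inequalities $\mathcal{M}w_1\le C^{p}w_1$ and $\mathcal{M}w_2\le C^{p'}w_2$. Without this (or an equivalent construction) the necessity direction is unproved. A further caveat specific to this paper's setting: $A_p$ here is relative to an open set $\Omega$ with balls in $\B$, so even your input ``$w\in A_p$ implies $\mathcal{M}$ is bounded on $L^p(w\,d\mu)$'' is not contained in the paper and needs the machinery of \cite[Chapter II]{StrTor}, which is exactly why the paper cites that reference for this theorem rather than proving it.
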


Theorem \ref{apfactorization} was first proved, in the Euclidean setting, by P. Jones in \cite{Jones80} and other proofs then appeared in \cite{CJRdF, RdeF84}. For a proof in spaces of homogeneous type, see \cite[Chapter II]{StrTor}. A thorough discussion of this topic can be found, for instance, in   \cite[Sections 1.1 and 6.2]{CMP}, \cite[Section 5.5]{Duo}, and \cite[Chap IX]{Tor}.  

The visual rendition of the factorization theorem yields a certain intersection property for the arrows, see Figure \ref{fig:factorviaa1}. 

\begin{figure}[H] 
   \centering
   \includegraphics[width=5.2in]{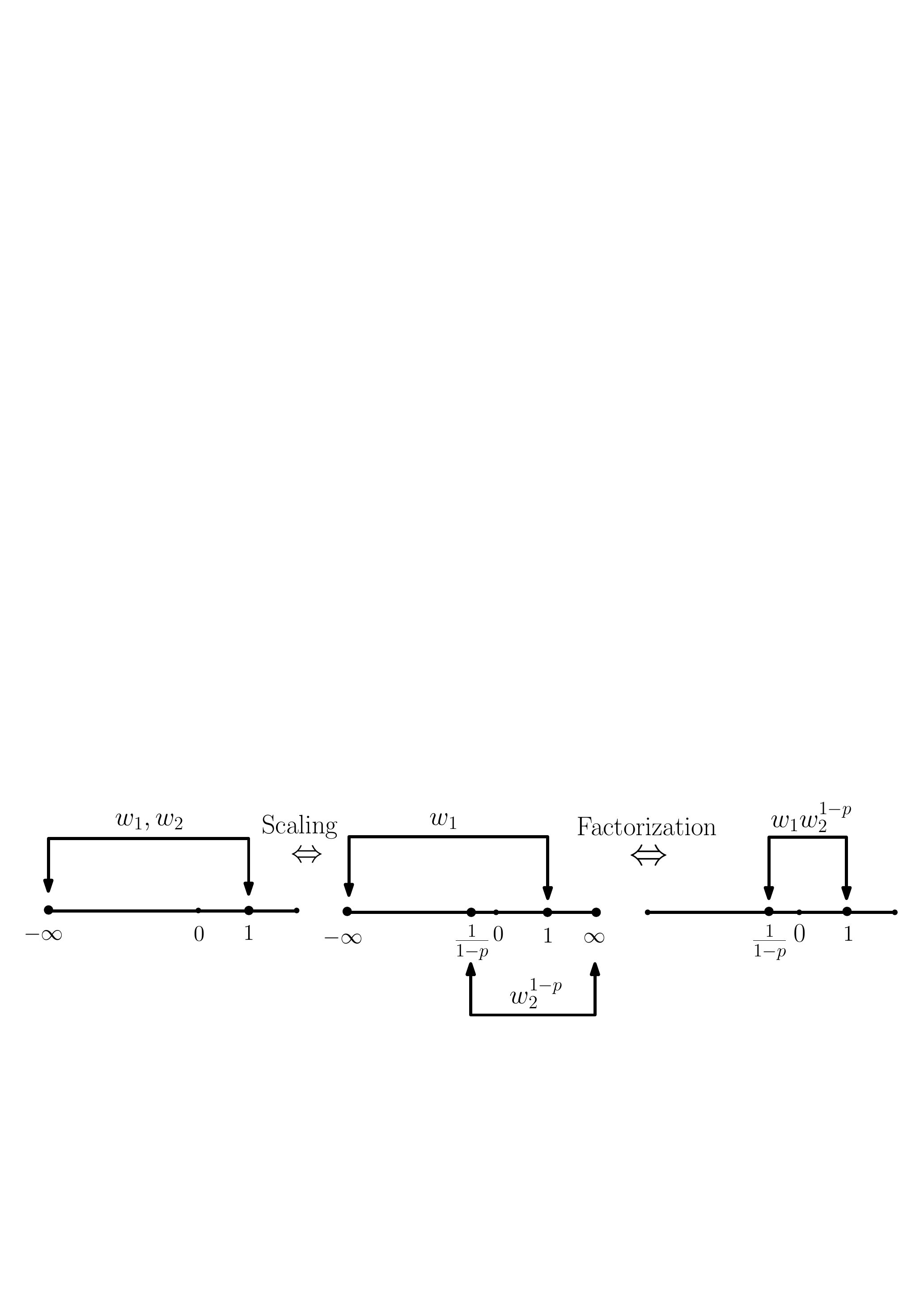}
   \caption{The factorization of $A_p$ weights as an intersection property for the arrows.}
   \label{fig:factorviaa1}
\end{figure}
 
Theorem \ref{apfactorization} can be extended as follows:

\begin{theor}\label{factorization}
Let $r<0<s$. Then $w\in RC(r,s)$ if and only if $w=uv$ for some
$u\in RC(r,\infty)$ and $v\in RC(-\infty,s)$. Visually,
\begin{figure}[H] 
   \centering
   \includegraphics[width=5.3in]{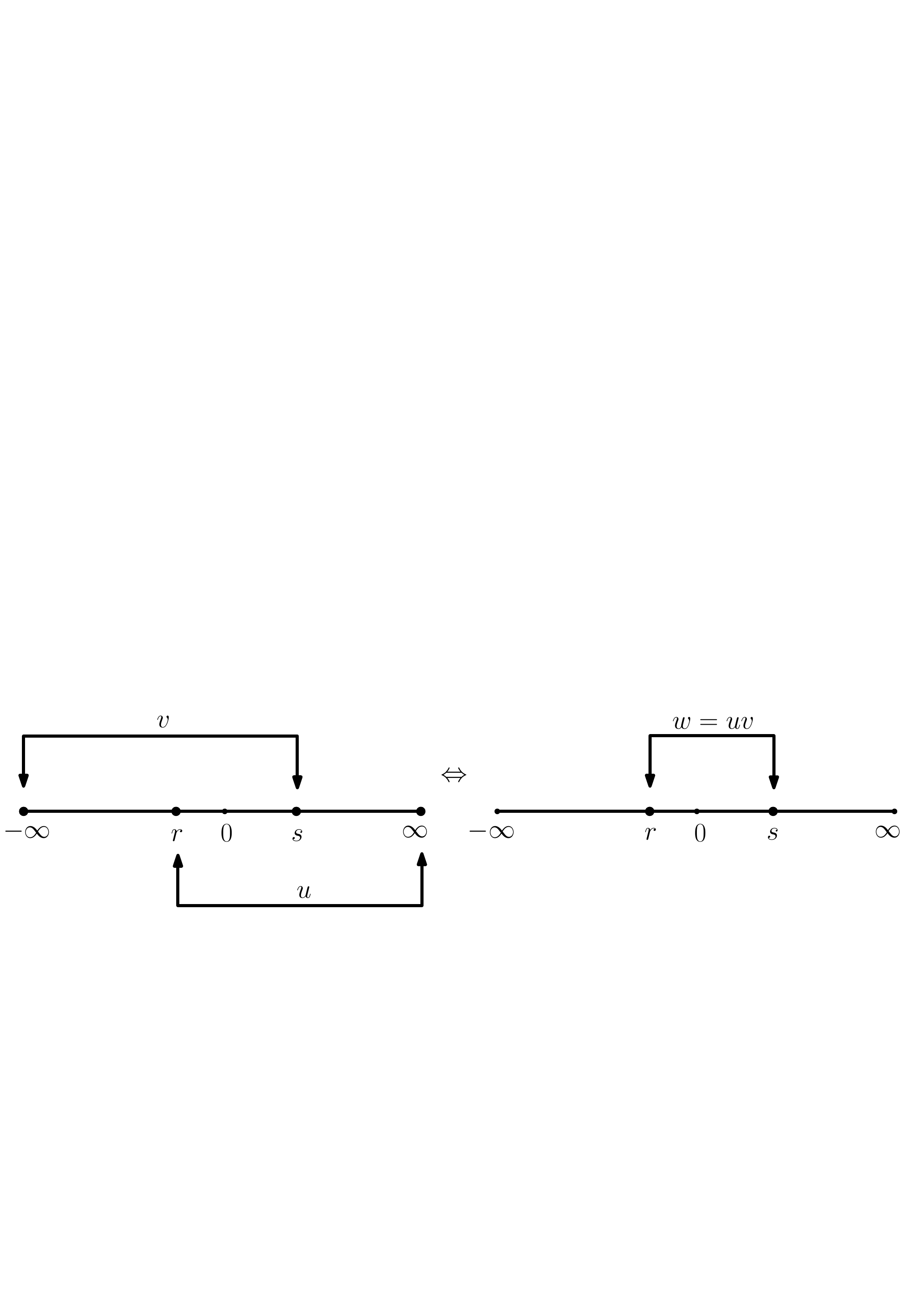}
   \caption{The factorization of a weight $w$ belonging to a reverse class crossing zero as an intersection property for the arrows.}
   \label{product}
\end{figure}
\end{theor}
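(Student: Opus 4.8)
The plan is to prove the two implications separately. The ``if'' direction is essentially immediate from the visual formalism, so I would dispatch it first: suppose $w = uv$ with $u \in RC(r,\infty)$ and $v \in RC(-\infty,s)$. By the scaling property (\thmref{rcmain}) applied with $\theta = 1/2$, and recalling that $RC(r,\infty)$ means $w(\infty,B) \leq [u]_{RC(r,\infty)} w(r,B)$ while $RC(-\infty,s)$ means $w(s,B) \leq [v]_{RC(-\infty,s)} w(-\infty,B)$, I would estimate the $s$-mean of $w=uv$ from above and the $r$-mean from below. Concretely, on any $B \in \B$ we have the pointwise bounds $u \leq u(\infty,B) \leq [u] \, u(r,B)$ and $v \leq v(s,B) \leq [v]\, v(-\infty,B) \leq [v]\, v(y)$ for a.e.\ $y$; combining these with the trivial fact that $u(r,B) \leq u(p,B)$ and $v(-\infty,B) \leq v(p,B)$ for suitable $p$ (natural inequalities), one bounds $(uv)(s,B)$ by a constant times $(uv)(r,B)$. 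The only care needed is to keep the exponents $r<0<s$ straight so that the inequalities point the correct way; the constant comes out as $[u]_{RC(r,\infty)}[v]_{RC(-\infty,s)}$.

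The substantive direction is ``only if''. The natural candidates, dictated by Figure \ref{product}, are obtained by slicing the arrow from $r$ to $s$ at the endpoints $\pm\infty$: one wants $u$ to carry the part of $w$ that is ``controlled from above by its $r$-mean'' and $v$ the part ``controlled from below by its $s$-mean''. The cleanest approach I would take is to reduce to the classical $A_p$ factorization theorem (\thmref{apfactorization}) via the scaling property. Given $w \in RC(r,s)$ with $r<0<s$, choose $\theta < 0$ so that one of the exponents lands at $1$: for instance scale by $\theta = 1/s > 0$ — no, $1/s>0$ sends $r/\,\cdot$ and $s/\,\cdot$ but keeps the sign pattern. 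Better: first use the shrinking property is \emph{not} available here since we need to move an endpoint to $1$, which is a stretch. Instead, scale $w$ by $\theta$ chosen so that $s/\theta$ or $r/\theta$ equals $1$; taking $\theta = s$ gives $w^s \in RC(r/s, 1, [w]^s)$ with $r/s < 0$, i.e.\ $w^s \in A_p$ with $\frac{1}{1-p} = \frac{r}{s}$, so $p = 1 - s/r > 1$. Now apply \thmref{apfactorization}: $w^s = w_1 w_2^{1-p}$ with $w_1, w_2 \in A_1 = RC(-\infty,1)$. Taking $s$-th roots, $w = w_1^{1/s} w_2^{(1-p)/s}$. Set $u := w_2^{(1-p)/s}$ and $v := w_1^{1/s}$. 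By the scaling property, $v = w_1^{1/s} \in RC(-\infty \cdot s, 1 \cdot s) = RC(-\infty, s)$ since $1/s > 0$ — good, $v \in RC(-\infty,s)$. For $u$: $w_2 \in RC(-\infty,1)$ and $(1-p)/s < 0$, so by \thmref{rcmain}(ii), $w_2^{(1-p)/s} \in RC\big(\frac{1}{(1-p)/s}, \frac{-\infty}{(1-p)/s}\big) = RC\big(\frac{s}{1-p}, \infty\big)$; and $\frac{s}{1-p} = \frac{s}{-s/r} = -r \cdot \tfrac{s}{s}$... let me recompute: $1-p = s/r$, so $\frac{s}{1-p} = \frac{s}{s/r} = r$. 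Hence $u \in RC(r,\infty)$, exactly as needed.

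The main obstacle, and the place where I would spend the most care, is bookkeeping the signs and the direction of all the scalings so that the endpoints $r, s, \pm\infty$ land where Figure \ref{product} says they should — a single sign error collapses the argument. A secondary point worth a sentence is the quantitative statement: tracking constants through \thmref{rcmain} and \thmref{apfactorization} gives explicit control on $[u]_{RC(r,\infty)}$ and $[v]_{RC(-\infty,s)}$ in terms of $[w]_{RC(r,s)}$ (and the geometric constants entering the $A_p$ factorization theorem), consistent with Remark \ref{rem:qq}. Alternatively, if one prefers a self-contained argument not invoking Jones's theorem, one can mimic its proof directly at the level of reverse classes using an iteration of the Hardy--Littlewood maximal operator adapted to the measure $w^r\,d\mu$; but given that \thmref{apfactorization} is already quoted in the paper, the reduction above is the shortest route and keeps the exposition within the ``visual formalism'' spirit.
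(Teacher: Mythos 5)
Your proposal follows essentially the same route as the paper: the ``if'' part via the pointwise bounds $u \le \esssup_B u \le C\,u(r,B)$ and $v(s,B) \le C\,\essinf_B v$ multiplied together, and the ``only if'' part by scaling $w$ by $\theta = s$ to land in $A_p$ with $p = 1 - s/r$, invoking the Jones factorization, taking $s$-th roots, and rescaling the two $A_1$ factors back to $RC(r,\infty)$ and $RC(-\infty,s)$. Apart from a couple of harmless slips in the ``if'' sketch (the irrelevant mention of scaling by $\theta = 1/2$, and the pointwise claim $v \le v(s,B)$, which should just be $v(s,B) \le C\, v(y)$ a.e.), the argument and the constants match the paper's proof.
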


\begin{proof} For the `if' part, we take $u\in RC(r,\infty,C_2)$
and $v\in RC(-\infty,s,C_1)$ and we will prove $uv\in
RC(r,s,C_1 C_2)$, that is, 
\be\label{mtf1}
(uv)(s,B) \leq C_1 C_2 \, (uv)(r,B)\quad \forall B \in \B.
 \ee
Using the assumptions $u\in RC(r,\infty,C_2)$ and $v\in
RC(-\infty,s,C_1)$, given $B \in \B$ we have 
\begin{equation}\label{mtf2}
(uv)(s,B) \leq ( \esssup_B u) v(s,B) \leq C_2 \,  u(r,B) v(s,B)
\end{equation}
and
\begin{equation}\label{mtf3} 
(uv)(r,B)^{-1} \leq (\essinf_B v)^{-1} u(r,B)^{-1}\leq C_1 \, v(s,B)^{-1} u(r,B)^{-1}.
\end{equation}
Hence, the estimate \eqref{mtf1} follows by multiplying \eqref{mtf2} and \eqref{mtf3}. 

For the `only if' part, we take $w\in RC(r,s)$ and need to
show that there exist $u\in RC(r,\infty)$ and $v\in
RC(-\infty,s)$ such that $w=uv$. We know that $w \in RC(r,s)$, then, by  \thmref{rcmain} (scaling), we have $w^s \in A_p$, with  $p:=(1-\frac{s}{r}) > 1$. Then, by Theorem \ref{apfactorization} (factorization), there exist
$\tilde{u}, \tilde{v}\in A_1$ such that $w^s =
\tilde{u}\tilde{v}^{1-p}$. Defining $u:=\tilde{u}^{\frac{1}{s}}$ and
$v:=\tilde{v}^{\frac{1-p}{s}}$, we get $w^s=uv$. Now, since
$\tilde{u},\tilde{v}\in A_1= RC(-\infty,1)$ and $s > 0$, by \thmref{rcmain} (scaling), we have that $u:=\tilde{u}^{\frac{1}{s}}\in RC(-\infty,s)$. On the other hand, since $r=\frac{s}{1-p}$, from the definition of $p$, by scaling we have that $v:=\tilde{v}^{\frac{1-p}{s}}\in
RC(\frac{s}{1-p},\infty)=RC(r,\infty).$ 
\end{proof}

\begin{cor}{\rm{(See \cite[Section III]{CR80})}}\label{bmoblo} Every $BMO$ function is the difference of two $BLO$ functions, that is,
$$
BMO=BLO-BLO.
$$
\end{cor}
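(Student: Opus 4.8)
The plan is to translate the statement $BMO = BLO - BLO$ into the language of reverse classes via the characterizations already established, and then to invoke the factorization theorem for reverse classes crossing zero (Theorem \ref{factorization}) together with the characterizations of $BLO$ and $BUO$ from Corollary \ref{characbmoblo}. First I would note that one inclusion is essentially trivial: if $f = g - h$ with $g, h \in BLO$, then since $BLO \subset BMO$ and $BMO$ is a vector space, $f \in BMO$; so the content is in showing every $BMO$ function is such a difference.

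For the nontrivial inclusion, start with $f \in BMO$ and set $w := e^f$, so that by Corollary \ref{characbmoblo} (\ref{charbmo}) we have $w \in RC(r,s)$ for some $-\infty \leq r < s \leq \infty$. The key move is to arrange for the reverse class to cross zero: by the shrinking property (Lemma \ref{lemmashrink}) we may replace $r$ by $\min\{r, -1\}$ and $s$ by $\max\{s, 1\}$ if necessary, so without loss of generality $r < 0 < s$ (one should check that shrinking keeps us inside a reverse class with the exponents in the right order; if $r$ and $s$ were both positive or both negative, shrinking alone does not cross zero, so instead one first self-improves or uses that $\log w \in BMO$ gives $w^\alpha \in A_p = RC(1/(1-p),1)$, which already crosses zero, and then scales — this is the same maneuver used in Figure \ref{fig:bmorspositive}). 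Once $w \in RC(r,s)$ with $r < 0 < s$, Theorem \ref{factorization} gives $w = uv$ with $u \in RC(r,\infty)$ and $v \in RC(-\infty,s)$. Then by Corollary \ref{characbmoblo} (\ref{charbuo}), $\log u \in BUO$, and by Corollary \ref{characbmoblo} (\ref{charblo}), $\log v \in BLO$. Writing $f = \log w = \log u + \log v$, we have expressed $f$ as a sum of a $BUO$ function and a $BLO$ function.

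The final step is to reconcile $BUO + BLO$ with $BLO - BLO$: since $\phi \in BUO$ if and only if $-\phi \in BLO$ (immediate from the defining inequalities in Definition \ref{bmo}, as $\esssup_B(-\phi) = -\essinf_B \phi$), we can write $\log u = -(-\log u)$ with $-\log u \in BLO$, so that $f = \log v - (-\log u)$ is a difference of two $BLO$ functions, as desired.

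The main obstacle I anticipate is the bookkeeping to guarantee the reverse class for $w = e^f$ actually straddles zero so that Theorem \ref{factorization} applies. The cleanest route is probably to bypass Corollary \ref{characbmoblo} (\ref{charbmo}) and go directly through Proposition \ref{bmobloap} (\ref{bmoap}): $f \in BMO$ gives $w^\alpha \in A_p = RC((1-p)^{-1},1)$ for some $\alpha \in \R$ and $1 < p < \infty$, an interval that already contains $0$; apply Theorem \ref{factorization} to $w^\alpha$ to split $\alpha f = \log(w^\alpha)$ as $BUO + BLO$, then divide by $\alpha$ (both $BUO$ and $BLO$ are stable under positive scalar multiples, and under negative ones they swap, which only helps), and finish as above. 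Everything else is routine manipulation of the sup-over-balls definitions.
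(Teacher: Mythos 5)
Your argument is correct and takes essentially the same route as the paper: translate $BMO$ and $BLO$ into reverse classes via Corollary \ref{characbmoblo}, arrange the class for $e^{f}$ to cross zero, and apply Theorem \ref{factorization}. The only cosmetic differences are that you pass through $BUO$ and the identity $BUO=-BLO$ where the paper instead scales $u\in RC(r,\infty)$ by $-1$ to get $u^{-1}\in RC(-\infty,-r)$ directly, and that you spell out the zero-crossing step, which the paper leaves implicit (cf.\ Exercise \ref{RCcrosszero}).
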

\begin{proof}
In light of \corref{characbmoblo} \eqref{charbmo} and \eqref{charblo}, we only need to show that $w\in
RC(r,s)$, for some  $r<0<s$, if and only if $w=w_1w_2^{-1}$ for some
$w_1\in RC(-\infty,r_1)$ and $w_2\in RC(-\infty,r_2)$, with  $r_1, r_2>0$. Let
$r<0<s$ and $w\in RC(r,s)$. Then, take $w_1\in RC(-\infty,s)$ and
$w_2\in RC(-\infty,-r)$ provided by \thmref{factorization} to get
$w=w_1w_2^{-1}$. Conversely, let $w_1\in RC(-\infty,r_1)$ and
$w_2\in RC(-\infty,r_2)$, for some $r_1, r_2>0$. Finally, since $-r_2<0<r_1$, by \thmref{factorization} we have
$w_1w_2^{-1}\in RC(-r_2,r_1)$.
\end{proof}

\subsection{The visual formalism as a tool for generating conjectures}  Figure \ref{product}  motivates the following question:

\begin{figure}[H] 
   \centering
   \includegraphics[width=4.7in]{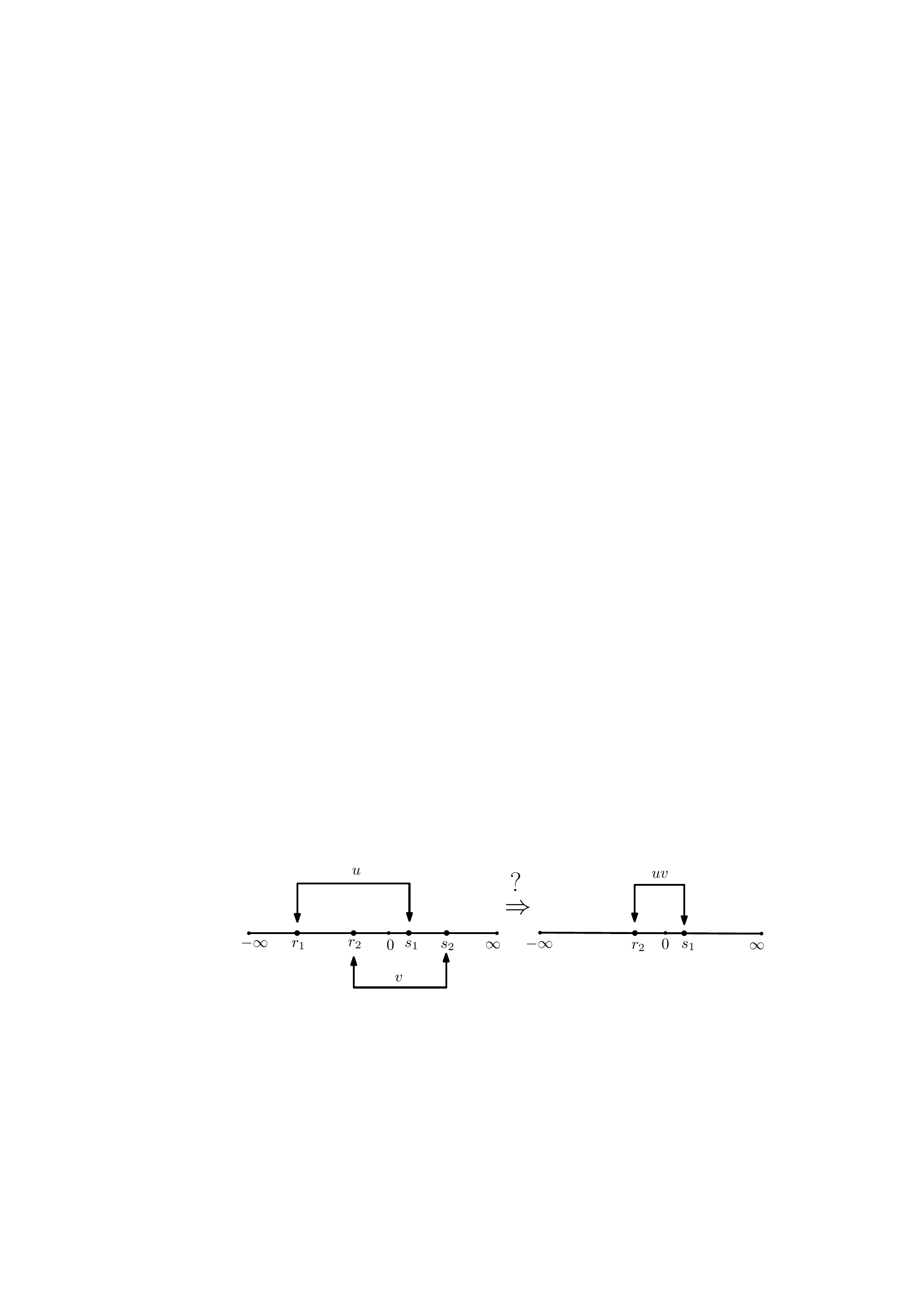}
   \caption{Is the implication true? Does the product honor the reversal property in the intersection of the reversal indices of the factors? It turns out that the answer is no, unless $r_1 = -\infty$ and $s_2 = \infty$ (and then Theorem \ref{factorization} applies).}
   \label{fig:questionproduct}
\end{figure}

Next, we will see that in order for the implication in Figure \ref{fig:questionproduct} to hold true it is necessary that both factors touch opposite infinities. This fact will be substantiated by Examples \ref{0infty} and \ref{1infty} below. These examples are constructed
with the help of the following fact:
$|x|^a\in A_p(\Rn)$ if and only if $-n<a<n(p-1)$, see \cite[p.286]{Grafakos}; visually, this fact is represented in Figure \ref{fig:powersinap}.

\begin{figure}[H] 
   \centering
   \includegraphics[width=4in]{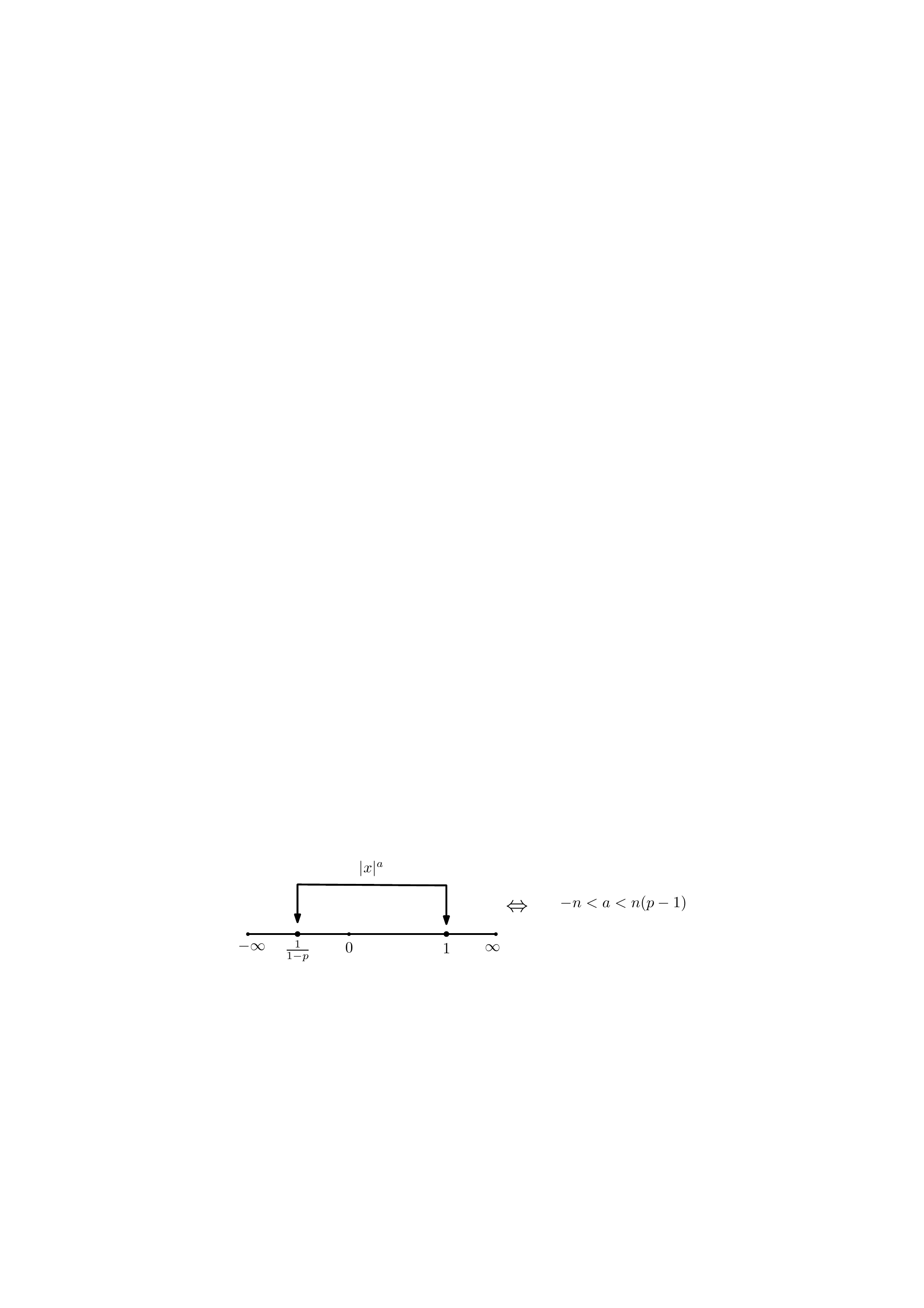}
   \caption{A necessary and sufficient condition for the power weight $|x|^a$ to be in $A_p(\rn)$.}
   \label{fig:powersinap}
\end{figure}

\begin{example}\label{0infty} Take $\Omega = \R$. In this example the reverse classes of both the factors do not touch
an infinity and we will see that
the product does not lie in the intersecting reverse class. Take
$u:=|x|\in A_3=RC(-\frac{1}{2},1)$ and $v:=|x|^3\in
A_5=RC(-\frac{1}{4},1)$. Clearly, we have
\[RC\left(-\frac{1}{2},1\right)\cap RC\left(-\frac{1}{4},1\right)=RC\left(-\frac{1}{4},1\right).\]
But, $uv=|x|^4\notin  A_5$. This example is illustrated in
\figref{noinftyproduct}.

\begin{figure}[H] 
   \centering
   \includegraphics[width=5in]{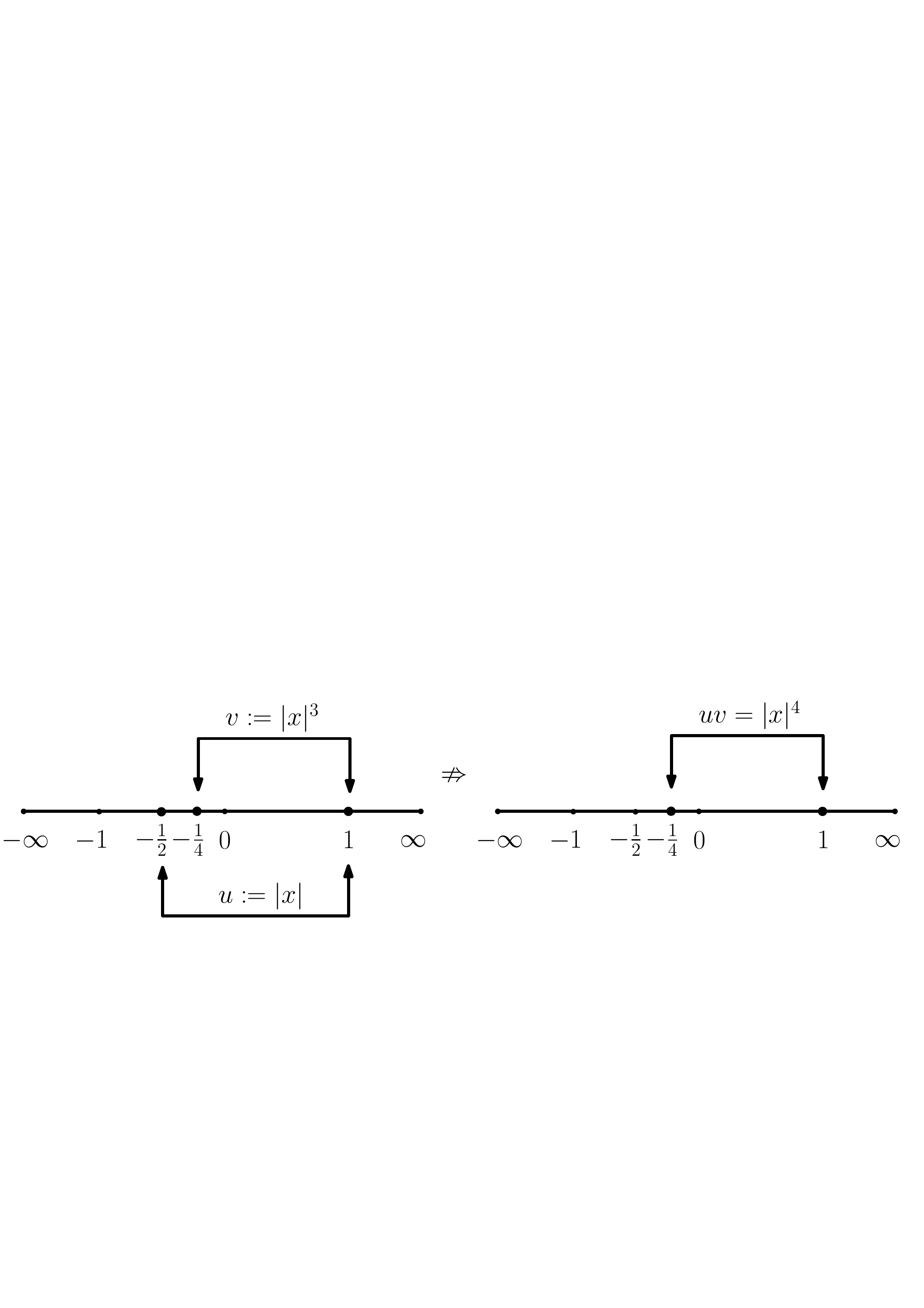}
  \caption{The product will not be in the intersecting reverse class if the reverse classes for
   both factors do not touch an infinity.}
   \label{noinftyproduct}
\end{figure}

\end{example}

\begin{example}\label{1infty} Take $\Omega = \R$. 
In the following example, illustrated in \figref{oneinftyproduct},
only one of the factors touches an infinity and we will see that this is not sufficient for the product to lie in
the intersecting reverse class. Take $\eps_0,\eps_1>0$ so that
$0<(\eps_0+\eps_1)\ll 1$. Then, since $-1<-1+\eps_0<0$,
$|x|^{-1+\eps_0}\in A_1=RC(-\infty,1)$. Equivalently,
$u:=|x|^{1-\eps_0}\in RC(-1,\infty)$. Again, since
$-1<\eps_0+\frac{\eps_1}{2}<\eps_0+\eps_1$,
$v:=|x|^{\eps_0+\frac{\eps_1}{2}}\in
A_{1+\eps_0+\eps_1}(\R)=RC(\frac{1}{-(\eps_0+\eps_1)},1)$. Now,
since $\frac{1}{-(\eps_0+\eps_1)}\ll-1$,
$RC(\frac{1}{-(\eps_0+\eps_1)},1)\subset\subset RC(-1,1)$. Hence,
$v:=|x|^{\eps_0+\frac{\eps_1}{2}}\in RC(-1,1)=A_2$. Clearly,
\[RC(-1,\infty)\cap RC(-1,1)=RC(-1,1).\] But,
$uv=|x|^{1+\frac{\eps_1}{2}}\notin A_2$ since
$1+\frac{\eps_1}{2}>1$.

\begin{figure}[H]
   \centering
   \includegraphics[width=5in]{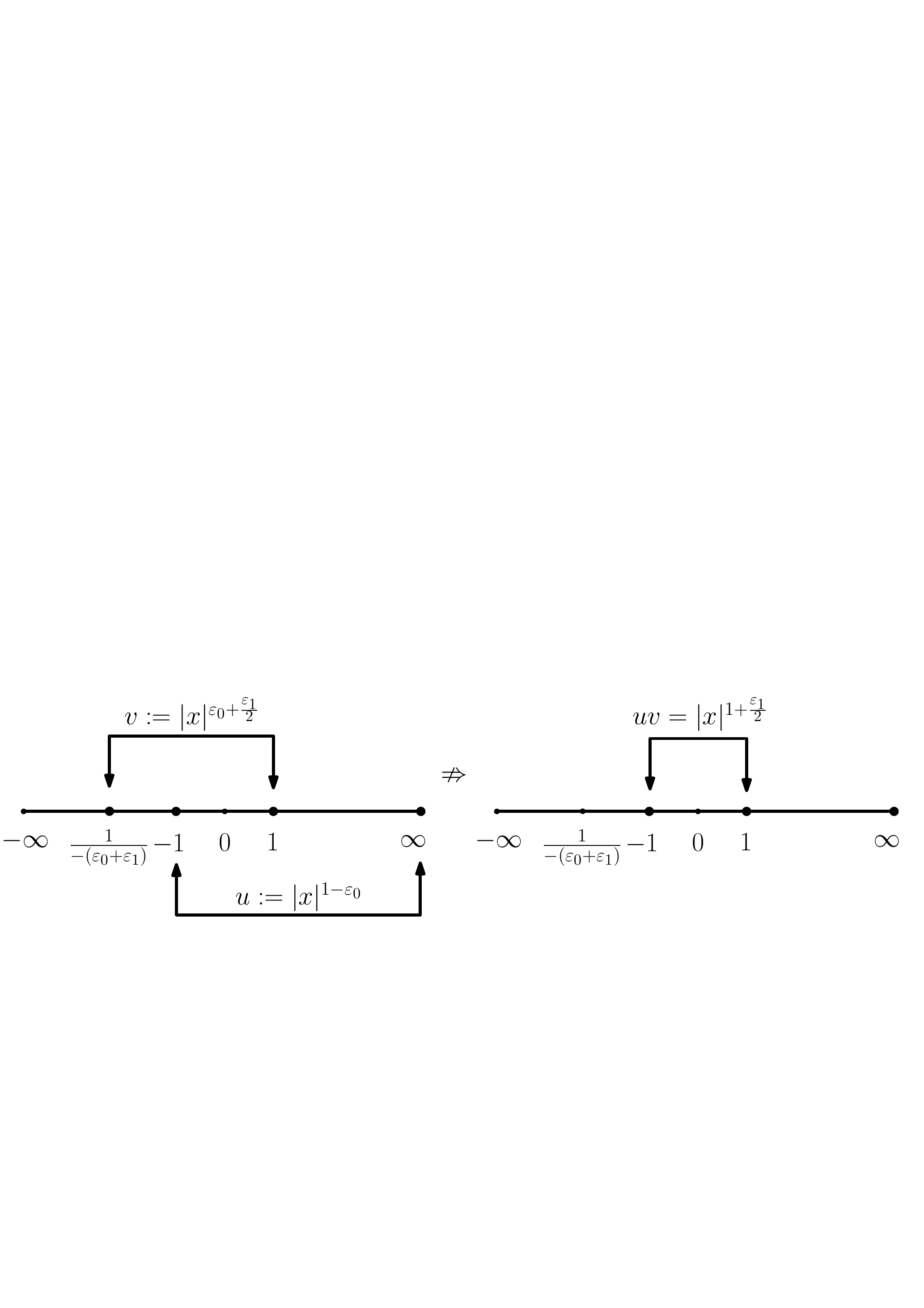}
   \caption{The product will not be in the intersecting reverse class if the reverse class of
   one of the factors fails to touch an infinity.}
      \label{oneinftyproduct}
\end{figure}
\end{example}

\section{Using the visual formalism to illustrate proofs of Harnack's inequality}\label{secmoser}

Fix an open set $\Omega \subset X$. A weight $w$ is said to satisfy Harnack's inequality, with constant $C_H \geq 1$, in $\Omega$ if $w \in RC(-\infty, \infty, C_H)$, that is,
$$
\esssup\limits_{B} w \leq C_H \essinf\limits_{B} w, \quad \forall B \in \B.
$$
That is, $w$ satisfies the most extreme reversal inequality. Visually,

\begin{figure}[H]
   \centering
   \includegraphics[width=3in]{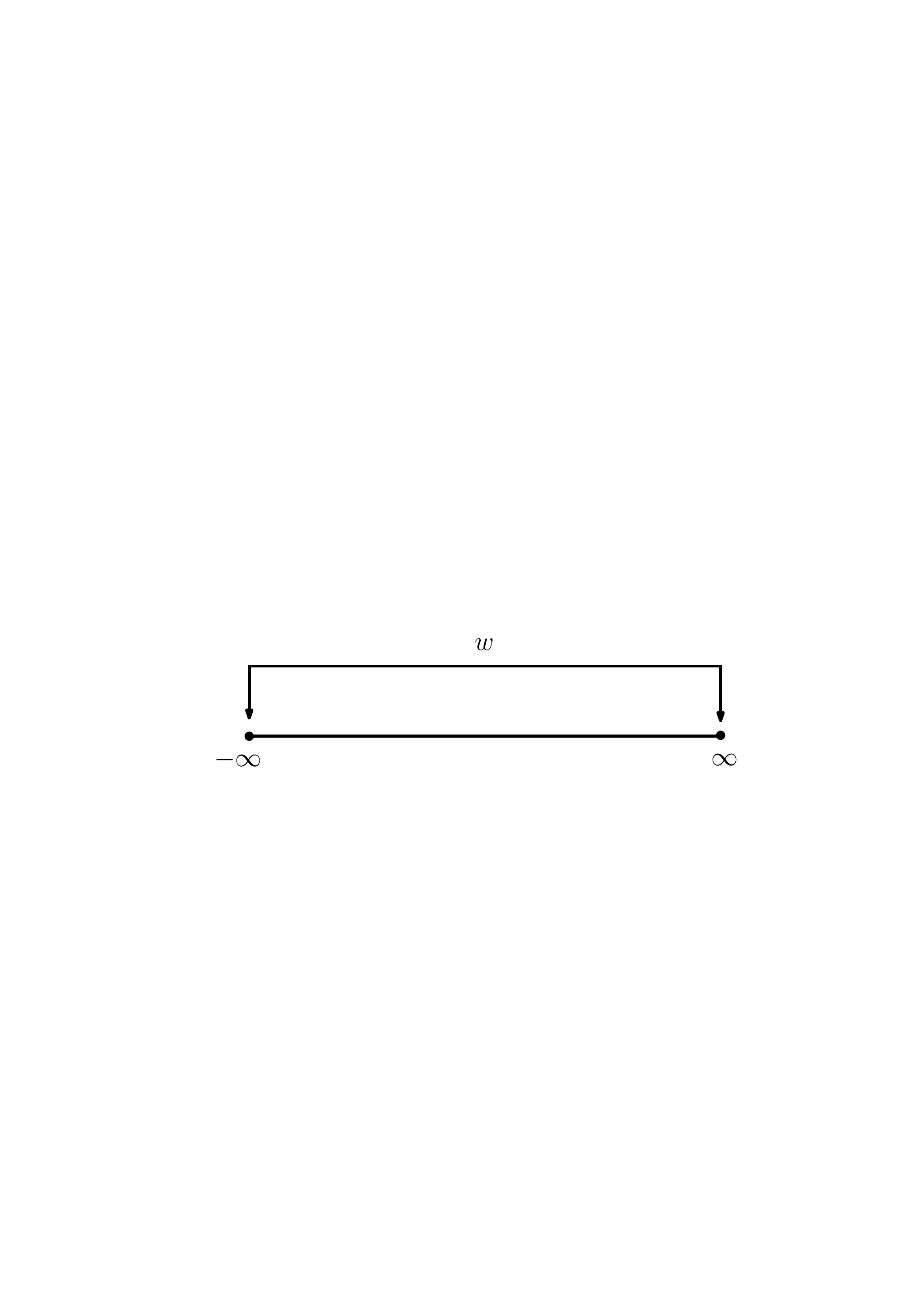}
   \caption{The Harnack class $RC(-\infty, \infty)$.}
      \label{fig:harnackclass}
\end{figure}

In this section we use the visual formalism to provide a brief description of Moser's and Krylov-Safonov's proofs of Harnack's inequality for positive solutions to elliptic PDEs. While Moser's method is most notable for his ingenious
iteration scheme \cite{moserE} in the context of divergence-form operators,
the Krylov-Safonov's method, based on innovative probabilistic tools
\cite{KS79, KS80}, was developed in the context of
non-divergence-form operators. Both of these models stand as
cornerstones in the study of regularity properties of solutions to
PDEs and are flexible enough to be carried out in more general types
of doubling quasi-metric spaces possessing suitable additional
structure (e.g., carrying Sobolev or Poincar\'e-type inequalities). In what follows the underlying space of homogeneous type is Euclidean space $\rn$ with Lebesgue measure (the latter indicated by $| \cdot |$).

\subsection{Moser's iterations and Harnack's inequality}
Let $\Omega \subset \rn$ be an open bounded set and for each $x \in \Omega$ let $A(x)$ be an $n \times n$ symmetric matrix verifying the uniform ellipticity condition
\begin{equation}\label{Aellip}
\Lambda_1 |\xi|^2 \leq \langle A(x)\xi, \xi \rangle \leq \Lambda_2 |\xi|^2, \quad \forall x \in \Omega, \xi \in \rn,
\end{equation}
for some constants $0 < \Lambda_1 \leq \Lambda_2$. Harnack's inequality for positive solutions to the divergence-form
elliptic equation
$$\mathcal{L} u:=\ds\sum_{i,j=1}^n(a_{ij}(x)u_i)_j=\div(A(x)\nabla
u)=0 \text{ in } \Omega\subset\Rn$$ 
was established by J. Moser in
\cite{moserE}. Moreover, Moser showed that the Harnack constant $C_H$ depends only on dimension $n$ and the ratio $\Lambda_2/\Lambda_1$.

The first step in his approach is based on an interaction between a Sobolev inequality and an energy estimate (Caccioppoli's inequality) to show that any positive subsolution $u$ (i.e., $\mathcal{L}u \geq 0$ in $\Omega$) satisfies $u \in RC^{weak}(2, 2\rho)$, where $\rho :=2n/(n-2) > 1$. More precisely,
\begin{equation}\label{MoStep1}
\left(\frac{1}{|B|}\int_B u^{2 \rho}\, dx \right)^{\frac{1}{2\rho}} \leq C(n,\Lambda_2/\Lambda_1) \left( \frac{1}{|B|} \int_{2B} u^{2}\, dx\right)^\frac{1}{2}, \quad \forall B \in \B.
\end{equation}
Then, by means of a finely tuned iterative procedure, illustrated in Figure \ref{moserfig:step1},
\begin{figure}[H] 
   \centering
   \includegraphics[width=3.5in]{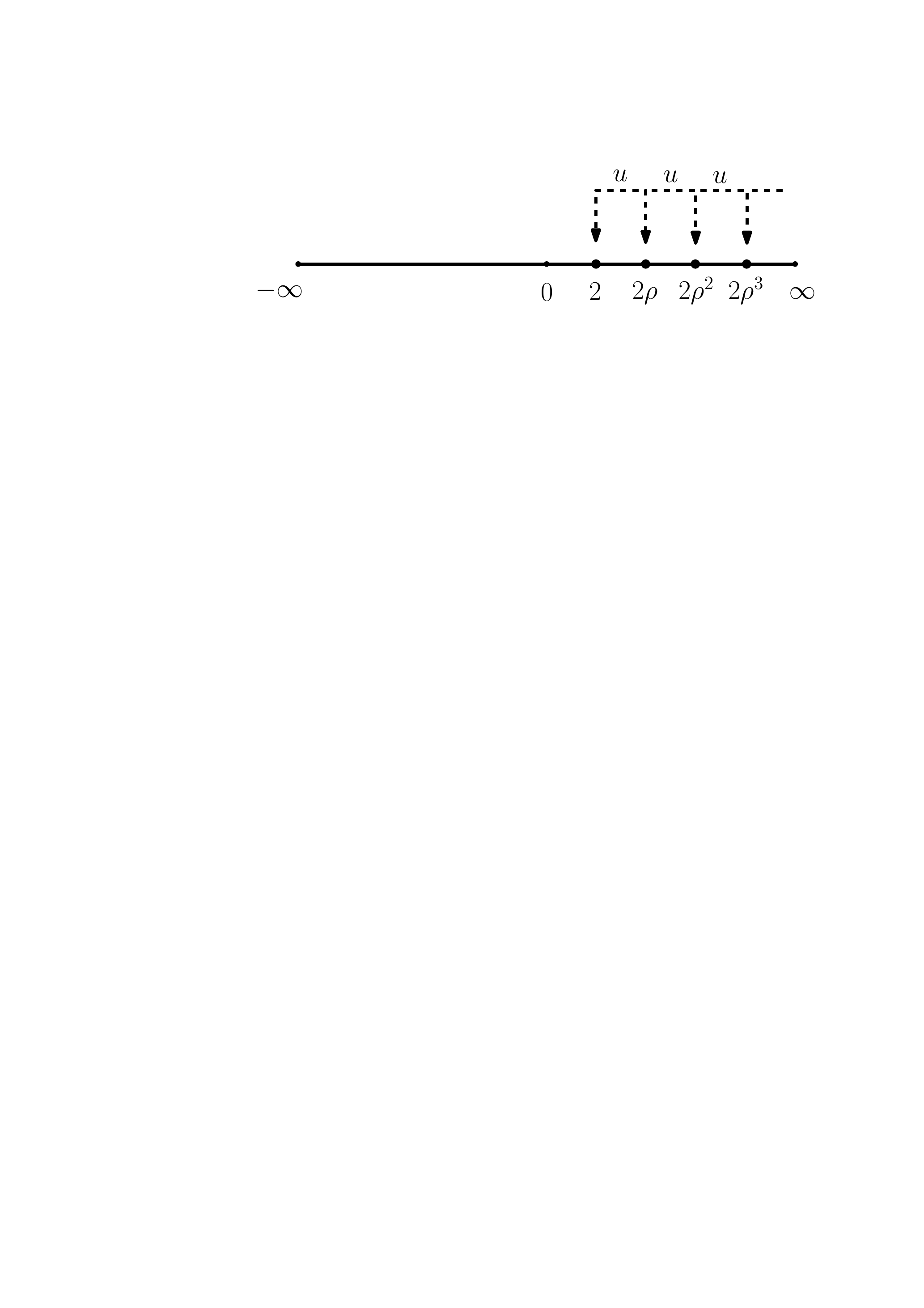}
   \caption{Moser iterations for a positive subsolution $u$.}
   \label{moserfig:step1}
\end{figure}
Moser improved \eqref{MoStep1} to obtain $u \in RC^{weak}(2,\infty)$ as in Figure \ref{moserfig:step2}.
\begin{figure}[H] 
   \centering
   \includegraphics[width=3.5in]{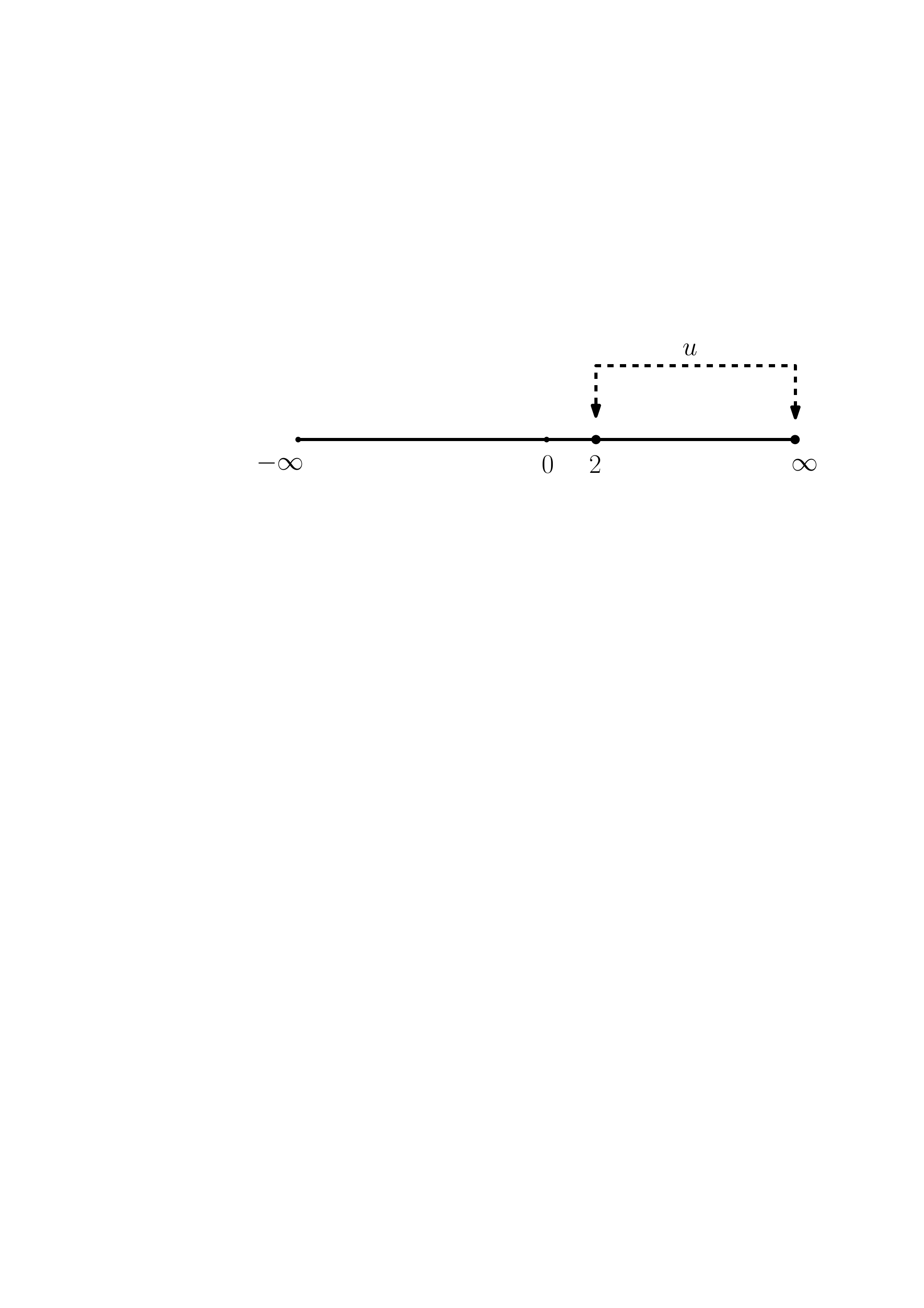}
   \caption{Concatenation and Moser's iterations imply that every positive subsolution $u$ belongs to $RC^{weak}(2,\infty)$.}
   \label{moserfig:step2}
\end{figure}
Consequently, by the self-improving properties for the reverse weak classes (see Figure \ref{fig:selfimproveWRH}), we have $u \in RC^{weak}(p,\infty)$, for every $p > 0$, that is,
\begin{equation}\label{moserite}
\esssup_B u \leq C(p, n, \Lambda_2/\Lambda_1) \left(\frac{1}{|B|}\int_{2B} u^p \, dx\right)^{\frac{1}{p}}, \quad \forall B \in \B.
\end{equation}
as illustrated in Figure \ref{moserfig:possubrc}.
\begin{figure}[H] 
   \centering
   \includegraphics[width=3.5in]{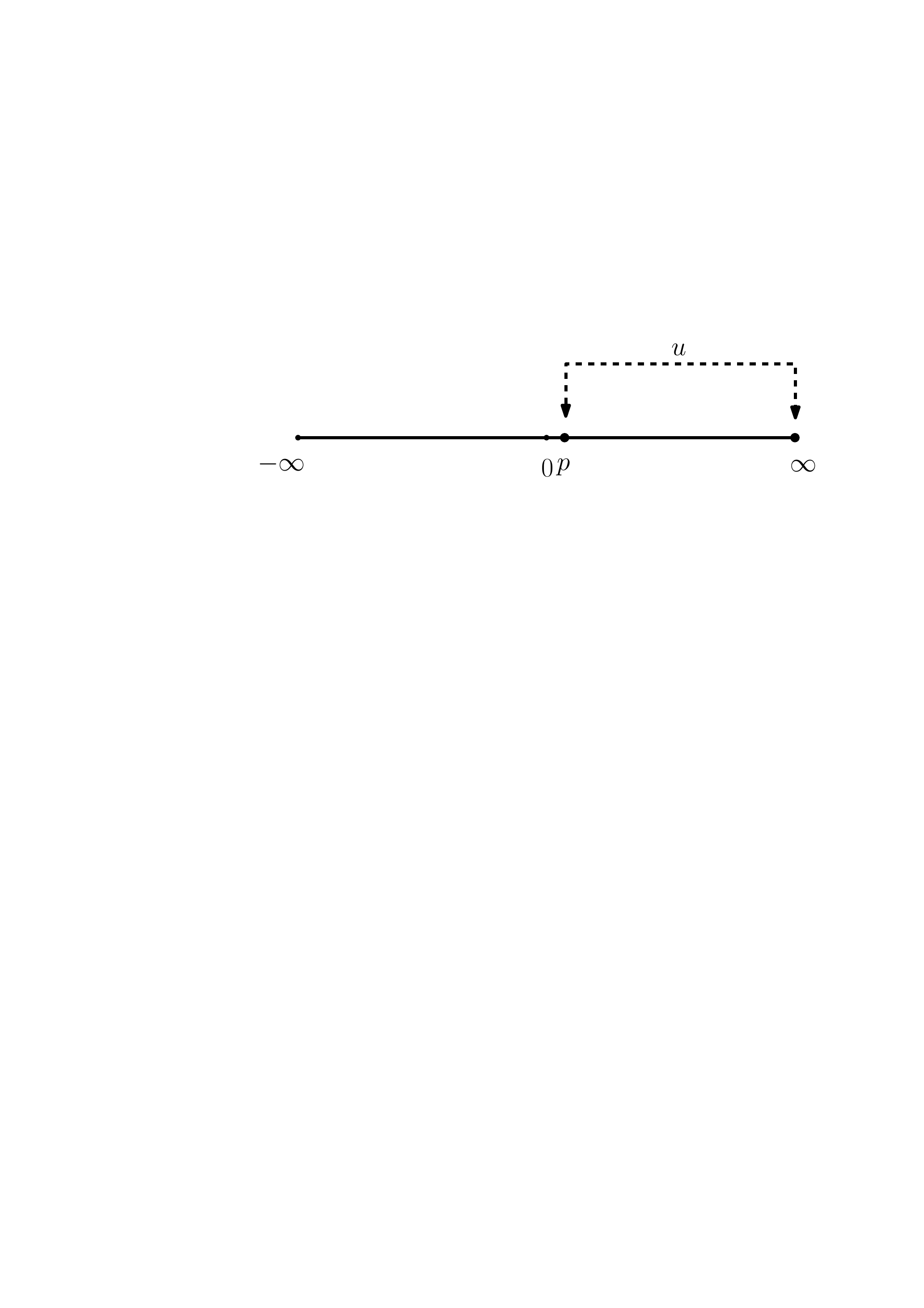}
   \caption{Every positive subsolution $u$ belongs to $RC^{weak}(p,\infty)$ for all $p > 0$.}
   \label{moserfig:possubrc}
\end{figure}
Inequality \eqref{moserite} is usually referred to as the \emph{local boundedness property} for $u$ and it can also be proved by means of De Giorgi's truncations. For both methods, Moser's iterations and De Giorgi's truncations, see for instance  \cite[Section 4.2]{HL}.

Now, if $u$ is a positive solution, then both $u$ and $u^{-1}$ are positive subsolutions and \eqref{moserite} applied to them yields  $u \in RC^{weak}(p,\infty)$ and $u^{-1} \in RC^{weak}(p,\infty)$.  But, by the scaling property with $\theta =-1$, the latter means $u \in RC^{weak}(-\infty,-p)$, that is, for all $p > 0$, 
\begin{equation}\label{moserite2}
\left(\frac{1}{|B|}\int_{2B} u^{-p} \, dx\right)^{-\frac{1}{p}} \leq C(p, n, \Lambda_2/\Lambda_1) \essinf_B u  \quad \forall B \in \B.
\end{equation}
Hence, positive solutions satisfy both \eqref{moserite} and \eqref{moserite2}, this is illustrated in Figure \ref{moserfig:step4}.
\begin{figure}[H] 
   \centering
   \includegraphics[width=3.5in]{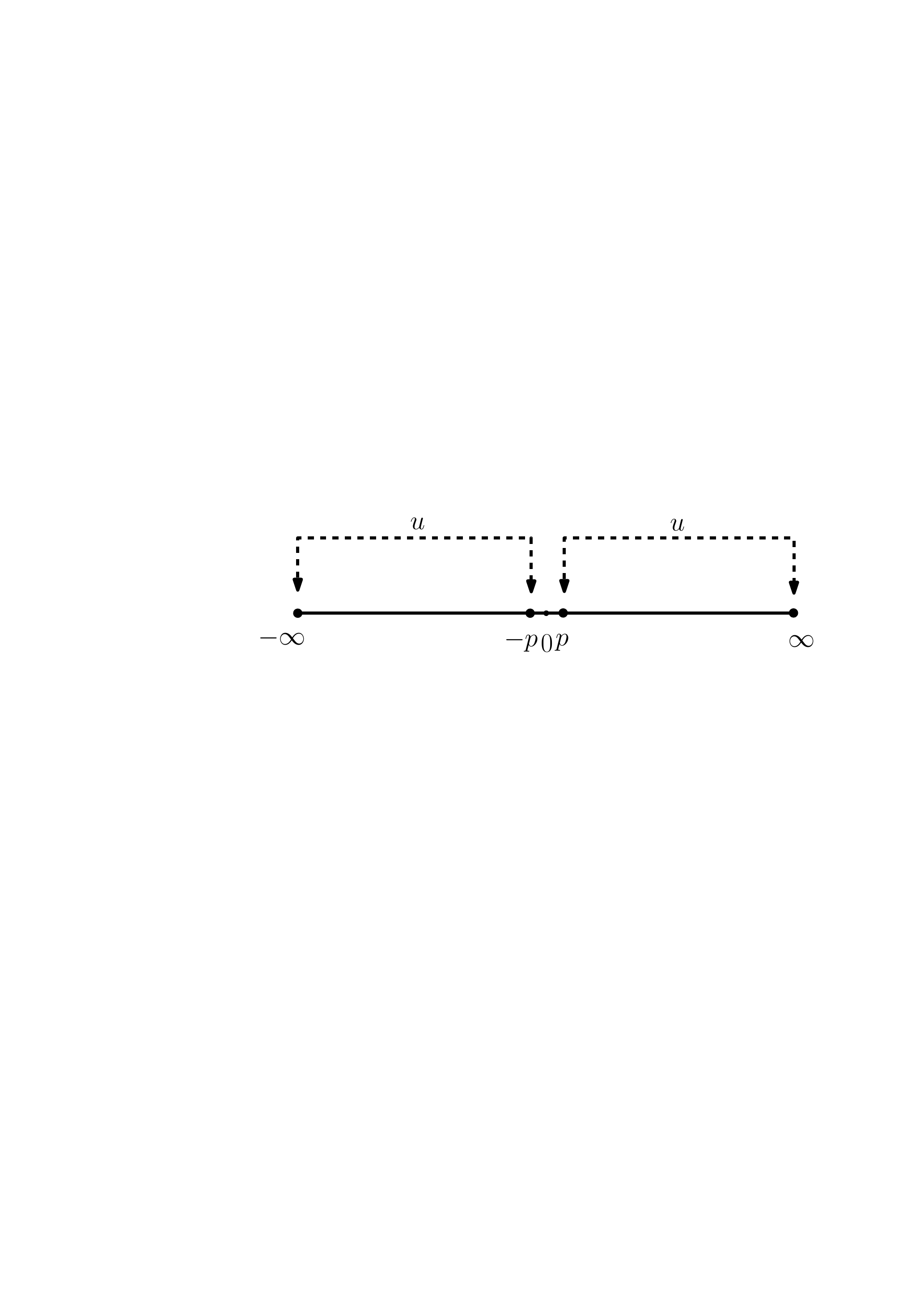}
   \caption{Every positive solution $u$ belongs to $RC^{weak}(-\infty,-p) \cap RC^{weak}(p, \infty)$ for all $p > 0$.  }
   \label{moserfig:step4}
\end{figure}
The next step in Moser's Harnack inequality consists in establishing the existence of $p_0 > 0$ such that if $u$ is a positive supersolution (i.e., $\mathcal{L}u \leq 0$ in $\Omega$), then
$u^{p_0} \in A_2(\Omega)$. This is illustrated  in
\figref{moserfig:step5}.
\begin{figure}[H] 
   \centering
   \includegraphics[width=3.5in]{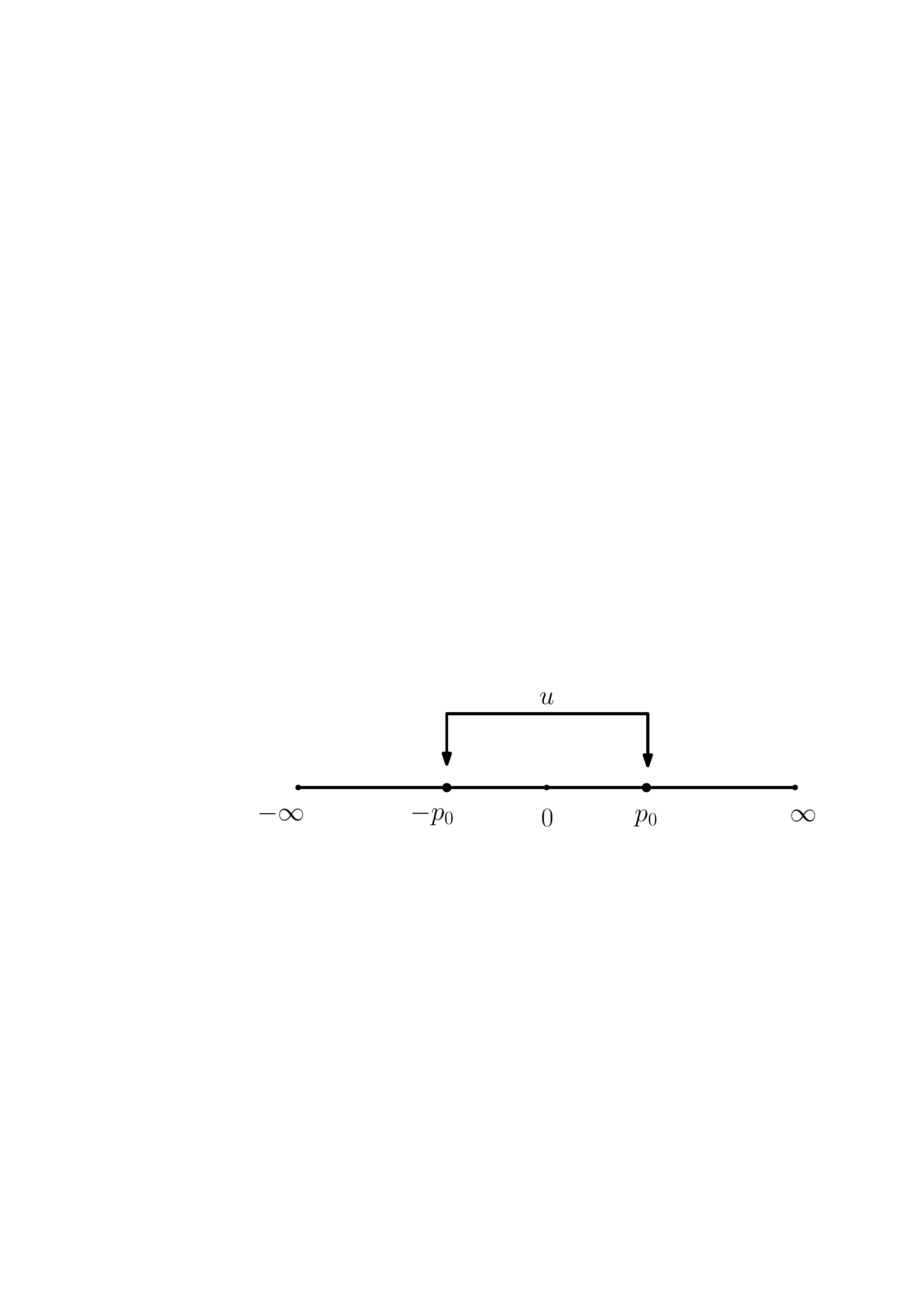}
   \caption{There exists a structural $p_0 > 0$ such that every positive supersolution $u$ (i.e., $\mathcal{L}u \leq 0$) satisfies $u^{p_0} \in A_2$. By the scaling property, this is equivalent to $u \in RC(-p_0, p_0)$.}
   \label{moserfig:step5}
\end{figure}
The step in \figref{moserfig:step5} is typically accomplished by using a Poincar\'e inequality and an energy estimate for $\log u$ to obtain that $\log u \in BMO(\Omega)$. Then, by Corollary \ref{ainfa2} and the techniques in the proof of Corollary \ref{characbmoblo} (\ref{charbmo}) (see also Exercise \ref{RCcrosszero} in Section \ref{secc:ex}), it follows that there is a $p_0 >0$ such that $u^{p_0} \in A_2$.

Finally, by choosing the $p$ in Figure \ref{moserfig:step4} equal to the $p_0$ in  Figure \ref{moserfig:step5} and using the fact that $u^{p_0}$ is doubling, since it is $A_2$, so that dashed lines in Figure \ref{moserfig:step4} turn into solid ones (see Remark \ref{rem:dashedtosolid}), the concatenation property yields $u \in RC(-\infty, -p_0) \cap RC(-p_0, p_0) \cap RC(p_0, \infty) = RC(-\infty, \infty)$.

\subsection{Krylov-Safanov's approach to Harnack's inequality.} As before, let $A(x)$ be a uniformly elliptic matrix satisfying \eqref{Aellip}. During the 1980's, N. Krylov and M. Safonov took recourse to completely new
measure-theoretic tools \cite{KS79, KS80} in order to establish
Harnack's inequality for positive solutions to the non-divergence-form elliptic equation
\begin{equation}\label{nondivform}
L u:=\ds\sum_{i,j=1}^na_{ij}(x)u_{ij}=\tr(A(x)D^2 u)=0  \text{ in } \Omega\subset\Rn.
\end{equation}
Later on, L. Caffarelli's seminal work \cite{C89} on fully non-linear elliptic equations  (see also \cite[Section 4.2]{CaCab}) greatly enriched
and simplified Krylov-Safonov's theory rendering it quite flexible and still
manageable. This, in fact, paved the way for the axiomatizations of 
Krylov-Safonov's theory in doubling quasi-metric spaces carried out in \cite{aft2,
DGL, harnack.maldonado}. All these axiomatic approaches involve, implicitly or explicitly, the
so-called critical density and power-like decay properties.

\begin{dfn} Let $\K_\Omega$ denote a family of non-negative
measurable functions with domain contained in $\Omega$. If $u\in
\K_\Omega$ and $A\subset \text{dom}(u)$ then we write $u\in
\K_\Omega (A),$ where dom$(u)$ stands for the domain of the function
$u$. Assume that $\K_\Omega$ is closed under multiplication by positive scalars. 

Let $0 < \eps < 1 \leq M$. $\K_\Omega$ is said to satisfy the \emph{critical density property} with constants $\eps$ and $M$  if for every $B_{2R}(x_0) \in \B$ and $u \in \K_\Omega(B_{2 R}(x_0))$ with 
\bee
\essinf\limits_{B_{R}(x_0)} u \leq 1, 
\eee
it follows that 
\bee 
\mu(\{x \in B_{2R}(x_0): u(x) > M\}) \leq \eps \mu(B_{2R}(x_0)).
\eee
Let $0 < \varrho < 1 < N$ . $\K_\Omega$ is said to
satisfy the \emph{power-like decay property} with constants $N$ and
$\varrho$ if for every $B_{2R}(x_0) \in \B$ and every $u\in \K_\Omega(B_{2 R}(x_0))$ with \bee
\essinf\limits_{B_{R}(x_0)} u \leq 1, 
\eee 
it follows that 
\bee 
\mu(\{x \in B_{R/2}(x_0): u(x) > N^k\}) \leq \varrho^k \mu(B_{R/2}(x_0))
\quad \forall k \in \na. 
\eee
\end{dfn}
In the context of the non-divergence form elliptic operators \eqref{nondivform}, the critical density property plays the role analogous to the Moser iterations in the divergence-form setting. Indeed, let  $\K_\Omega$ denote the class of supersolutions of \eqref{nondivform} (that is, $u \in \K_\Omega$ if and only if $Lu \leq 0$ in $\Omega$), as a first step it is proved that $\K_\Omega$ possesses the critical density property; see, for instance, Theorem 2.1.1 on page 31 of \cite{guti}. Then, it is proved that if for any given ball $B \in \B$ and any scalar $\lambda$ with $\lambda - u > 0$ in $B$ we have $\lambda - u \in \K_\Omega$, then $u \in RC^{weak}(p,\infty)$ for every $p > 0$. See, for instance \cite[Section 3]{harnack.maldonado}. Therefore, putting these two results together, if $u$ is a positive subsolution ($Lu \geq 0)$, then $L(\lambda - u) = - Lu \leq 0$, so that $\lambda - u \in \K_\Omega$ and, consequently, $u \in RC^{weak}(p,\infty)$ for every $p > 0$. This is illustrated in Figure \ref{moserfig:step3}.
\begin{figure}[H] 
   \centering
   \includegraphics[width=3.5in]{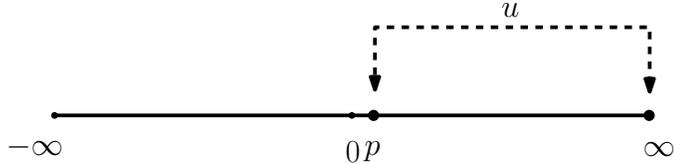}
   \caption{The critical density property implies that every positive  subsolution $u$ (i.e. $Lu \geq 0$) belongs to $RC^{weak}(p,\infty)$ for all $p > 0$.}
   \label{moserfig:step3}
\end{figure}
The next key step in Krylov-Safonov's approach consists in showing that  the class of positive supersolutions possesses the power-like decay property, see for instance Theorem 2.1.3 on page 36 of \cite{guti} and Lemma 4.6 on page 33 of \cite{CaCab}. Now, the fact that the class of supersolutions has the power-like decay  property (and since it is closed under multiplication by positive constants)  amounts to the existence of constants $0 < \delta < 1 \leq C$, depending only on  $\varrho$, $N$, $\Lambda_2/\Lambda_1$, and dimension $n$, such that every supersolution $u$ satisfies the reverse inequality $RC(-\infty,\delta, C)$ (see \cite[Remark~2]{harnack.maldonado}); namely,
\begin{equation}\label{weakH}
\frac{1}{|B|}\int_{B}u^\dl\,dx \leq C^\delta \essinf_{B} u^\dl \quad \forall B \in \B.
\end{equation}
In other words, $u^\delta \in A_1(\Omega)$. Inequality \eqref{weakH} is usually referred to as the \emph{weak Harnack inequality} for $u$ and it is here illustrated in \figref{fig:krylovsafonov}.
\begin{figure}[H]
\centering
\includegraphics[width=3.5in]{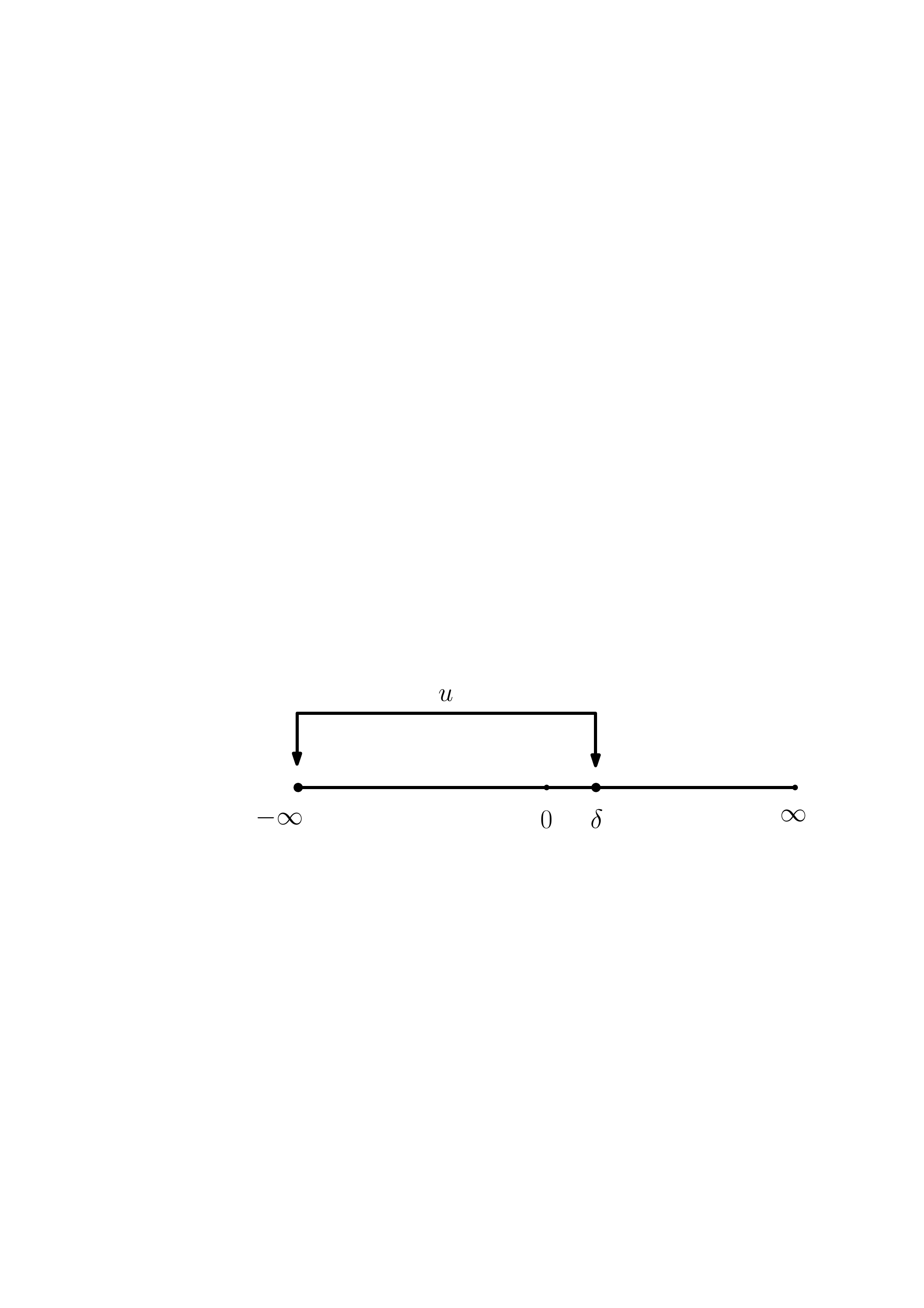}
\caption{There exists a structural $\delta > 0$ such that every positive supersolution $u$ (i.e., $Lu \leq 0$) satisfies $u^\delta \in A_1$. By the scaling property, this is equivalent to $u \in RC(-\infty, \delta)$.}
 \label{fig:krylovsafonov}
\end{figure}

Finally, if $u$ is a positive solution in $\Omega$, then it is both a subsolution and a supersolution and by choosing the $p$ in Figure \ref{moserfig:step3} equal to the $\delta$ in Figure \ref{fig:krylovsafonov} (and using the fact that $u^\delta$ is doubling, since it is $A_1$, turns the dashed arrow in Figure \ref{moserfig:step3} into a solid one), the concatenation property yields $u \in RC(-\infty,\delta) \cap RC(\delta, \infty) = RC(-\infty, \infty)$.

\section{Further practice}\label{secc:ex}

\begin{exe} Prove that the three axioms of the visual formalism for the reverse classes $RC(r,s)$ also apply to the weak reverse classes $RC^{weak}(r,s)$ and study the behavior of the weak-reversal constants.

\end{exe}

\begin{exe} Use the visual formalism to prove that if $w \in A_\infty$ and $w^r \in A_1$ for some $0 < r < \infty$, then $w \in A_1$. Estimate $[w]_{A_1}$ in terms of $[w]_{A_\infty}$ and $[w^r]_{A_1}$ by considering the cases $r > 1$ and $0 < r < 1$.

\end{exe}

\begin{exe} Fix $1 < s < \infty$. Use the visual formalism to prove that  $w^s \in A_\infty$ implies $w \in RH_s$ with $[w]_{RH_s} \leq [w^s]_{A_\infty}^{1/s}$.

\end{exe}

\begin{exe} Prove Theorem \ref{factorization} using the visual formalism.

\end{exe}

\begin{exe}\label{RCcrosszero} Use the techniques depicted in Figures \ref{fig:bmorspositive} and \ref{fig:blornegative} to prove that every weight satisfying a reverse inequality ``must cross the exponent zero''. That is, if $w \in RC(r,s)$ for some $-\infty \leq r < s < 0$, then $w \in RC(r,\eps)$ for some $\eps> 0$.  Similarly, if $w \in RC(r, s)$ for some $0< r < s \leq \infty$, then $w \in RC(-\eps, s)$ for some $\eps > 0$. Consequently, every reverse class $RC(r,s)$ self-improves ``to touch zero''. Meaning that whenever  $-\infty \leq r < s < 0$, then $RC(r,s) \subset RC(r,0)$. Similarly, if $0< r < s \leq \infty$, then  $RC(r,s) \subset RC(0,s)$.
\end{exe}

\begin{exe}\label{exe:charabuo}  Adapt the arguments in Figures \ref{fig:blorpositive} and \ref{fig:blornegative} to visually prove Corollary \ref{characbmoblo} \eqref{charbuo}.
\end{exe}

\begin{exe} Use Figures \ref{blo} and  \ref{product}  to visually prove Corollary \ref{bmoblo}. In the same vain, use Figures \ref{blo} and \ref{product} to visually prove that $BMO = BUO - BUO$.

\end{exe}

\section{Acknowledgements}

The authors would like to thank David Cruz-Uribe as well as the anonymous referees for their thorough reading of the manuscript and the numerous suggestions that improved its presentation.

\end{document}